\theoremstyle{plain}
    \newtheorem{thm}{Theorem}[section]
    \newtheorem*{thma}{Theorem A}
    \newtheorem*{thmb}{Theorem B}
    \newtheorem*{thmc}{Theorem C}
    \newtheorem{prop}[thm]{Proposition}
    \newtheorem{lemma}[thm]{Lemma}
    \newtheorem{cor}[thm]{Corollary}
    \newtheorem{fact}[thm]{Fact}
    \newtheorem{subsec}[thm]{}
\theoremstyle{definition}
    \newtheorem{defn}[thm]{Definition}
    \newtheorem{example}[thm]{Example}
    \newtheorem{notation}[thm]{Notation}
\theoremstyle{remark}
        \newtheorem{remark}[thm]{Remark}
	\newtheorem{ack}[thm]{Acknowledgements}
\newenvironment{myeq}[1][]
{\stepcounter{thm}\begin{equation}\tag{\thethm}{#1}}
{\end{equation}}
\newcommand{\mydiagram}[2][]
{\stepcounter{thm}\begin{equation}
     \tag{\thethm}{#1}\vcenter{\xymatrix{#2}}\end{equation}}
\newenvironment{mysubsection}[2][]
{\begin{subsec}\begin{upshape}\begin{bfseries}{#2.}
\end{bfseries}{#1}}
{\end{upshape}\end{subsec}}
\newenvironment{mysubsect}[2][]
{\vsn\quad \begin{subsec}\begin{upshape}\begin{bfseries}{{#2}\vsn.}
\end{bfseries}{#1}}
{\end{upshape}\end{subsec}}
\newcommand{\w}[2][ ]{\ \ensuremath{#2}{#1}\ }
\newcommand{\wref}[2][ ]{\ \eqref{#2}{#1}\ }
\newcommand{\xra}[1]{\xrightarrow{#1}}
\newcommand{\hra}{\hookrightarrow}
\newcommand{\hsm}{\hspace{2 mm}}
\newcommand{\vsn}{\vspace{0.5 mm}}
\newcommand{\vsm}{\vspace{3 mm}}
\newcommand{\rest}[1]{\lvert_{#1}}
\newcommand{\equaliz}{~\raisebox{-0.6ex}{$\stackrel{\textstyle 
            \longrightarrow}{\longrightarrow}$}~}
\newcommand{\good}[1][ ]{strongly directed{#1}}
\newcommand{\Arr}{\operatorname{Arr}}
\newcommand{\Cof}{\operatorname{Cof}}
\newcommand{\colim}{\operatorname{colim}}
\newcommand{\colimit}[1]{\raisebox{-0.25cm}{$\stackrel{\textstyle\colim}
{\mbox{\scriptsize{${#1}$}}}$}\,\vspace{0.4cm}}
\DeclareMathOperator{\Diag}{Diag}
\newcommand{\diag}[3]{\Diag_{#1}\left({#2},{#3}\right)}
\newcommand{\diagk}[4]{\Diag_{#1}^{#4}\left({#2},{#3}\right)}
\newcommand{\Fib}{\operatorname{Fib}}
\DeclareMathOperator{\holim}{holim}
\DeclareMathOperator{\hocolim}{hocolim}
\newcommand{\ho}{\operatorname{ho}}
\newcommand{\Hom}{\operatorname{Hom}}
\newcommand{\Id}{\operatorname{Id}}
\newcommand{\Ker}{\operatorname{Ker}}
\newcommand{\lof}[3]{L_{#1}\left({#2},{#3}\right)}
\newcommand{\hlof}[3]{\cL_{#1}\left({#2},{#3}\right)}
\DeclareMathOperator{\Map}{map}
\newcommand{\map}{\Map\,}
\DeclareMathOperator{\Nat}{Nat}
\newcommand{\nat}[3]{\Nat_{#1}({#2},{#3})}
\DeclareMathOperator{\Indec}{Indec}
\newcommand{\indec}[2][]{\Indec_{#1}({#2})}
\newcommand{\Obj}{\operatorname{Obj}}
\newcommand{\op}{\operatorname{op}}
\newcommand{\proj}{\operatorname{proj}}
\newcommand{\Trans}{\cD}
\newcommand{\trans}[3]{\Trans_{#1}({#2},{#3})}
\newcommand{\ctrans}[4]{\cE_{#1}^{#2}({#3},{#4})}
\newcommand{\Phiof}[2]{\Phi^{#1}_{#2}}
\newcommand{\Phis}[1]{\Phiof{}{#1}}
\newcommand{\psiof}[1]{\psi \left( #1 \right)}
\newcommand{\nuof}[1]{\nu\left({#1}\right)}
\newcommand{\sigof}[1]{\,\sigma\left({#1}\right)}
\newcommand{\phiof}[1]{\varphi_{#1}}
\newcommand{\mdiag}[1]{M(#1)}
\newcommand{\mf}{\mdiag{f}}
\newcommand{\mfd}{\mdiag{\fdot}}
\newcommand{\comp}[1]{c({#1})}
\newcommand{\fdot}{f_{\bullet}}
\newcommand{\gdot}{g_{\bullet}}
\newcommand{\hdot}{h_{\bullet}}
\newcommand{\kdot}{k_{\bullet}}
\newcommand{\ldot}{\ell_{\bullet}}
\newcommand{\phidot}{\varphi_{\bullet}}
\newcommand{\sigdot}{\sigma_{\bullet}}
\newcommand{\cA}{{\mathcal A}}
\newcommand{\cC}{{\mathcal C}}
\newcommand{\CI}{\cC^{I}}
\newcommand{\CIn}[1]{\cC^{#1}}
\newcommand{\CIZ}{\CI/Z}
\newcommand{\CInZ}[1]{\CIn{#1}/Z}
\newcommand{\CJ}{\cC^{J}}
\newcommand{\tuple}{(\cC,I,Z,X,Y)}
\renewcommand{\cD}{{\mathcal D}}
\newcommand{\cE}{{\mathcal E}}
\newcommand{\cF}{{\mathcal F}}
\newcommand{\cG}{{\mathcal G}}
\renewcommand{\cH}{{\mathcal H}}
\newcommand{\cJ}{{\mathcal J}}
\renewcommand{\cL}{{\mathcal L}}
\newcommand{\cM}{{\mathcal M}}
\newcommand{\cN}{{\mathcal N}}
\newcommand{\cO}{{\mathcal O}}
\newcommand{\fG}{\mathfrak{G}}
\newcommand{\Alg}[1]{{#1}\text{-}{\EuScript Alg}}
\newcommand{\Abgp}{{\EuScript AbGp}}
\newcommand{\Gp}{{\EuScript Gp}}
\newcommand{\Pa}{$\Pi$-algebra}
\newcommand{\PAlg}{\Alg{\Pi}}
\newcommand{\Top}{{\EuScript Top}}
\newcommand{\XX}{{\EuScript X}}
\newcommand{\hJ}[1]{\widetilde{J}_{#1}}
\newcommand{\fJ}[1]{J[{#1}]}
\newcommand{\hoa}{\hat{\omega}}
\newcommand{\hsi}{\hat{\sigma}}
\newcommand{\hX}{\hat{X}}
\newcommand{\bY}{\bar{Y}}
\newcommand{\bz}{\mathbf{0}}
\newcommand{\bo}{\mathbf{1}}
\newcommand{\bk}{\mathbf{k}}
\newcommand{\bn}{\mathbf{n}}
\newcommand{\bnm}{\mathbf{n-1}}
\newcommand{\ZZ}{\mathbb Z}
\begin{document}

\title[Spectral sequences for the cohomology of diagrams]
{Local-to-global spectral sequences for the cohomology of diagrams}
 \author[D.~Blanc]{David Blanc}
 \address{Department of Mathematics\\
    University of Haifa\\ 31905 Haifa, Israel}
 \email{blanc@math.haifa.ac.il}
\author [M.W.~Johnson]{Mark W.~Johnson}
\address{Department of Mathematics\\
         Penn State Altoona\\ Altoona, PA 16601, USA}
\email{mwj3@psu.edu}
\author[J.M.~Turner]{James M.~Turner}
\address{Department of Mathematics\\
        Calvin College\\ Grand Rapids, MI 49546, USA}
\email{jturner@calvin.edu}
\date{July 9, 2007; revised April 3, 2008}
\subjclass{Primary: 55N99; \ secondary: 18G55,18G40,18G55}
\keywords{Diagrams, Andr\'{e}-Quillen cohomology, spectral sequences, 
local-to-global}

\maketitle

%
%s0      Introduction
%
\section{Introduction}\label{sint}

The cohomology of diagrams arises as a natural object of study in
several mathematical contexts: 
in deformation theory (see \cite{GSchaA,GSchaD,GGSchaD}),
and in classifying diagrams of groups, as in  \cite{CegaCD}. 
If $I$ is the one-object category  corresponding to a
group $G$, a diagram \w{X\in\CI} is just an object in $\cC$ 
equipped with a $G$-action, and its cohomology is the equivariant 
cohomology of \cite{IllmE1} (cf.\ \cite[\S 2]{PiacCD}). On the other
hand, for any discrete or Lie group $G$, let \w{I=\cO_{G}} denote the
orbit category of $G$: if $X$ is a $G$-space, \w{\XX:\cO_{G}\to\Top}
is the corresponding fixed point diagram \w[,]{\XX(G/H):=X^{H}} and
\w[,]{M:\cO_{G}\to\Abgp} is any system of coefficients, then the
corresponding cohomology \w{H(\XX;\cM)} is Bredon cohomology (cf.\
\cite[I,\S 4]{MayEHC}). 
Finally, when $I$ consists of a single arrow, and the
coefficients are constant, we have the usual cohomology of a pair. 
   See \cite{BGuinCD}, \cite{DSchuD}, \cite{FWillB}, \cite{OlumHS},
\cite{PaveG}, and \cite{BCarlD} for further applications. 

\begin{mysubsection}{Diagrams in homotopy theory}
\label{sdht}
The cohomology of diagrams also plays a major role in the
Dwyer-Kan-Smith theory for the rectification of homotopy-commuta\-tive
diagrams (cf.\ \cite{DKSmH} and \cite{DroHH,DKaHM}).
In fact, our interest in the subject was motivated by the related
realization problem for diagrams of \Pa s (graded groups with an action
of the primary homotopy operations): as in the case of a single \Pa\ 
(cf.\ \cite{BDGoerR}), the obstructions to realizing a diagram of \Pa s\
\w{\Lambda:I\to\PAlg} lie in appropriate cohomology groups of $\Lambda$
(see \cite[Thm.\ 6.3]{BJTurR}).

Furthermore, given a \Pa\ $\Gamma$, all distinct homotopy types 
realizing $\Gamma$ may be distinguished by a set of higher homotopy 
operations associated to a collection \w{(I^{\alpha})_{\alpha\in A}} of
finite indexing categories \w{I^{\alpha}} and homotopy-commutative
diagrams \w[,]{X^{\alpha}:I^{\alpha}\to \ho\Top} where all the spaces 
\w{X_{i}^{\alpha}} are wedges of spheres. Since these higher
operations are obstructions to the rectification of the diagrams
\w{(X^{\alpha})_{\alpha\in A}} (and thus the associated diagrams 
\w[),]{\Lambda^{\alpha}:=\pi_{\ast}X^{\alpha}:I^{\alpha}\to\PAlg} \ they
correspond to elements in the cohomology of $\Gamma$. Understanding the
cohomology groups of such diagrams may therefore be helpful in algebraicizing
(and organizing) the ``higher \Pa'' of a space $Y$, consisting of all higher
homotopy operations in \w[.]{\pi_{\ast} Y}
\end{mysubsection}

\begin{mysubsection}{Computing diagram cohomology}
\label{scdc}
Even the cohomology of a single map may be hard to calculate
(cf.\ \cite[\S 5.16]{BJTurR}), so some computational tools are needed.
For this purpose we construct ``local-to-global'' spectral sequences
for the cohomology of a diagram, which can be used to compute the cohomology
of the full diagram in terms of smaller pieces.

Given a small category $I$, a model category $\cC$ (in
the sense of \cite{QuiH}), and an $I$-diagram \w[,]{X\in\CI} one can
define the cohomology of $X$ with coefficients in any abelian group
object \w[.]{Y\in\CI} For technical reasons, we shall concentrate on the case
where \w{\cC=s\cA} is the category of simplicial objects over
some variety of universal algebras $\cA$: since the homotopy category
of simplicial groups is equivalent to that of (pointed connected)
topological spaces, this actually covers all cases of interest above.
Some of our results are valid, however, for an arbitrary simplicial
model category $\cC$.

Another reason for our interest in the ``local-to-global'' approach to
diagram cohomology is that in order for the higher homotopy
operation corresponding to a homotopy commutative diagram 
\w{X:I\to\ho\Top} to be \emph{defined}, all lower order operations 
(corresponding to subdiagrams of $I$) must vanish \emph{coherently}.
Thus an essential step in a cohomological description of higher order
operations is the ability to piece together local data to obtain
global information. 
\end{mysubsection}

\begin{remark}\label{rlg}
We should point out that our methods work (almost exclusively) for a
\emph{directed} indexing category $I$ (i.e., with only identities as
endomorphisms), which is a significant restriction.  However, such
diagrams certainly suffice for the description of higher homotopy
operations, as above: even the linear case \ -- \  when $I$ consists
of a single composable sequence of arrows \ -- \ is of interest, since
the realizability of such a diagram is essentially equivalent to
calculating higher Toda brackets. Furthermore, diagrams arising in
deformation theory (indexed by the nerve of a covering) are of this
form. Our methods, suitably modified (cf.\ Remark \ref{rgroup}), also
apply to diagrams indexed by the orbit category \w{\cO_{G}} of a group $G$.
\end{remark} 

\begin{mysubsection}{The spectral sequences}
Let $\cC$  be a simplicial model category and $I$ a directed index category,
and assume given diagrams \w[,]{Z:I\to\cC} and \w[,]{X,Y\in\CIZ} 
with $Y$ an abelian group object in \w[.]{\CIZ} Our main results may 
be summarized as follows:

\begin{thma}
There is a first quadrant spectral sequence with:
\[
E^{2}_{s,t}~=~\prod_{j\in\hJ{s}}~H^{t+s}(X_{j}/Z_{j},\hat{\phi}_{j})
~\Longrightarrow H^{s+t}(X/Z;\,Y)
\]
\end{thma}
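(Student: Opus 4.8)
The plan is to build the spectral sequence as the one associated to a tower (or filtration) obtained by decomposing the cohomology of the full diagram $X/Z$ according to a filtration of the directed index category $I$. Since $I$ is directed, it carries a natural "degree" or height function, and one can filter $I$ by the full subcategories $I_{(n)}$ on objects of height $\le n$; this is exactly the kind of filtration that lets one build diagrams over $I$ by successive attachments along the latching-type maps. Cohomology of diagrams is representable by a derived mapping space (or a derived functor of a $\Nat$-type functor), so the first step is to identify $H^{\ast}(X/Z;Y)$ with the homotopy groups of a space $\maps{X/Z}{Y}$ of the appropriate kind (a derived space of natural transformations, or sections), computed in the relevant model category of diagrams.

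Next I would exploit the fact that restriction along the inclusions $I_{(n-1)}\hookrightarrow I_{(n)}$ induces a tower of fibrations on these derived mapping spaces, whose successive homotopy fibers split as a product, indexed by the objects $j$ newly added at height $n$, of \emph{local} mapping spaces — essentially $\maps{X_{j}/Z_{j}}{\text{(something built from }Y\text{)}}$, where the relevant coefficients at $j$ are the "twisted" local coefficients $\hat{\phi}_{j}$ appearing in the $E^{2}$-term. This product decomposition is where the set $\hJ{s}$ enters: it is the indexing set for the objects at the appropriate filtration level, and the key point is that for a directed category the latching object computations localize completely at each new object. The spectral sequence of this tower of fibrations then has $E^{1}$ (or $E^{2}$, after reindexing) given by the homotopy groups of these fibers, i.e.\ the product of the local Andr\'e--Quillen cohomology groups $H^{t+s}(X_{j}/Z_{j},\hat{\phi}_{j})$, converging to $\pi_{\ast}\maps{X/Z}{Y}=H^{\ast}(X/Z;Y)$. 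First-quadrant-ness follows from the fact that $I$ is directed with finitely many (or appropriately bounded) objects in each filtration degree, together with connectivity of the local mapping spaces.

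The main obstacle, I expect, is establishing the product decomposition of the homotopy fibers with the \emph{correct} twisted coefficient systems $\hat{\phi}_{j}$ — that is, showing that attaching the objects of height $n$ to a diagram already defined on $I_{(n-1)}$ contributes, up to homotopy, a product of local cohomologies rather than something more entangled, and identifying precisely which coefficient object at $j$ governs each factor. This requires a careful analysis of the matching/latching objects $M_{j}X$ relative to the slice categories $j\!\downarrow\! I$ or $I\!\downarrow\! j$, and a verification that the relevant derived mapping space is computed by a cofibrant-fibrant replacement that is itself built cell-by-cell along the filtration. A secondary technical point will be checking convergence and the first-quadrant vanishing rigorously, which should reduce to boundedness of $\hJ{s}$ in $s$ and standard connectivity estimates for simplicial mapping spaces in $\cC = s\cA$; but the heart of the argument is the local splitting, and I would organize the proof so that everything reduces to that single statement, proved by induction on the filtration degree.
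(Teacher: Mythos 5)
Your global outline (filter $I$ by the subcategories $I_{n}$, show the induced tower of mapping spaces is a tower of fibrations, take its homotopy spectral sequence) does match the shape of the paper's argument, but the two steps you defer are precisely the content of the proof, and the mechanism you propose for producing them points at the wrong side of the mapping space. Theorem A is the \emph{coefficient}-truncation spectral sequence: the paper's key move (Proposition \ref{pone}) is the adjunction identifying $\Map_{\CIn{I_{n}}/\tau_{n}Z}(\tau_{n}X,\tau_{n}Y)$ with $\Map_{\sigma_{n}Z}(X,\hsi_{n}Y)$, where $\hsi_{n}$ is the right adjoint (completion/right Kan extension) of truncation. Under this identification the restriction map of the tower is induced on $\Map(X,-)$ by the augmentation $\hoa\colon Y\to\hsi_{n}Y$, so the fibration property is a condition on the \emph{coefficients} ($\cJ$-fibrancy, arranged by the replacement result Proposition \ref{ptwo}, or by Reedy fibrancy of $Y$ for the fine cover, Lemma \ref{lone}; this is where $\cC=s\cA$ and the section $Z\to Y$ coming from the abelian group structure are used), not a consequence of a cell structure on $X$. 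The fiber identification is then immediate: $\Fib(\hoa)$ is the trivial diagram over $Z$ on $I_{n}$, so a map of diagrams $X\to\Fib(\hoa)$ is determined by its components at the new objects $j\in\widetilde{J}_{n+1}$, and at each such $j$ the value of $\Fib(\hoa)$ is $\Fib\bigl(\hat{\phi}_{j}\colon Y_{j}\to\lim_{\phi\colon j\to i}Y_{i}\bigr)$; this is exactly what produces the product over $\widetilde{J}_{s}$ with the twisted coefficients $\hat{\phi}_{j}$, i.e.\ the relative cohomology groups of Definition \ref{drelc} (cf.\ Remark \ref{rfine}).

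By contrast, you locate the difficulty in latching objects of $X$ and in ``attaching the objects of height $n$'' on the source side, with a cofibrant replacement of $X$ built cell-by-cell. That analysis is the one the paper uses for the \emph{dual} spectral sequence (Theorem B, i.e.\ Theorem \ref{ttwo}), whose $E^{2}$-term is the cohomology of the maps $\eta_{t}\colon\theta_{t-1}^{t}X\to\tau_{t}X$ --- not a product of local terms with coefficients $\hat{\phi}_{j}$. The coefficients $\hat{\phi}_{j}$ are matching-type data of $Y$, invisible to latching data of $X$, so the ``local splitting'' you hope to establish by induction on source-side attachments cannot come out in the stated form. Concretely, what is missing from your proposal is (i) the adjunction $\tau_{n}\dashv\hsi_{n}$ together with the notion of $\cJ$-fibrancy and the existence of $\cJ$-fibrant replacements for abelian group objects, which is what makes the restriction maps fibrations, and (ii) the observation that the fiber of the coefficient augmentation vanishes on $I_{n}$, which is what gives both the product decomposition over $\widetilde{J}_{s}$ and the identification of the coefficient at $j$ as $\hat{\phi}_{j}$. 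As written, your plan asserts these two statements rather than proving them; note also that convergence is not just ``boundedness of $\widetilde{J}_{s}$'' --- the paper requires the cover to be left-orderable and remarks that without finiteness or connectivity hypotheses the spectral sequence need not converge.
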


This is constructed by taking increasing truncations of the coefficient
diagram $Y$ (cf.\ Theorem \ref{tone}). Here \w{H^{\ast}(X/Z,\phi)} 
denotes relative cohomology for a map of the coefficients 
(see Definition \ref{drelc}).

\begin{thmb}
There is a first quadrant spectral sequence with:
\[
E^{2}_{s,t}~=~H^{s+t}(\eta_{s};\,Y)~\Longrightarrow H^{s+t}(X/Z;\,Y)
\]
\end{thmb}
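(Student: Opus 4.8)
The plan is to run the argument \emph{dual} to the one behind Theorem~\ref{tone}: rather than filtering the coefficient object $Y$, I would filter $X$ itself, inside $\CIZ$, by the cellular filtration of a cofibrant replacement of $X$ as a diagram over the directed category $I$. In the projective model structure on $\CIZ$ (or on $s\cA^{I}/Z$ in the algebraic case) such an $X$ is built from cells, each indexed by a generating cofibration of $\cC = s\cA$ attached at an object $i$ along a string of non-identity morphisms of $I$ issuing from $i$; give a cell the \emph{degree} obtained by adding the simplicial dimension of the generating cofibration to the length of the string, and let $X^{(s)}$ be the subdiagram built from the cells of degree $\le s$. Then
\[
Z\;=\;X^{(-1)}\;\hra\;X^{(0)}\;\hra\;\cdots\;\hra\;X^{(s)}\;\hra\;\cdots,\qquad \colim_{s}X^{(s)}\;=\;X,
\]
with each stage map $\eta_{s}\colon X^{(s-1)}\to X^{(s)}$ a cofibration whose cofibre is a wedge of degree-$s$ cells; this $\eta_{s}$ is the map of objects whose relative cohomology $H^{*}(\eta_{s};Y)$, in the sense of Definition~\ref{drelc} (the long exact sequence of a map), appears in the statement.

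Next I would feed this into the corepresentability of diagram cohomology: $H^{*}(-/Z;Y)$ is computed by a derived mapping space $\Map_{\CIZ}(-,\bar{Y})$ into a fibrant abelian (Eilenberg--MacLane) model $\bar{Y}$ of the coefficient, so the tower $\{X^{(s)}\}$ produces a tower of fibrations
\[
\cdots\;\to\;\Map_{\CIZ}\bigl(X^{(s)},\bar{Y}\bigr)\;\to\;\Map_{\CIZ}\bigl(X^{(s-1)},\bar{Y}\bigr)\;\to\;\cdots\;\to\;\Map_{\CIZ}\bigl(X^{(-1)},\bar{Y}\bigr)\;\simeq\;\ast,
\]
the base being contractible since $H^{*}(Z/Z;Y)=0$. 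Because $X=\colim_{s}X^{(s)}$ and $\Map_{\CIZ}(-,\bar{Y})$ turns this colimit into the inverse limit of the tower (and a tower of fibrations computes its own homotopy limit), the Bousfield--Kan homotopy spectral sequence of the tower converges to $\pi_{*}\Map_{\CIZ}(X,\bar{Y})=H^{*}(X/Z;Y)$. It is first quadrant because the cofibre of $\eta_{s}$ is a wedge of degree-$s$ cells, hence $(s-1)$-connected, so $H^{m}(\eta_{s};Y)=0$ for $m<s$; and the convergence is strong, with no $\lim^{1}$ obstruction, since the maps $X^{(s)}\to X$ become increasingly highly connected.

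The fibre over the zero section of $\Map_{\CIZ}(X^{(s)},\bar{Y})\to\Map_{\CIZ}(X^{(s-1)},\bar{Y})$ is the relative mapping space whose homotopy groups, by the long exact sequence, are the relative cohomology groups $H^{*}(\eta_{s};Y)$, so $E^{1}_{s,t}=H^{s+t}(\eta_{s};Y)$; it remains to promote this to the $E^{2}$-page as stated. The most economical route I see is to compare with Theorem~\ref{tone}: if $\hJ{s}$ indexes precisely the degree-$s$ cells of the cofibrant model of $X$, an excision/additivity argument should identify $H^{s+t}(\eta_{s};Y)\cong\prod_{j\in\hJ{s}}H^{t+s}(X_{j}/Z_{j},\hat{\phi}_{j})$, making Theorem~B a reindexing of Theorem~\ref{tone} that inherits both the $E^{2}$-description and the convergence statement. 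This comparison --- checking that the cellular filtration of $X$ and the truncation filtration of $Y$ induce spectral sequences whose pages and differentials match under that identification --- is the step I expect to require the most care. The equivalent obstacle, if one stays on the mapping-space side, is to show that the $d_{1}$ of the tower above vanishes, which I would try to read off from the combinatorics of the directed category $I$: the degree-$s$ cells receive attaching data only from strictly lower degrees, and there are no non-identity endomorphisms in $I$ to feed a degree-$s$ contribution back into $d_{1}$, but turning this into a genuine vanishing statement means matching the simplicial cochain model for $\Map_{\CIZ}(-,\bar{Y})$ against the wedge description of $X^{(s)}/X^{(s-1)}$ and tracking the tower's connecting maps.
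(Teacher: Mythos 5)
Your filtration is not the one the theorem is about, and this is more than a cosmetic difference. In the paper, $\eta_{s}$ has a specific meaning fixed in \S \ref{sdc}: it is the coaugmentation $\eta_{s}\colon\theta_{s-1}^{s}X=\zeta_{I_{s-1}}\tau_{I_{s-1}}X\to\tau_{s}X$ attached to a \emph{right-orderable cover} $\cJ$ of $I$, and the proof of Theorem \ref{ttwo} runs the argument of Theorem \ref{tone} dually on the source: replace $X$ by a $\cJ$-cofibrant (and cofibrant) object via Proposition \ref{ptwoagain}, so that by Proposition \ref{dualtower} the tower of restriction maps $\Map_{\CIn{I_{s}}/\tau_{s}Z}(\tau_{s}X,\tau_{s}Y)\to\Map_{\CIn{I_{s-1}}/\tau_{s-1}Z}(\tau_{s-1}X,\tau_{s-1}Y)$ is a tower of fibrations whose fibers are $\Map(\Cof(\eta_{s}),Y\rest{J_{s}})$, with homotopy groups $H^{*}(\eta_{s};Y)$. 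Your tower instead filters a cofibrant replacement of $X$ by cellular skeleta inside $\CIZ$; its subquotients are wedges of free cells, not $\Cof(\eta_{s})$, and the filtration is not even indexed by the cover $\cJ$ appearing in the theorem. So the $E^{1}$-term you produce is not $H^{s+t}(\eta_{s};Y)$, and the crucial identification is exactly the step you leave open. Moreover the repair you suggest --- that $H^{s+t}(\eta_{s};Y)\cong\prod_{j\in\hJ{s}}H^{t+s}(X_{j}/Z_{j},\hat{\phi}_{j})$, making Theorem B a reindexing of Theorem \ref{tone} --- conflates the two dual spectral sequences. The fine-cover $E^{2}$ of Theorem \ref{tone} (Remark \ref{rfine}) uses the \emph{relative coefficients} $\hat{\phi}_{j}$, i.e.\ fibers of the coefficient diagram (kernels such as $\Ker(Y(a))$ in Example \ref{egsquare}), whereas the Theorem B term is the cohomology of the source-side map $\eta_{s}$ with the full coefficients $Y$; these are different groups in general, so no excision argument will equate them.

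Two further points. First, your effort to show that the $d_{1}$ of the tower vanishes is a red herring caused by indexing: as the proof of Theorem \ref{tone} states explicitly, in the paper's conventions the homotopy groups of the fibers of the tower \emph{are} called the $E^{2}$-term; no vanishing of a first differential is asserted or needed. Second, your convergence argument ("the cofibre of $\eta_{s}$ is a wedge of degree-$s$ cells, hence $(s-1)$-connected, so $H^{m}(\eta_{s};Y)=0$ for $m<s$, giving strong convergence with no $\lim^{1}$ term") is unjustified at this level of generality: the coefficient object $Y$ may consist of Eilenberg--Mac Lane objects of arbitrary and even varying dimensions (Remark \ref{rindex}), so connectivity of the skeleta does not translate into vanishing of cohomology in a range, and indeed the paper makes no such convergence claim --- it notes after Theorem \ref{tone} that convergence requires a finite cover or extra cohomological connectivity hypotheses. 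To prove the theorem as stated you should dualize the Theorem \ref{tone} argument directly: establish the $\cJ$-cofibrant replacement (Proposition \ref{ptwoagain}), show the truncation tower is a tower of fibrations and identify its fibers as mapping spaces out of $\Cof(\eta_{s})$ (Proposition \ref{dualtower}), and then take the homotopy spectral sequence of that tower.
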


This spectral sequence is constructed dually to the previous one, by taking 
increasing truncations of the \emph{source} diagram $X$ 
(see Theorem \ref{ttwo}). Here \w{H^{\ast}(\eta,Y)} denotes the usual 
cohomology of a map (or pair).

\begin{thmc}
If $I$ is countable, then for any ordering  \w{(c_{s})_{s=1}^{\infty}}
of the objects of $I$, there is a first quadrant spectral sequence with
\w[.]{E^{2}_{s,t}~=~H^{t+s}_{c_{s}}(X/Z;Y)~\Longrightarrow H^{s+t}(X/Z;\,Y)}
\end{thmc}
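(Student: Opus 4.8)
The plan is to filter the cohomology calculation by the given enumeration of the objects of $I$, rather than by the ``height'' filtrations behind Theorems~\ref{tone} and~\ref{ttwo}, and to take the spectral sequence of the resulting tower of fibrations. Put \w{I_{0}=\emptyset} and let \w{I_{s}\subseteq I} be the full subcategory on \w[;]{\{c_{1},\dots,c_{s}\}} since $I$ is directed so is each \w[,]{I_{s}} each inclusion \w{I_{s-1}\hookrightarrow I_{s}} adjoins a single object, and \w[.]{\bigcup_{s}I_{s}=I} What makes an \emph{arbitrary} ordering admissible is that a full subcategory exists on any set of objects, so no sieve or cosieve condition on the \w{I_{s}} is needed --- in contrast with the coefficient truncations of Theorem~\ref{tone}. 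The restriction \w{\operatorname{res}_{s}\colon\CIZ\to\CInZ{I_{s}}} has a left adjoint \w{\operatorname{Lan}_{s}} (left Kan extension), and for each object $i$ the canonical map \w{\colim_{s}(\operatorname{Lan}_{s}\operatorname{res}_{s}X)(i)\to X(i)} is an isomorphism, since $i\in I_{s}$ for all large $s$.

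Next, as in the proof of Theorem~\ref{ttwo}, I would take a cofibrant model of $X$ in \w{\CIZ} with a cellular presentation in which each cell is a free diagram \w{F_{c}(-)} at a single object \w[,]{c\in I} and regroup its cells according to the object they sit over: let \w{X^{(s)}} be the subdiagram assembled from the cells over \w[.]{c_{1},\dots,c_{s}} These form an increasing sequence of cofibrations with \w[,]{\colim_{s}X^{(s)}=X} and \w{X^{(s)}=\operatorname{Lan}_{s}\bar{X}^{(s)}} for the associated cellular $I_{s}$-diagram \w[,]{\bar{X}^{(s)}} whence \w[.]{\Map_{\CIZ}(X^{(s)},Y)\simeq\Map_{\CInZ{I_{s}}}(\bar{X}^{(s)},\operatorname{res}_{s}Y)} Applying \w{\Map_{\CIZ}(-,Y)} (with $Y$ fibrant) to \w{X^{(0)}\hookrightarrow X^{(1)}\hookrightarrow\cdots} yields a tower of fibrations \w[,]{\cdots\to W_{s}\to W_{s-1}\to\cdots\to W_{0}=\ast} \w[,]{W_{s}:=\Map_{\CIZ}(X^{(s)},Y)} with \w{\holim_{s}W_{s}\simeq\Map_{\CIZ}(X,Y)} because a sequential colimit along cofibrations is a homotopy colimit. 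The homotopy fiber \w{F_{s}} of \w{W_{s}\to W_{s-1}} is the space of ways to extend a given map on \w{I_{s-1}} across the single new object \w[,]{c_{s}} i.e.\ the homotopy fiber of \w{\Map(X_{c_{s}}/Z_{c_{s}},Y_{c_{s}})} over the latching and matching data inherited from \w[;]{I_{s-1}} by construction its homotopy groups are the local cohomology \w[.]{H^{\ast}_{c_{s}}(X/Z;Y)}

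The exact couple of this tower of fibrations then produces a spectral sequence abutting to the cohomology computed by \w[,]{\holim_{s}W_{s}} namely \w[.]{H^{\ast}(X/Z;Y)} With the indexing conventions of Theorems~\ref{tone} and~\ref{ttwo}, its \w{E^{2}}-term is \w[,]{H^{t+s}_{c_{s}}(X/Z;Y)} and it is first quadrant once one verifies the connectivity estimate \w[,]{H^{<s}_{c_{s}}(X/Z;Y)=0} i.e.\ that the cells over \w{c_{s}} affect diagram cohomology only in degrees \w[;]{\geq s} this is the analogue, for the object filtration, of the connectivity statement already used for the height filtration. Finally, the tower \w{\{W_{s}\}} stabilizes if $I$ is finite and is a countable tower of fibrations otherwise, so \w{\holim_{s}W_{s}} does compute \w[,]{H^{\ast}(X/Z;Y)} and the first-quadrant shape --- only finitely many filtration quotients in each total degree --- forces strong convergence, the \w{\lim^{1}}-term in the corresponding Milnor sequence vanishing.

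The step I expect to be the main obstacle is the identification of \w[,]{F_{s}} hence of \w[,]{E^{2}_{s,t}} with \w[,]{H^{\ast}_{c_{s}}(X/Z;Y)} together with the accompanying connectivity estimate that makes the spectral sequence first quadrant for a \emph{completely arbitrary} enumeration; both refine the corresponding points in Theorems~\ref{tone} and~\ref{ttwo}, where the objects adjoined at each stage shared a fixed height and the relevant latching/matching behaviour was controlled by that height. Once the tower \w{\{W_{s}\}} and this identification are in place, extracting the exact couple and checking strong convergence from the countability of $I$ is routine.
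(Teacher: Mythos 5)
Your plan filters the \emph{source} $X$ by the cells lying over $\{c_{1},\dots,c_{s}\}$, and this is where the argument breaks for a completely arbitrary ordering. In a cellular presentation of a cofibrant $X\in\CIZ$, a free cell attached at $c_{i}$ has attaching map landing in the value at $c_{i}$ of the previously attached cells, and that value receives contributions from every cell sitting at an object $d$ admitting a non-identity map $d\to c_{i}$. Such $d$ need not lie in $\{c_{1},\dots,c_{s}\}$, so ``the subdiagram assembled from the cells over $c_{1},\dots,c_{s}$'' is in general not a subcomplex: already for the single arrow $a\to b$ with the ordering $c_{1}=b$, $c_{2}=a$ there is no such $X^{(1)}$, since the cells over $b$ cannot be attached before those over $a$. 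The observation that full subcategories $I_{s}$ exist for any set of objects rescues the restriction functors, but not the cellular filtration, so the tower $W_{s}=\Map_{\CIZ}(X^{(s)},Y)$, its fibration properties, and the identity $\colim_{s}X^{(s)}=X$ are all unsupported. Moreover, even at stages where your filtration does make sense, the fiber of $W_{s}\to W_{s-1}$ is the space of maps of the new relative cells (free at $c_{s}$) into $Y$, i.e.\ a mapping space into $Y_{c_{s}}$ alone --- a relative-cohomology term of the kind occurring in Theorem \ref{ttwo} --- and \emph{not} the local cohomology of Definition \ref{dlocoh}, which is a derived mapping space from the arrow $\hocolim_{d\in I/c_{s}}X_{d}\to X_{c_{s}}$ to the arrow $Y_{c_{s}}\to\holim_{b\in c_{s}/I}Y_{b}$: the morphisms \emph{out of} $c_{s}$ (possibly to objects not yet added) contribute to $\cH_{c_{s}}(X/Z,Y)$ but are invisible in your fiber. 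The sentence ``by construction its homotopy groups are the local cohomology'' is precisely what has to be proved and is false for this construction; likewise the connectivity estimate $H^{<s}_{c_{s}}(X/Z;Y)=0$ for an arbitrary enumeration is asserted, not established (and is not something the paper needs).

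For contrast, the paper's proof of this theorem does not filter $X$ (or $Y$) at all. It fixes both and instead filters the \emph{naturality conditions}: the relevant tower \wref{eqvarp} consists of the spaces $\ctrans{Z}{I_{k}}{X}{Y}$ of discrete transformations required to be natural only with respect to the subcategory $I_{k}$, where the $I_{k}$ are obtained by deleting one internal object at a time while retaining the weakly initial and final objects. The substantive work is then the auxiliary $L$-tower of Sections \ref{cauxt}--\ref{cfibat}: Proposition \ref{ppsifib}, Propositions \ref{pfibl}--\ref{pfibrest} and Corollary \ref{cfib} show (after replacing $Y$ by a strongly fibrant model, Remark \ref{rstrongf}) that these maps are fibrations, and Proposition \ref{pfiber} identifies the resulting fibers with $\cH_{c}(X/Z,Y)$, which is exactly the two-sided structure your cellular fiber misses. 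If you want to salvage a source-filtration argument, you would have to restrict to orderings adapted to the directed structure of $I$ (so that each $X^{(s)}$ is a genuine subcomplex), but then you recover a variant of Theorem \ref{ttwo}, with relative coefficients at $Y_{c_{s}}$, rather than the local-cohomology $E^{2}$-term claimed in Theorem C.
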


This is constructed by successively omitting the objects \w{c_{s}} 
from $I$ (see Theorem \ref{tthree}).
Here \w{H^{\ast}_{c}(X/Z,Y)} denote the local cohomology groups 
at an object \w{c\in I} (see Definition \ref{dlocoh}).

There are versions of all three spectral sequences defined for any suitable
cover $\cJ$ of $I$ (Definition \ref{dcover}). In particular, the 
spectral sequences always converge if $\cJ$ is finite, hence if $I$ itself
is finite.
\end{mysubsection}

\begin{mysubsection}{Other variants}
\label{sov}
Other spectral sequences for the cohomology of a diagram  have appeared in 
the literature. One should mention the universal coefficients
spectral sequence of Piacenza (see \cite[\S 1]{PiacD}), the the
$p$-chain spectral sequence of Davis and L\"{u}ck (see \cite{DLuckP}), 
the equivariant Serre spectral sequence of Moerdijk and Svensson 
(see \cite{MSvenS}, and the local-to-global spectral sequences of
Jibladze and Pirashvili (see \cite{JPirCA}) and Robinson (see
\cite{MRobiC}) \ -- \ though the last three use a different definition
of cohomology, based on the Baues-Wirsching and Hochschild-Mitchell
cohomologies of categories (cf.~\cite{BWirC,BMitR}). 
\end{mysubsection}

\begin{mysubsection}{Organization}
\label{sorg}
Section \ref{ccd} provides background material on diagrams, their covers, 
and the model category of diagrams. In Section \ref{ctow} we determine 
when the ``restriction tower'' associated to a cover of the indexing category
$I$ is a tower of fibrations, and in Section \ref{css} we use this to set up
the first two spectral sequences.

The second half of the paper is devoted to the (somewhat more technical) 
approach based on ``localizing at an object'': 
Section \ref{calc} provides the setting, and explains the method. 
In Section \ref{cauxt} we describe an auxiliary construction associated to
the tower of certain covers of $I$, and in Section \ref{cfibat} show
that this auxiliary tower is a tower of fibrations. Finally, in 
Section \ref{cif} we identify the fibers of the new tower, and obtain the
third spectral sequence.
\end{mysubsection}

\begin{ack}
We would like to thank the referee for his or her comments.
This research was supported by BSF grant 2006039; the third author was 
also supported by NSF grant DMS-0206647 and a Calvin Research Fellowship (SDG).
\end{ack}

%
%s1      The category of diagrams
%
\section{The category of diagrams}
\label{ccd}

Our object of study will be the category \w{\CI} of diagrams \ -- \ i.e.,
functors from a fixed small (often finite) indexing category $I$ 
into a model category $\cC$. The maps are natural transformations. 
In this section we define some concepts and introduce notation related 
to $I$ and \w[:]{\CI}

\begin{defn}\label{dcover}
Let $I$ be any small category. By an $N$-indexed \emph{cover} of $I$ we
mean some collection \w{\cJ=\{J_{\nu}\}_{\nu\in N}} of subcategories
of $I$, such that each arrow in $I$ belongs to at least one \w[.]{J_{\nu}}

A cover \w{\cJ=\{J_{\nu}\}_{\nu\in N}} for $I$ will be
called \emph{orderable} if the relation:
\[
\nu_{1}\prec \nu_{2}~~\stackrel{\text{Def}}{\Longleftrightarrow}~~
\exists~i_{1}\in J_{\nu_{1}},~i_{2}\in J_{\nu_{2}}~
\exists~\phi:i_{2}\to i_{1}~\text{in}~ I \ \text{with}~
i_{1}\not\in J_{\nu_{2}}\ \text{or}~i_{2}\not\in J_{\nu_{1}}~.
\]
defines a partial order on $N$, and the partially ordered set \w{(N,\prec)}
can be embedded as a (possibly infinite) segment of \w[.]{(\ZZ,\leq)}
Choosing such an embedding \w[,]{N\subseteq\ZZ} we may think of $\cJ$ as
indexed by integers, and we can then filter $I$ by setting 
\w[.]{\fJ{n}:=\bigcup_{i\leq n}~J_{i}} 
If $N$ is bounded below in $\ZZ$ we say that $\cJ$ is 
\emph{right-orderable}, and if it is bounded above we say it is
\emph{left-orderable}. 
\end{defn}

\begin{remark}\label{rorder}
Note that the linear ordering of $\cJ$ (indicated by the indices)
is not generally uniquely determined by the partial order $\prec$:
there may be elements of $\cJ$ which are not comparable under $\prec$.
This happens when all maps out of \w{J_{n}} actually land in \w{\fJ{k}} 
for \w[.]{k<n-1} In this case the linear ordering of 
\w{J_{n}} and \w[,]{J_{n-1}} for example, may be switched with impunity.
\end{remark}

\begin{mysubsection}{Directed indexing categories}
\label{sdirect}
A \emph{directed indexing category} is a small category $I$ equipped 
with a map \w[,]{\deg:\Obj(I)\to\ZZ} such that for every non-identity 
map \w{\phi:j\to i} in $I$, \w[.]{\deg(j)>\deg(i)} 
Then $I$ is filtered by the full subcategories \w{I_{n}=\fJ{n}} whose objects 
have degree at most $n$.

An orderable cover \w{\cJ=\{J_{n}\}_{n\in N}} for such an $I$ will
be called \emph{compatible} (with the choice of \w[)]{\deg} if there
is a strictly increasing sequence of integers \w{(k_{n})_{n\in N}}
such that \w[.]{\Obj(J_{n})=\deg^{-1}([k_{n-1},k_{n}])}
\end{mysubsection}

\begin{example}\label{emainex}
The \emph{fine cover} for a directed indexing category $I$ is defined
by letting \w{J_{n}} be the subcategory obtained from the
``difference categories'' \w{\hJ{n}:=I_{n}\setminus I_{n-1}}
(discrete, by assumption) by adding all the maps from any of these
objects into \w[.]{I_{n-1}}

For instance, if \w{I=[\bn]} is the \emph{linear} category of
$n$ composable maps (with degrees as labels):
\[
n~\xra{\phi_{n}}~n-1~\xra{\phi_{n-1}}~\dotsc~2~\xra{\phi_{2}}~\dotsc~1~
\xra{\phi_{1}}~0~,
\]
then \w{I_{k}} consists of the $k$ arrows on the right,
\w[,]{\hJ{k}=\{k\}} and the fine cover thus is \w[.]{J_{k}:=\{\phi_{k}\}}
\end{example}

\begin{example}\label{squarefilter}
If $I$ is the commutative square diagram 

\mydiagram[\label{basicsquare}]{
4 \ar[r]^{d} \ar[d]_{c} & 3 \ar[d]^{b} \\
2 \ar[r]_{a} & 1
}
then \w{\hJ{k}} contains only $k$, while 
\w[,]{J_{2}=\{a:2 \to 1 \}}  \w[,]{J_{3}=\{b:3 \to 1\}} and \w{J_{4}} contains
both \w{c:4 \to 2} and \w{d:4 \to 3} (since \w{I_{3}} contains both $2$ and $3$). 
\end{example}

\begin{remark}\label{rgroup}
As noted in the introduction, a group (or monoid) $G$ may be thought of
as a category with a single object. If we start with a directed indexing 
category \w[,]{I'} and for \w[,]{i\in I'} we add maps \w{g:i\to i} for 
each \w{g\in G} for some group \w{G=G_{i}} (with suitable commutation 
relations with the maps of \w[),]{I'} we obtain a small category $I$ 
(no longer directed) whose diagrams describe directed systems of 
group actions.  Clearly, any orderable cover \w{\cJ'} of \w{I'} induces 
an orderable cover $\cJ$ of $I$.
\end{remark}

\begin{example}\label{egroup}
Let \w{I'} consist of two parallel arrows
\w[,]{\phi_{1},\phi_{-1}:i\to j} \w[,]{G_{i}=\ZZ/2} and \w[.]{G_{j}=0}
Then the indexing category $I$ has a single new non-identity map
\w{f:i\to i} and \w{\phi_{k}\circ f=\phi_{-k}} (\w[).]{k= \pm 1}
Compare \cite{DatuC}.
\end{example}

\begin{mysubsect}{Model categories}
\label{smodel}

Now let $\cC$ be a simplicial model category (cf.\ 
\cite[II, \S 1]{QuiH}), and let \w{\CI} denote the functor category of
$I$-diagrams in $\cC$. There are (at least) two relevant simplicial
model category structures on \w[:]{\CI}

\begin{enumerate}
\renewcommand{\labelenumi}{(\alph{enumi})}
\item For general $I$ and cofibrantly generated $\cC$, we have the
\emph{diagram} model category structure, in which the weak equivalences
and fibrations are defined objectwise, and the cofibrations are generated
(under retracts, pushouts, and transfinite compositions) by the free maps
(free on a generating cofibration at some \w[)]{i\in I} \ -- \
cf.\ \cite[Theorem~11.6.1]{HirM}.
\item If $I$ is a directed indexing category as  above,
it is in particular a (one-sided) Reedy category
(cf.\ \cite[\S 15.1.1]{HirM}). Thus \w{\CI} has a \emph{Reedy} model
category structure, in which the weak equivalences are defined objectwise,
the cofibrations are defined by attaching of a suitable latching object,
and the fibrations are defined by requiring that the structure map to the
matching objects are all fibrations (cf.\ \cite[\S 15.3]{HirM}).
\end{enumerate}
\end{mysubsect}

\begin{remark}\label{rmodel}
In the cases where $I$ is a Reedy category and $\cC$ is cofibrantly
generated, the identity \w{\Id:\cC\to\cC} is a strong Quillen functor
(actually a Quillen equivalence) 
between the two model category structures (see \cite[Theorem~15.6.4]{HirM}), 
considered as a right adjoint from the Reedy model structure to the
diagram model structure.  As a consequence, 
every Reedy fibration is an objectwise fibration  
(cf.\ \cite[Proposition~15.3.11]{HirM}), and conversely, every 
cofibration in the diagram model category is a Reedy cofibration.
In both cases we use the same simplicial mapping spaces
\w[,]{\Map_{\CI}(X,Y)} (sometimes denoted simply by
\w[),]{\map(X,Y)} with
\begin{myeq}[\label{eqmaps}]
\Map_{\CI}(X,Y)_{n}~:=~\Hom_{\CI}(X\times\Delta[n],Y)~.
\end{myeq}
\end{remark}

\begin{mysubsection}{Diagrams over $Z$}
\label{scomma}
For a fixed ground diagram \w[,]{Z:I\to\cC}
the comma category \w{\CIZ} consists of diagrams \w{X:I\to\cC}
over $Z$ \ -- \ that is, for each \w{i\in I} we have maps
\w[,]{p_{i}:X_{i}\to Z_{i}} natural in $I$. Once again \w{\CIZ} has the
two model category structures described above. The simplicial
mapping space \w[,]{\Map_{\CIZ}(X,Y)} defined as in \wref[,]{eqmaps}
will usually be denoted simply by \w[.]{\Map_{Z}(X,Y)} 
We may assume that $Z$ is Reedy fibrant, so in particular (objectwise) fibrant.
\end{mysubsection}

\begin{mysubsection}{Sketchable categories}
\label{ssketch}
Most of our results are valid for quite general simplicial model
categories $\cC$. However, as noted in the introduction, we shall be
mainly interested in the case where \w{\cC=s\cA} is the category of
simplicial objects over some FP-sketchable category $\cA$ (essentially:
a category of (possibly graded) universal algebras  \ -- \ cf.\
\cite[\S 1]{ARosiL}). Note that any such $\cC$ is cofibrantly
generated \ -- \ in fact, a resolution model category (see \cite[\S 3]{BJTurR}). 
Such an $\cA$ will be called \emph{$\fG$-sketchable} if
it is equipped with a faithful forgetful functor to a category of
graded groups (compare \cite[\S 4.1]{BPescF}).
The important property for our purposes is that a map \w{f:X\to Y}
in $\cC$ is a fibration if and only if it is an epimorphism onto 
the basepoint component of $Y$ (cf.\ \cite[II, \S 3, Prop.~1]{QuiH}).

If we let \w[,]{\cA=\Gp} we obtain the homotopy category of
pointed connected topological spaces (see \cite[V, \S 6]{GJarS}), so
our assumptions cover all the topological applications mentioned in
the introduction.

In this context we may need to consider diagrams over a fixed ground
diagram $Z$: following \cite[\S 2]{QuiC} and \cite[\S 3]{BecT},  
for (diagrams of simplicial objects in) a $\fG$-sketchable category $\cA$,
one may identify $Z$-modules with abelian group objects over $Z$.
Thus we may be forced to work in \w{\CIZ} if we want to study cohomology
with twisted coefficients.
\end{mysubsection}

\begin{mysubsection}{Diagram completion}
\label{scompl}
Any inclusion of categories \w{J\hookrightarrow I} induces a
forgetful \emph{truncation} functor
\w[,]{\tau=\tau^{I}_{J}:\CI\to\CJ} and this has a right
adjoint \w[,]{\xi=\xi^{I}_{J}:\CJ\to\CI} which assigns to a
diagram \w{Y:J\to\cC} the diagram \w{\xi Y:I\to\cC} with
\w{\xi Y(i):=\lim_{i/J} Y} for each \w{i\in I} (where \w{i/J}
is the obvious subcategory of the under category \w[).]{i/I}
Note that \w{\xi Y(j)=Y_{j}} for \w[.]{j\in J} Also, if
\w{J\subseteq J' \subseteq I} then
\w[,]{\xi^{J'}_{J}=\tau_{J'}^{I}\circ\xi^{I}_{J}} 
\w[,]{\xi^{I}_{J}=\xi^{I}_{J'}\circ\xi^{J'}_{J}} and
\w[,]{\tau^{I}_{J}=\tau^{J'}_{J}\circ\tau^{I}_{J'}}
so we shall often omit the superscripts from these functors, with the second 
category understood from the context.

The resulting monad \w{\sigma_{J}:=\xi_J\circ\tau_J:\CI\to\CI}
is called the \emph{completion} at $J$, and we denote the
augmentation of the adjunction by \w[.]{\omega_{J}:Y\to\sigma_J Y}

Moreover, given a fixed \w[,]{Z\in\CI} the truncation functor
\w{\hat{\tau}_{J}:\CI/Z\to\CJ/\tau Z} also has a right adjoint
\w[,]{\hat{\xi}_{J}:\CJ/\tau Z\to\CIZ} with the limit
\w{\hat{\xi}_{J} Y(i):=\lim_{i/J} Y} taken over \w{\tau_{J}Z}
(that is, the diagram whose limit we take consists of \w{Y\rest{i/J}} 
mapping to \w[,]{\tau_{J}Z} where the latter includes also \w[).]{Z_{i}}
Thus the completion at $J$ in \w{\CIZ} is:
%
%(0)
\begin{myeq}\label{ezero}
\hsi_{J}Y(j)~=~\sigma_{J}Y(j)\,\times_{\sigma_{J}Z(j)}~Z_{j}~,
\end{myeq}
\noindent where the structure map
\w{\sigma_{J}q:\sigma_{J}Y\to\sigma_{J}Z} is induced by the
functoriality of limits.  Once again, there will be an augmentation
\w[.]{\hoa_J:Y \to \hsi_J Y}
\end{mysubsection}

\begin{example}\label{ecompl}
If \w{I=[\bn]} is linear (\S \ref{emainex}) and \w{J=[\bk]} is an
initial (right) segment, then for any tower \w{Y:[\bn]\to\cC} we have:
\[
\sigma_J Y(i)~=~\begin{cases}
Y_{i} & \text{if}\ i\leq k\\
Y_{k} & \text{if}\ i\geq k
\end{cases}
\]
\end{example}

\begin{example}\label{ecomplsquare}
If $I$ is the commutative square of \S \ref{squarefilter}, then
\w{\sigma_{I_{3}} Y} is the pullback diagram
\mydiagram{
Y_{2} \times_{Y_{1}} Y_{3}  \ar[r] \ar[d] & Y_{3} \ar[d]^{Y(b)} \\
Y_{2} \ar[r]_{Y(a)} & Y_{1}   ,
} 
while \w{\hsi_{I_{3}}Y(3)} is the further pullback 
\mydiagram{
\hsi_{I_{3}}Y(3) \ar[r] \ar[d] & Y_{2} \times_{Y_{1}} Y_{3} \ar[d] \\
Z_{4}  \ar[r] & Z_{2} \times_{Z_{1}} Z_{3} .
}
\end{example}

\begin{example}\label{ecomplmatch}
If \w{I=\Delta'\subseteq \Delta^{\op}} is the indexing category for
restricted simplicial objects $Y$ (without degeneracies), and $J$ is its
truncation to dimensions $<n$, then \w{\sigma_J Y(n)=M_{n}Y} is the
classical matching object of \cite[X,\,\S 4.5]{BKaH}
\end{example}

\begin{mysubsection}{Maps of diagrams}
\label{smaps}
Given a fixed Reedy fibrant ground diagram \w[,]{Z:I\to\cC} consider
the simplicial mapping space \w{\Map_{Z}(X,Y)} as in \S \ref{scomma}
for \w[,]{X,Y\in\CI/Z} where $X$ is cofibrant and  $Y$ is fibrant.

In the cases of interest to us, $Y$ will be an abelian group object
in \w[,]{\CI/Z} so the homotopy groups of $\Map_{Z}(X,Y)$ are the cohomology
groups of $X$ with coefficients in $Y$ (see \cite[\S 5]{BJTurR} for
further details). In order to build our restriction tower, 
we need an appropriate orderable
cover $\cJ$ of $I$ (\S \ref{dcover}), yielding a filtration \
\[
I\supseteq \dotsc\supseteq\ I_{n}\ \supseteq\ I_{n-1}\ \supseteq\ \dotsc~.
\]

Let \w{M_{n}:=\Map_{\CIn{I_{n}}/\tau_{n}Z}(\tau_{n}X,\tau_{n}Z)} for each
\w[,]{n\in N} where \w{\tau_{n}X}  is the restriction of a diagram 
\w{X\in\CI} to \w[.]{I_{n}} The inclusions \w{I_{n-1}\hra I_{n}} and 
\w{I_{n}\hra I} induce maps \w{\rho_{n}:M_{n}\to M_{n-1}} and
\w{\hat{\rho}_{n}:M\to M_{n}} which fit into a tower:

%(1)
\mydiagram[\label{eone}]{
\Map_{Z}(X,Y) \ar[dr]^>>>>{\hat{\rho}_{n+1}} \ar@/^1pc/[drr]^>>>>{\hat{\rho}_{n}}
\ar@/^2pc/[drrr]^>>>>{\hat{\rho}_{n-1}} & & & & \\
\ldots \ar[r] & ~~M_{n+1}~~~~ \ar[r]^{\rho_{n+1}} &
~~M_{n}~~~~ \ar[r]^{\rho_{n}} &
~~M_{n-1}~~~~ \ar[r]^{\rho_{n-1}} & ~~~~\ldots M_{0}
}
\noindent with
%
%(2)
\begin{myeq}\label{etwo}
\Map_{Z}(X,Y)~\cong~\lim_{n}\,M_{n}~.
\end{myeq}
\end{mysubsection}

%
%s2      A tower of fibrations
%
\section{A tower of fibrations}
\label{ctow}

To determine when \wref{eone} is a tower of fibrations (so that
\wref{etwo} is a homotopy limit), we need the following:

\begin{defn}\label{dgood}
Let $I$ be an indexing category, $\cC$ a model category,
and \w[.]{Z\in\CI} Given an orderable cover
\w{\cJ=\{J_{\nu}\}_{\nu \in N}} of $I$ with associated filtration
\w[,]{(I_{n})=(\fJ{n})_{n\in \ZZ}} let \w{\tau_{k}:\CI\to \CIn{I_k}} and
\w{\tau^{m}_{k}:\CIn{I_m}\to \CIn{I_k}} denote the truncation functors, with
adjoints indexed accordingly. A diagram \w{Y\in\CIZ} is called
$\cJ$-\emph{fibrant} if for each \w[,]{n\in \ZZ} the augmentation
\w{\hoa_{n+1}:\tau_{n+1}Y\to\hsi_{n}^{n+1} Y=\hsi^{I_{n+1}}_{I_{n}}Y} is a
fibration in 
\w[.]{\CIn{I_{n+1}}/\sigma_{n}^{n+1}Z=\CIn{I_{n+1}}/\sigma^{I_{n+1}}_{I_{n}}Z}
\end{defn}

\begin{remark}\label{assume}
Because we assumed the  degree is strictly decreasing, \w{I_{n+1}} and
$I$ are the same so far as the augmentation map \w{\hoa_{n+1}} is concerned.
Thus if we assume for simplicity that \w[,]{I=I_{n+1}} then
\w{\hoa_{n+1}} may be identified with its adjoint map \w{Y\to\hsi_{n}Y} 
in \w[.]{\CIn{I_{n+1}}/\sigma_{n}^{n+1}Z=\CI/\sigma_{n}Z}
\end{remark}

%
%     Proposition 1:   Fibrations
%
\begin{prop}\label{pone}
Assume \w{\cJ=\{J_{\nu}\}_{\nu \in N}} is an orderable cover of $I$,
\w{X\in\CIZ} is cofibrant, and \w{Y\in\CIZ} is a $\cJ$-fibrant abelian
group object. Then 
$$
F_{n+1}~\to~M_{n+1}~\xra{\rho_{n+1}}~M_{n}
$$
is a fibration sequence of simplicial abelian groups for each
\w[,]{n\in \ZZ} and the fiber $F_{n+1}$ is
\w[.]{\Map_{\CInZ{J_{n+1}}\rest{J_{n+1}}}\,(X\rest{J_{n+1}},~\Fib(\omega_{n+1}))}
Here \w{\Fib(\omega_{n+1})} denotes the fiber (in 
\w[)]{\CIn{I_{n+1}}/\sigma_{n}^{n+1} Z} of the augmentation 
\w[.]{\omega_{n+1}:\tau_{n+1}Y\to\sigma_{n}^{n+1} Y=\sigma_{I_{n}}^{I_{n+1}}\,Y}
\end{prop}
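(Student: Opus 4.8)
The plan is to identify the fiber $F_{n+1}$ by passing through adjunctions, and to establish the fibration property by reducing to the $\cJ$-fibrancy hypothesis on $Y$. Throughout, I adopt the simplification of Remark~\ref{assume}, so $I=I_{n+1}$ and the augmentation $\omega_{n+1}$ is identified with $Y\to\sigma_{n}Y$ in $\CI/\sigma_{n}Z$; similarly $\hoa_{n+1}$ becomes $Y\to\hsi_{n}Y$. The first step is to rewrite the two mapping spaces appearing in the tower. By definition $M_{n+1}=\Map_{\CIn{I_{n+1}}/\tau_{n+1}Z}(\tau_{n+1}X,\tau_{n+1}Y)$, which under the present identification is just $\Map_{\CIZ}(X,Y)$. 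For $M_{n}$ I would use the $(\tau_{n},\hsi_{n})$-adjunction of \S\ref{scompl}: since $\hat{\tau}_{n}$ is left adjoint to $\hsi_{n}$, we have $\Map_{\CIn{I_{n}}/\tau_{n}Z}(\tau_{n}X,\tau_{n}Y)\cong \Map_{\CIZ}(X,\hsi_{n}Y)$, naturally in both variables; moreover under this isomorphism the map $\rho_{n+1}$ becomes $(\hoa_{n})_{\ast}$, the map induced by post-composition with the augmentation $\hoa_{n}:Y\to\hsi_{n}Y$. So the tower stage $M_{n+1}\xra{\rho_{n+1}}M_{n}$ is identified with
\[
\Map_{\CIZ}(X,Y)~\xra{(\hoa_{n})_{\ast}}~\Map_{\CIZ}(X,\hsi_{n}Y)~.
\]

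Next I would analyze this map of simplicial abelian groups. Since $Y$ is an abelian group object over $Z$ and $X$ is cofibrant with $Z$ (hence $Y$, $\hsi_{n}Y$) fibrant, both mapping spaces are simplicial abelian groups, $(\hoa_{n})_{\ast}$ is a homomorphism, and its fiber is $\Map_{\CIZ}(X,\Fib(\hoa_{n}))$, where the fiber of $\hoa_{n}$ is taken in $\CI/\sigma_{n}Z$. The key computation is that this fiber over $Z$ agrees with the fiber of the non-relative augmentation: from the pullback formula \wref{ezero}, $\hsi_{n}Y(i)=\sigma_{n}Y(i)\times_{\sigma_{n}Z(i)}Z_{i}$, one checks objectwise that $\Fib(Y\to\hsi_{n}Y)(i)=\Fib(Y_{i}\to\sigma_{n}Y(i))=\Fib(\omega_{n})(i)$, since pulling back along $Z_{i}\to\sigma_{n}Z(i)$ does not change the fiber over the basepoint. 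Furthermore this fiber diagram is supported away from $I_{n}$: for $j\in I_{n}$ one has $\sigma_{n}Y(j)=Y_{j}$, so $\Fib(\omega_{n+1})(j)=\ast$. Hence mapping into it only sees the objects of $\hJ{n+1}=I_{n+1}\setminus I_{n}$, and by a free/cofibrancy argument (the free maps generating cofibrations are concentrated at single objects) $\Map_{\CIZ}(X,\Fib(\omega_{n+1}))$ reduces to $\Map_{\CIn{J_{n+1}}/\tau Z\rest{J_{n+1}}}(X\rest{J_{n+1}},\Fib(\omega_{n+1})\rest{J_{n+1}})$, giving exactly the claimed formula for $F_{n+1}$.

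Finally, to see that $\rho_{n+1}$ is a fibration of simplicial sets (equivalently, since these are simplicial abelian groups, that $(\hoa_{n})_{\ast}$ is surjective on the relevant simplicial kernels, or that the relevant lifting holds), I would use that $\Map_{\CIZ}(X,-)$ sends fibrations between fibrant objects to fibrations of simplicial sets, because $X$ is cofibrant in the (diagram or Reedy) model structure on $\CIZ$ and the mapping space \wref{eqmaps} has the pushout-product property of a simplicial model category. By the $\cJ$-fibrancy hypothesis on $Y$ (Definition~\ref{dgood}, in the form of Remark~\ref{assume}), the augmentation $\hoa_{n}:Y\to\hsi_{n}Y$ is a fibration in $\CI/\sigma_{n}Z$, hence — via Remark~\ref{rmodel}, Reedy fibrations being objectwise fibrations — a fibration in the appropriate sense; applying $\Map_{\CIZ}(X,-)$ then yields that $(\hoa_{n})_{\ast}$, and so $\rho_{n+1}$, is a fibration. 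Combined with the fiber identification above, this gives the fibration sequence $F_{n+1}\to M_{n+1}\xra{\rho_{n+1}}M_{n}$.

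I expect the main obstacle to be the reduction of the fiber from a mapping space over all of $I$ to one over the subcategory $J_{n+1}$: one must carefully track the comma-category structure over $Z$ (the fiber $\Fib(\omega_{n+1})$ is an object of $\CIn{I_{n+1}}/\sigma_{n}^{n+1}Z$, not simply of $\CIn{I_{n+1}}$), verify that restricting to $J_{n+1}$ loses no information because $\Fib(\omega_{n+1})$ is trivial on $I_{n}$ and the only arrows of $I$ not already in some $J_{\nu}$ with the needed containment are handled by the orderability of $\cJ$, and confirm that the relative mapping space over $\tau Z\rest{J_{n+1}}$ is the correct target — in particular that maps $X\rest{J_{n+1}}\to\Fib(\omega_{n+1})\rest{J_{n+1}}$ automatically respect the structure maps to $Z$ on the nose. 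The surjectivity/fibration half is comparatively routine once the simplicial model category axioms are invoked, so the bookkeeping around the comma category and the support of the fiber is where the real care is needed.
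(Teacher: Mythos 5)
Your proposal is correct and follows essentially the same route as the paper's proof: the \w{(\tau_{n},\hsi_{n})}-adjunction identifies \w{\rho_{n+1}} with the map induced by \w[,]{\hoa_{n+1}} $\cJ$-fibrancy plus the simplicial model category axiom gives the fibration of mapping spaces, and the fiber is identified objectwise using the pullback description of \w{\hsi_{n}} and the triviality of \w{\Fib(\hoa_{n+1})} on \w[.]{I_{n}} The only cosmetic difference is that the reduction to \w{J_{n+1}} needs no free/cofibrancy argument \ -- \ it follows, as the paper argues, directly from the fiber being trivial over $Z$ on \w{I_{n}} (via the abelian-group-object section) together with orderability of $\cJ$.
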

\begin{proof}
Assume for simplicity that \w[,]{I=I_{n+1}(=\fJ{n+1})} with
\w{\tau_{n}=\tau_{I_{n}}:\CI\to\CIn{I_{n}}} and \w{\sigma_{n}(=\sigma_{\fJ{n}})} 
the completion at \w{I_{n}(=\fJ{n})} (as in Remark \ref{assume}).
Then there is a natural adjunction isomorphism:
$$
\Map_{\CIn{I_{n}}/\tau_{n} Z}(\tau_{n} X,\tau_{n} Y)~
=~\Map_{\CI/\sigma_{n} Z}(X,\hsi_{n} Y)~,
$$
\noindent under which \w{\rho_{n}} is identified with the map induced
in \w{\Map_{\sigma_{n}Z}(X,-)} by \w[.]{\hoa_{n+1}:Y\to\hsi_{n} Y}
This \w{\hoa_{n+1}} is a fibration in \w{\CI/\sigma_{n} Z} by Definition
\ref{dgood}, and thus induces a fibration of mapping spaces, with fiber
\w[.]{\Map_{\sigma_{n} Z}(X,~\Fib(\hoa_{n+1}))}

Thus, it suffices to identify the fiber instead as 
\w[.]{\Map_{\CInZ{J_{n+1}}\rest{J_{n+1}}}\,(X\rest{J_{n+1}},~\Fib(\omega_{n+1}))}
However, since \w{\hoa_{n+1}(i):Y_{i}\to\hsi_{n} Y(i)} is the identity for
\w[,]{i\in I_{n}} \  the diagram \w{\Fib(\hoa_{n+1}):I\to\cC} is trivial
(over $Z$) when restricted to \w[,]{I_{n}} and since $\cJ$ was
orderable, any map \w{f:X=\tau_{n+1}X\to\Fib(\hoa_{n+1})} is
determined uniquely by its restriction to \w{J_{n+1}} \ -- \ in fact,
to the discrete subcategory \w[.]{\hJ{n+1}:=J_{n+1}\setminus I_{n}}

The fact that $Y$ is an abelian group object in \w{\CIZ}
implies, by definition, that for each \w{i\in I} there is a commuting triangle:
%
%(8)
\mydiagram[\label{eeight}]{
Z_{i}  \ar[d]_{=} \ar[r]^{s_{i}} & Y_{i}  \ar[dl]^{q_{i}}\\ 
Z_{i}  & \  ,
}
\noindent natural in $I$. Thus \w{\Fib(\hoa_{n+1})(j)} for \w{j\in J_{n+1}} 
is by definition the pullback of:
\mydiagram{
 & Z_{j} \ar[d] \ar[rd]_{\sigma_{n}s_{j}\circ\omega_{Z}} \ar[rrd]^{\Id} & & \\
Y_{j}~\ar[r]^<<<<{\hoa}_<<<<{(\omega,q_{j})}~~ &
~~~~\hsi_{n} Y_{j}~~~~~~~~=~&
\sigma_{n} Y(j)\hspace*{3mm}\times_{\sigma_{n} Z(j)} & Z_{j}~,
}
\noindent and we readily check that this is the same as
\w[,]{\Fib(\omega_{n+1})(j)} which is the pullback of:
\mydiagram{
 & \sigma_{n} Z(j)\ar[d]^{\sigma_{n} s(j)} \\
Y_{j} \ar[r]^-{\omega_Y}& \sigma_{n} Y(j)  \ .
}
\end{proof}

\begin{mysubsect}{Directed indexing diagrams}
\label{sdid}

We shall now see how Proposition \ref{pone} applies when $\cJ$ is an
orderable cover of a directed indexing category $I$ (see \S \ref{sdirect}).

Recall that in the Reedy model category structure (cf.\ \S \ref{smodel})
on \w[,]{\CI} a map \w{f:X\to Y} is a fibration if and only if
%
%(3)
\begin{myeq}\label{ethree}
X_{j}~\xrightarrow{(f,p)}~Y_{j}\times_{\sigma_{n} Y(j)}~\sigma_{n} X(j)
\end{myeq}
\noindent is a fibration in $\cC$ for every \w{j\in\Obj I} with
\w[,]{\deg(j)=n+1} where \w{\sigma_{n}=\sigma_{I_{n}}} is the completion
at \w[.]{I_{n}} In \w{\CIZ} we must replace $\sigma_{n}$ by $\hsi_{n}$
(\S \ref{scompl}), of course.
\end{mysubsect}

%
%     Lemma 1: fibrations
%
\begin{lemma}\label{lone}
If $I$ is a directed indexing category, any Reedy fibrant
\w{Y\in\CIZ} is $\cJ$-fibrant for the fine cover of $I$
(\S \ref{emainex}).
\end{lemma}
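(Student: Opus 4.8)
The plan is to reduce the $\cJ$-fibrancy condition for the fine cover directly to the Reedy fibrancy condition, using the key fact (established in Example~\ref{emainex}) that for the fine cover the difference category $\hJ{n+1}=J_{n+1}\setminus I_{n}$ consists precisely of the objects of degree $n+1$, and $J_{n+1}$ is obtained from these by adjoining all maps into $I_{n}$. By Remark~\ref{assume} we may assume $I=I_{n+1}$, so that checking $\cJ$-fibrancy amounts to showing that the adjoint augmentation $\hoa_{n+1}:Y\to\hsi_{n}Y$ is a fibration in $\CI/\sigma_{n}Z$, where $\sigma_{n}=\sigma_{I_{n}}$ is completion at $I_{n}$.

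First I would observe that a fibration in the comma category $\CI/\sigma_{n}Z$ is detected by the underlying map in $\CI$, so it suffices to show $\hoa_{n+1}:Y\to\hsi_{n}Y$ is a Reedy fibration in $\CI$ (relative to $\sigma_{n}Z$). Next I would unwind the criterion \eqref{ethree}: the map $\hoa_{n+1}$ is a Reedy fibration iff for every $j$ with $\deg(j)=n+1$ the relative matching map
\[
Y_{j}~\longrightarrow~\hsi_{n}Y(j)\times_{\sigma_{n}(\hsi_{n}Y)(j)}\sigma_{n}Y(j)
\]
is a fibration in $\cC$. The crucial point is that for $i\in I_{n}$ the component $\hoa_{n+1}(i)$ is an \emph{identity} (since $\hsi_{n}Y$ agrees with $Y$ on $I_{n}$), so the latching/matching objects of $Y$ and of $\hsi_{n}Y$ at a top-degree object $j$ coincide; I would check that the codomain of the displayed map then collapses simply to $\hsi_{n}Y(j)=\hsi_{n}^{n+1}Y(j)$, i.e.\ to the $Z$-relative matching object of $Y$ at $j$. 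Hence the relative matching map of $\hoa_{n+1}$ at $j$ is nothing but the ordinary matching map $Y_{j}\to\hsi_{n}Y(j)$ that appears in the criterion \eqref{ethree} for $Y$ itself to be Reedy fibrant in $\CIZ$.

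Finally I would invoke the hypothesis that $Y$ is Reedy fibrant in $\CIZ$: by \S\ref{sdid} this says exactly that $Y_{j}\to Y_{j}\times_{\hsi_{n}Y(j)}\hsi_{n}X(j)$ — or in the absolute form, $Y_{j}\to\hsi_{n}Y(j)$ — is a fibration in $\cC$ for each $j$ of degree $n+1$, which is precisely what the previous step requires. Therefore $\hoa_{n+1}$ is a fibration for every $n$, so $Y$ is $\cJ$-fibrant for the fine cover, completing the proof.

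The main obstacle I anticipate is bookkeeping: carefully identifying the matching object $\sigma_{n}(\hsi_{n}Y)(j)$ with $\sigma_{n}Y(j)$ at a top-degree object $j$, and likewise $\hsi_{n}Y(j)$ with the $Z$-relative matching object of $Y$ at $j$, so that the relative matching map of $\hoa_{n+1}$ reduces exactly to the map governed by \eqref{ethree}. Both identifications hinge on the fact that, in a directed indexing category, the matching object at a degree-$(n+1)$ object only sees objects of strictly smaller degree (all of which lie in $I_{n}$), combined with the pullback formula \eqref{ezero} for completion in $\CIZ$; once this is set up the rest is formal.
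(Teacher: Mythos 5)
Your proposal is correct and follows essentially the same route as the paper: assume \w[,]{I=I_{n+1}} note that \w{\hoa_{n+1}} is the identity on \w[,]{I_{n}} observe that at a top-degree object $j$ the relative matching condition \eqref{ethree} for \w{\hoa_{n+1}} collapses to the single map \w{Y_{j}\to\hsi_{n}Y(j)=\sigma_{n}Y(j)\times_{\sigma_{n}Z(j)}Z_{j}} (because the matching objects of $Y$ and \w{\hsi_{n}Y} agree there), and this map is a fibration exactly because \w{q:Y\to Z} is a Reedy fibration. The only blemish is the garbled display in your final step (the formula with the spurious $X$), but the ``absolute form'' \w{Y_{j}\to\hsi_{n}Y(j)} you state immediately afterwards is the correct condition, matching the paper's argument.
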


\begin{proof}
Once again we assume \w{I=I_{n+1}} (\S \ref{assume}), so we must show
that \w{\hoa_{n+1}:Y\to\hsi_{n} Y} is a fibration in
\w[.]{\CI/\sigma_{n} Z} Since $\hoa_{n+1}$ is the identity for
\w[,]{j\in I_{n}} consider \w[.]{j\in\hJ{n+1}:=I_{n+1}\setminus I_{n}}
Since $Y$ is Reedy fibrant in \w[,]{\CIZ} \w{q:Y\to Z} is a Reedy fibration in
\w[,]{\CI} and since $\cJ$ is fine, this means that
$$
Y_{j}~\xra{(\omega_{n+1},q_{j})}~\sigma_{n}Y(j)\times_{\sigma_{n}Z(j)} Z_{j}~=~
\hsi_{n}Y(j)~=~\hsi_{n}Y(j)\times_{\hsi_{n}Y(j)}~\hsi_{n}Y(j)
$$
is a fibration in $\cC$ \ -- \ which shows that \wref{ethree}
indeed holds for each \w[.]{j\in I}
\end{proof}

%
%     Proposition 2:   J-fibrancy
%
\begin{prop}\label{ptwo}
Let \w{\cC=s\cA} for some $\fG$-sketchable category $\cA$  (\S \ref{ssketch}),
and let \w{\cJ=\{J_{\nu}\}_{\nu \in N}} be an orderable cover of
a directed indexing category $I$, with \w{Z \in \CI} Reedy fibrant. 
Then any abelian group object \w{Y\in\CIZ} is weakly equivalent to a
fibrant (objectwise) abelian group object which is $\cJ$-fibrant. 
\end{prop}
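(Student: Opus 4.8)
The plan is to reduce the statement to Lemma~\ref{lone} by replacing $Y$ with a Reedy-fibrant model, while retaining the abelian-group-object structure over $Z$. First I would recall that, for $\cC=s\cA$ with $\cA$ a $\fG$-sketchable category, the category of abelian group objects in $\CIZ$ is itself a category of simplicial objects over a variety of (graded) algebras: by the identification of $Z$-modules with abelian group objects over $Z$ (from \S\ref{ssketch}, following \cite[\S2]{QuiC} and \cite[\S3]{BecT}), it is the category $(s\cM_{Z})^{I}$ of $I$-diagrams of simplicial $Z$-modules. Since this is again a diagram category over a resolution model category, and $I$ is a directed (hence Reedy) indexing category, it carries a Reedy model structure, and we may functorially factor the map $Y\to *$ (or better, take a Reedy-fibrant replacement $Y\xra{\simeq}Y'$) \emph{within} the category of abelian group objects over $Z$. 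The first step is thus: produce such a Reedy-fibrant replacement $Y'$ that is still an abelian group object in $\CIZ$.

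The second step is to check that the forgetful functor from abelian group objects in $\CIZ$ to $\CIZ$ detects and preserves the relevant structure: namely, that a Reedy fibration of abelian group objects is a Reedy fibration of the underlying diagrams, and that a weak equivalence is a weak equivalence of underlying diagrams. This is where the $\fG$-sketchability hypothesis does real work: by the characterization recalled in \S\ref{ssketch}, a map $f:X\to W$ in $\cC$ is a fibration iff it is an epimorphism onto the basepoint component of $W$, so fibrancy is insensitive to the algebraic structure once one forgets down to graded groups; Reedy fibrancy is then checked objectwise via the matching maps \wref{ethree}, and the matching objects are computed by limits, which the forgetful functor preserves. Hence $Y'$, built as Reedy-fibrant in abelian group objects, is Reedy fibrant in $\CIZ$; in particular it is objectwise fibrant, so it is a fibrant (objectwise) abelian group object. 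I would also note that the structure maps $s_{i}:Z_{i}\to Y'_{i}$, $q_{i}:Y'_{i}\to Z_{i}$ of \wref{eeight} are automatically natural, since the replacement was performed in the category of objects over $Z$.

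The third and final step is to invoke Lemma~\ref{lone}: since $Y'$ is Reedy fibrant in $\CIZ$ and $I$ is a directed indexing category, $Y'$ is $\cJ$-fibrant for the \emph{fine} cover of $I$. To handle an \emph{arbitrary} orderable cover $\cJ=\{J_{\nu}\}_{\nu\in N}$, I would observe that the augmentation $\hoa_{n+1}:\tau_{n+1}Y'\to\hsi^{I_{n+1}}_{I_{n}}Y'$ appearing in Definition~\ref{dgood} only ever compares $Y'$ over consecutive filtration stages $I_{n}\subseteq I_{n+1}$ coming from $\cJ$; by the transitivity relations $\xi^{I}_{J}=\xi^{I}_{J'}\circ\xi^{J'}_{J}$ and $\tau^{I}_{J}=\tau^{J'}_{J}\circ\tau^{I}_{J'}$ of \S\ref{scompl}, this factors through the corresponding augmentations for the fine cover (whose filtration $I_{n}=\deg^{-1}(\le n)$ refines any orderable cover's filtration up to reindexing), and since each of those is a fibration in $\cC$ and fibrations are closed under composition and pullback, $\hoa_{n+1}$ is a fibration in $\CIn{I_{n+1}}/\hsi^{I_{n+1}}_{I_{n}}Z$ as required.

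The main obstacle I expect is the second step: carefully verifying that Reedy fibrancy computed in the category of abelian group objects over $Z$ coincides with Reedy fibrancy of the underlying diagram in $\CIZ$. This is plausible because matching objects are limits and the forgetful functor is a right adjoint, and because $\fG$-sketchability makes fibrations surjectivity conditions detectable after forgetting to graded groups, but making this precise — especially tracking the twist by $Z$ through the matching-object construction \wref[,]{ethree} with $\sigma_{n}$ replaced by $\hsi_{n}$ as in \S\ref{sdid} — requires a genuine (if routine) argument rather than a one-line citation.
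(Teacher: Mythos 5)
Your steps 1--2 (fibrant replacement carried out inside abelian group objects over $Z$, plus the check that the forgetful functor to $\CIZ$ preserves what is needed, using the $\fG$-sketchable characterization of fibrations and the section of \wref[)]{eeight} are reasonable and close in spirit to what the paper actually does. The genuine gap is step 3. Lemma \ref{lone} gives $\cJ$-fibrancy only for the \emph{fine} cover; your reduction from an arbitrary orderable cover to that case is not justified. The claim that the degree filtration ``refines any orderable cover's filtration up to reindexing'' is false in general: a single stage $J_{n+1}$ may add objects of several different degrees at once, and a stage $I_{n}=\fJ{n}$ need not be a union of degree strata (already for two disjoint arrows with $J_{1},J_{2}$ the two components). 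What has to be verified at an object $j\in I_{n+1}\setminus I_{n}$ is that $Y_{j}\to\hsi_{I_{n}}Y(j)$ is a fibration, where the completion is taken only over the cover stage $I_{n}$, not over all objects of lower degree; deducing this from the Reedy matching conditions is not a formal matter of ``composition and pullback''. When $J_{n+1}$ adds several objects with maps among them, one needs an induction over those intermediate objects, together with closure properties of the filtration (e.g.\ that maps out of objects of $I_{n+1}$ stay in $I_{n+1}$) which themselves have to be extracted from orderability; and since Definition \ref{dcover} does not require the stages to be full subcategories, the comparison between $\hsi_{I_{n}}Y(j)$ and the Reedy matching object can change shape entirely --- for instance, if a stage contains two objects $x,y$ but not the arrow $x\to y$ between them, the augmentation at $x$ becomes a graph-type map $Y_{x}\to Y_{x}\times_{Z_{y}}Y_{y}$, $u\mapsto(u,Y(\phi)u)$, which is not a fibration in general even for Reedy fibrant $Y$. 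So the implication ``Reedy fibrant $\Rightarrow$ $\cJ$-fibrant for every orderable cover'' is not available off the shelf: step 3 is essentially the content of the proposition for a general cover, and you assert it rather than prove it.

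By contrast, the paper never passes through Reedy fibrancy of $Y$ at all: it constructs $\bY$ by induction over the objects of the directed category, at each $j$ factoring the canonical map $\bY_{j}\to\hsi_{I_{n}}\bY(j)$ --- formed with respect to the \emph{given} cover's filtration --- as an acyclic cofibration followed by a fibration, and it uses the section $s:Z\to Y$ of \wref{eeight} and the epimorphism characterization of fibrations in the $\fG$-sketchable setting to keep the replacement an objectwise fibrant abelian group object over $Z$ throughout. In other words, the required fibrancy is imposed directly against the cover's own completions instead of being deduced from the fine-cover (Reedy) case. To repair your argument you would have to replace step 3 by essentially this induction, at which point the detour through Lemma \ref{lone} buys you nothing.
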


\begin{proof}
Because $I$ is directed, we may construct the desired $\cJ$-fibrant
replacement $\bY$ \ -- \ an abelian group object in \w{\CIZ} \ -- \
by induction on the degree of \w[.]{j\in I} 
Moreover, we assumed that $Z$ is Reedy fibrant, so in particular 
objectwise fibrant (see Remark \ref{rmodel}).  Note that any abelian
group object \w{p:V\to Z} in \w{\CIZ} is (objectwise) fibrant, since
$p$ has a section by \wref{eeight} and \S \ref{ssketch}; hence $p$ has the
right lifting property with respect to any acyclic cofibration.

We assume by induction on \w{\deg(j)=n+1} that both
\w{\bar\omega_{n+1}(j):\bY_{j}\to\hsi_{n} \bY(j)} and
\w{\bar{q}_{j}:\bY_{j}\to Z_{j}} are fibrations in $\cC$.  Since for 
each $j$, \w{\sigma_{n} Y(j)} is defined as a limit, and an
abelian group object structure on any $V$ is a map \w{V\times_{Z}\, V\to V} 
(over $Z$), by functoriality (and commutativity) of limits we see that 
\w{\sigma_{n} q:\sigma_{n} \bY \to\sigma_{n} Z} is an abelian group object, too \ -- \ 
so \w{\sigma_{n} q} is an objectwise fibration in \w[.]{\CI} But
$$
\xymatrix@R=25pt{
\hsi_{n} \bY_{j} \ar[r]^{\pi_{Z}} \ar[d] & Z_{j} \ar[d]\\
\sigma_{n} \bY(j)\ar[r]^{\sigma_{n} q} & \sigma_{n} Z(j)
}
$$
is a pullback square, by definition, so \w{\pi_{Z}} is a fibration
in $\cC$ by base change. 

In the induction step, for each $j$ of degree \w[,]{n+1} we factor:
$$
\bar{\hoa}_{j}:\bY_{j}~\to~\hsi_{n} \bY(j)~=~
\sigma_{n} \bY(j)\,\times_{\sigma_{n} Z(j)}~Z_{j}
$$
as 
$$
\bY_{j}~\hra ~\bY_{j}'~\xra{\bar{\omega}_{j}'}~\hsi\bY(j)
$$
(an acyclic cofibration followed by a fibration), and replace
\w{\bY_{j}} by \w[.]{\bY'_{j}} Both \w{\bar{\omega}'_{j}}  and
\w{\bar{q}_{j}:=\pi_{Z}\circ\bar{\omega}'_{j}:\bY_{j}\to Z_{j}} are then
fibrations in $\cC$, as required.
\end{proof}

\begin{remark}\label{rcgroup}
This actually works for some orderable covers of indexing categories
which are not directed. For example, if we use the fine cover $\cJ$ for
an indexing category $I$ constructed as in \S \ref{rgroup}, we can
still change any $Y$ into a $\cJ$-fibrant one by induction on the degree
in \w[,]{I'} since we have not introduced any new objects
\end{remark}

\begin{example}\label{ecgroup}
In Example \ref{egroup}, for any \w[,]{Y\in\CI} \w{\sigma Y} is given by:
$$
\sigma Y(j)~=~Y_{i}\times Y_{i} 
\raisebox{-0.6ex}
{$~~~\stackrel{\textstyle \longrightarrow}{\longrightarrow}~~~$}
Y_{i} ~=~\sigma Y(i)~,
$$
with horizontal maps \w{Y(\phi_{\pm1})} the two projections, and 
\w{f:\sigma Y(j)\to \sigma Y(j)} the switch map. To make this
$\cJ$-fibrant for the obvious (fine) cover, we just have to choose \w{\bY}
so that \w{\hoa:\bY_{j}\to \sigma \bY(j)} is a \ $\ZZ/2$-equivariant
fibration\vsn. 
\end{example}

\begin{mysubsect}{The dual construction}
\label{sdc}

The approach described above is clearly best suited to directed indexing
categories $I$ where the degree function is non-negative. In the inverse
case, the dual approach may be preferable:

Given a small indexing category $I$ and a subcategory $J$,
the truncation functor \w{\tau=\tau^{I}_{J}:\CI\to\CJ} also
has a left adjoint \w[,]{\zeta=\zeta^{I}_{J}:\CJ\to\CI}
which assigns to a diagram \w{X:J\to\cC} the diagram
\w{\zeta X:I\to\cC} with \w{\zeta X(i):=\colim_{J/i} X} for
each \w[.]{i\in I} We denote the resulting comonad on \w{\CI} by
\w[.]{\theta_J=\zeta_J\circ\tau_J} Note that if \w[,]{X\in\CIZ} then
\w{\theta_J X} comes equipped with a map to \w[,]{\theta_J Z\in\CIZ}
so we do not need the analogue of \wref[.]{ezero}

We then say that a diagram \w{X\in\CIZ} is $\cJ$-\emph{cofibrant}
for an orderable cover $\cJ$ if for each \w[,]{n\in \ZZ} the coaugmentation 
\w{\eta_{n+1}:\theta_{n}^{n+1}X=\theta_{I_{n}}^{I_{n+1}} X\to\tau_{n+1}X} 
is a cofibration in \w[.]{\CIn{I_{n+1}}/\tau_{n+1}Z} We then have:

%
%  Dual Proposition 1:   Identifying fibers
%
\begin{prop}\label{dualtower}
Assume \w{\cJ=\{J_{\nu}\}_{\nu \in N}} is an orderable cover of $I$,
\w{X\in\CIZ} is $\cJ$-cofibrant, and \w{Y\in\CIZ}
is a fibrant abelian group object. Then
$$
F_{n+1} \to \Map_{\CIn{I_{n+1}}/\tau_{n+1}Z}(\tau_{n+1}X,\tau_{n+1}Y)~
\xra{\rho_{n+1}}~\Map_{\CIn{I_{n}}/\tau_{n}Z}(\tau_{n}X,\tau_{n}Y)
$$
is a fibration sequence of simplicial abelian groups for each
\w[,]{n\in \ZZ} and the fiber \w{F_{n+1}} is
\w[.]{\Map_{\CInZ{J_{n+1}}\rest{J_{n+1}}}\,(\Cof(\eta_{n+1}),\,Y\rest{J_{n+1}})}

Here \w{\Cof(\eta_{n+1})} denotes the cofiber (over \w[)]{\tau_{n+1} Z} of the 
coaugmentation \w[.]{\eta_{n+1}:\theta_{n}^{n+1} X\to \tau_{n+1} X}
\end{prop}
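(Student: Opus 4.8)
The plan is to dualize the argument for Proposition \ref{pone} in the obvious way, using the comonad $\theta_J$ in place of the monad $\sigma_J$, and exploiting that $\Map_Z(-,Y)$ turns colimits in the source into limits. As in that proof, assume for simplicity that $I=I_{n+1}=\fJ{n+1}$, so that $\eta_{n+1}$ may be identified with its adjunct $\eta:\theta_n X\to X$ in $\CIn{I_{n+1}}/\tau_{n+1}Z=\CI/\tau_{n+1}Z$ (here I would insert a remark dual to Remark \ref{assume}). First I would invoke the adjunction $(\zeta_n,\tau_n)$ to obtain the natural isomorphism
\[
\Map_{\CIn{I_{n}}/\tau_{n}Z}(\tau_n X,\tau_n Y)~\cong~\Map_{\CI/\tau_n Z}(\theta_n X,\, Y)~,
\]
under which $\rho_{n+1}$ is identified with the map induced in $\Map(-,Y)$ by the coaugmentation $\eta:\theta_n X\to X$. (One must check that the various comma-category structures over $\tau_n Z$ match up, using that $\theta_n X$ carries a canonical map to $\theta_n Z$ which is compatible with the map to $Z$; this is the colimit analogue of the bookkeeping done with \wref{ezero} in \S\ref{scompl}, and since $\theta_n Z\to Z$ need not be anything special, the comparison is cleanest if one notes $\tau_n(\theta_n X)=\tau_n X$ and works with the evident map $\theta_n X\to X$ over $Z$.)

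Next, since $X$ is $\cJ$-cofibrant, $\eta_{n+1}$ is a cofibration in $\CIn{I_{n+1}}/\tau_{n+1}Z$; because $Y$ is a fibrant abelian group object, applying $\Map_Z(-,Y)$ to the cofibration $\eta_{n+1}$ yields a fibration of simplicial abelian groups
\[
\Map_Z(X,Y)~\xra{\rho_{n+1}}~\Map_Z(\theta_n X,Y)
\]
whose fiber is $\Map_Z(\Cof(\eta_{n+1}),Y)$, where $\Cof(\eta_{n+1})$ is the cofiber of $\eta_{n+1}$ formed over $\tau_{n+1}Z$. (Abelian-group-object structure on $Y$ is what makes the mapping space a simplicial abelian group and makes the cofiber sequence of sources induce a fibration sequence of mapping spaces, as in \S\ref{smaps}; alternatively one factors $\eta_{n+1}$ as an acyclic cofibration followed by a cofibration when $X$ itself is not yet cofibrant, but under the $\cJ$-cofibrancy hypothesis this is not needed.)

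It then remains to identify this fiber with $\Map_{\CInZ{J_{n+1}}\rest{J_{n+1}}}(\Cof(\eta_{n+1}),\,Y\rest{J_{n+1}})$, i.e.\ to see that a map out of $\Cof(\eta_{n+1})$ is determined by, and can be prescribed arbitrarily on, the restriction to $J_{n+1}$ (indeed to the discrete piece $\hJ{n+1}=J_{n+1}\setminus I_n$). This is the dual of the corresponding step in Proposition \ref{pone}: since $\eta_{n+1}(i):\theta_n X(i)\to X_i$ is an isomorphism for $i\in I_n$ (as $\theta_n$ is a completion that changes nothing on $I_n$), the diagram $\Cof(\eta_{n+1})$ is trivial over $Z$ when restricted to $I_n$, and orderability of $\cJ$ forces any map out of it to factor through $J_{n+1}$, with no constraint imposed from outside $J_{n+1}$. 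I expect this last identification to be the main obstacle — not because it is deep, but because it requires being careful about what ``cofiber over $\tau_{n+1}Z$'' means for a diagram that is pointed (by the composite $\tau_{n+1}Z\to\theta_n X\to X$) only relative to $Z$, and about the direction of the arrows in $\theta_n$ versus $\sigma_n$; the colimit over $J/i$ rather than $i/J$ means the ``trivial on $I_n$'' claim has to be checked on coaugmentations rather than augmentations. Everything else transports formally from the proof of Proposition \ref{pone} via the $(\zeta,\tau)$ adjunction.
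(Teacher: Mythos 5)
Your proposal is correct and follows exactly the route the paper intends: its entire proof of Proposition \ref{dualtower} is the single line ``Dual to that of Proposition \ref{pone}'', and your argument is precisely that dualization (the $(\zeta_n,\tau_n)$ adjunction identifying $\rho_{n+1}$ with the map induced by the coaugmentation $\eta_{n+1}$, the $\cJ$-cofibrancy giving a fibration of mapping spaces into the fibrant abelian group object $Y$, and the triviality of $\Cof(\eta_{n+1})$ on $I_n$ identifying the fiber over $J_{n+1}$). The bookkeeping point you flag about working over $Z$ is even addressed in the paper's \S\ref{sdc}, which notes that $\theta_J X$ comes with a canonical map to $Z$, so no analogue of \wref{ezero} is needed.
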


\begin{proof}
Dual to that of Proposition \ref{pone}
\end{proof}

Note that if $I$ is a directed indexing category, we need no special
assumptions on \w{X,Y\in\CIZ} (or $\cC$) in order for the dual of
Proposition \ref{ptwo} to hold, since all colimits are over $Z$ to
begin with.  Thus, we can again build $\cJ$-cofibrant
replacements by induction on degree to yield the following:  

%     Dual Proposition 2:   J-cofibrancy
%
\begin{prop}\label{ptwoagain}
Let \w{\cC=s\cA} for some $\fG$-sketchable category $\cA$,
and let \w{\cJ=\{J_{\nu}\}_{\nu \in N}} be an orderable cover of
a directed indexing category $I$. Then any \w{X\in\CIZ} is weakly
equivalent to a cofibrant object (with respect to the model structure
of \S \ref{smodel}(a)), which is $\cJ$-cofibrant.
\end{prop}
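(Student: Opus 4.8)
The plan is to dualize the proof of Proposition~\ref{ptwo}, where the dual turns out to be \emph{easier}: as recorded just before Proposition~\ref{dualtower}, when $X\in\CIZ$ the comonad value $\theta_J X$ already carries a canonical map to $Z$ (via the counit $\theta_J Z\to Z$), so there is no analogue of the pullback correction~\eqref{ezero}. Consequently no hypothesis on $\cC$ beyond cofibrant generation will be needed, and everything can be carried out in the model structure of~\S\ref{smodel}(a). Since $\cC=s\cA$ is cofibrantly generated, so is $\CIZ$ with that structure, so I would take $\hX$ to be a cofibrant replacement of $X$ there; and because $I$ is directed I would, exactly as in Proposition~\ref{ptwo}, build it by induction on degree, so as to retain control of the restrictions $\tau_n\hX$.

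Concretely, one processes the objects of $I$ cell by cell in order of decreasing degree. Suppose $\hX$ has been constructed on all objects of degree $>d$, together with an objectwise weak equivalence to the corresponding part of $X$. For each object $j$ of degree $d$, form the latching object $L_j\hX:=\colim\hX$ over the category of non-identity morphisms of $I$ with target $j$; every such morphism has source of strictly larger degree, so this colimit uses only the part already built, and — since we work over $Z$ — it comes with a map to $Z_j$ as well as to $X_j$. Factor the latter, over $Z_j$, as a cofibration $L_j\hX\hookrightarrow\hX_j$ followed by an acyclic fibration $\hX_j\xra{\sim}X_j$ in $\cC$, using the functorial factorizations of the cofibrantly generated $\cC$, and take $\hX_j$ for the value of $\hX$ at $j$. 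Assembling over all $j$ of degree $d$ passes from $\hX$ on degrees $>d$ to $\hX$ on degrees $\ge d$ by a pushout of a coproduct of free maps $F_j(L_j\hX)\to F_j(\hX_j)$, and since such an $F_j$ is supported on $j$ and on objects of smaller degree, the strictly higher degrees are untouched. Composing these attachments down the degree filtration exhibits $\hX$ as a cell complex, hence cofibrant in $\CIZ$ for the structure of~\S\ref{smodel}(a), with $\hX\to X$ an objectwise weak equivalence; in particular each $\hX_j$ is cofibrant in $\cC$.

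It then remains to verify $\cJ$-cofibrancy. Fix $n$. Because $I$ is directed, no object of $I_n$ maps by a non-identity morphism into an object $j\in I_{n+1}\setminus I_n$ (that would decrease degree from a value $\le n$ to one $>n$), so $\theta_n^{n+1}\hX(j)=\colim_{I_n/j}\hX$ is the initial object for such $j$, while $\theta_n^{n+1}$ restricts to the identity on $I_n$. Hence the coaugmentation $\eta_{n+1}\colon\theta_n^{n+1}\hX\to\tau_{n+1}\hX$ is the identity on $I_n$ and is the map out of the initial object $\emptyset\to\hX_j$ on each remaining object; as each such $\hX_j$ is cofibrant in $\cC$, this makes $\eta_{n+1}$ a cofibration in $\CIn{I_{n+1}}/\tau_{n+1}Z$. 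Thus $\hX$ is $\cJ$-cofibrant, and the construction is complete.

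The step most in need of care is the bookkeeping of the second paragraph: one must check that the degreewise factorizations really glue into a single cell complex over $Z$, so that $\hX$ is genuinely cofibrant in $\CIZ$ and not merely objectwise cofibrant. This is precisely dual to the effort in Proposition~\ref{ptwo} to keep the inductive replacement an abelian group object with all its matching maps fibrations at every stage; by contrast, the additional hypotheses on $\cC$ that were forced there by the correction~\eqref{ezero} are not needed here, exactly because all the relevant colimits are already formed over $Z$.
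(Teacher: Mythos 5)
Your construction of $\hX$ is exactly the degreewise induction the paper intends (the paper offers nothing beyond the sentence preceding the proposition): process objects by decreasing degree, factor the latching map $L_j\hX\to X_j$ over $Z_j$ as a cofibration followed by an acyclic fibration, and assemble by attaching free cells, so that $\hX$ is cofibrant for the structure of \S\ref{smodel}(a) and $\hX\to X$ is an objectwise equivalence. That part is fine. The verification of $\cJ$-cofibrancy, however, needs repair in two places. First, the fact that no object of $I_n$ maps to an object of $I_{n+1}\setminus I_n$ is true, but not for the reason you give: for a general orderable cover the filtration index is unrelated to the degree (objects of $I_n$ need not have degree $\le n$), so the parenthetical ``that would decrease degree from $\le n$ to $>n$'' is not an argument. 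The correct reason is orderability of $\cJ$: a morphism $i\to j$ with $i\in J_\mu$, $\mu\le n$, and $j$ lying only in $J_{n+1}$ would force $n+1\prec\mu$, contradicting the embedding of $(N,\prec)$ into $(\ZZ,\le)$. Likewise, your claim that $\theta_n^{n+1}$ restricts to the identity on $I_n$ requires that every $I$-morphism between objects of $I_n$ already lie in $I_n$; this holds for compatible (e.g.\ fine) covers and is implicitly used in the paper's Proposition \ref{pone} as well, but it is not automatic for an arbitrary orderable cover, so it should be invoked explicitly.

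Second, and more seriously: from ``$\eta_{n+1}$ is an isomorphism on $I_n$ and is $\emptyset\to\hX_j$ with $\hX_j$ cofibrant at the new objects'' you conclude that $\eta_{n+1}$ is a cofibration in $\CIn{I_{n+1}}/\tau_{n+1}Z$. That inference is only valid if ``cofibration'' is read objectwise, i.e.\ in the Reedy structure of \S\ref{smodel}(b) (which is in fact the reading under which Proposition \ref{dualtower} functions, with $Y$ Reedy fibrant). It is false in the projective structure of \S\ref{smodel}(a), in which you announce you are working throughout: already for $I=\{1\to 0\}$ with the fine cover, $\eta$ is the map $(\emptyset\to\hX_0)\to(\hX_1\to\hX_0)$, and the projective-cofibration condition at the object $0$ demands that the induced map $\hX_0\sqcup\hX_1\to\hX_0$ be a cofibration in $s\cA$ --- which fails whenever $\hX_1$ is nontrivial, and no replacement of $X$ within its weak equivalence class can remedy this, since the condition is invariant only up to isomorphism, not weak equivalence. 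So either state explicitly that $\cJ$-cofibrancy of the coaugmentations is meant in the objectwise (Reedy) sense, in which case your check is complete and matches the paper's intent, or the final step of your argument is a genuine gap.
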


\end{mysubsect}

%
%s3      The spectral sequences
%
\section{The two truncation spectral sequences}
\label{css}

As noted above, for a suitable model category $\cC$ and any indexing
category $I$, given \w{Z\in\CI} and \w{X,Y\in\CIZ} with $X$
cofibrant and $Y$ a fibrant abelian group object, the homotopy groups
of \w{\Map_{Z}(X,Y)} are the cohomology groups \w{H^{\ast}(X/Z,Y)}
(suitably indexed). Thus if $\cJ$ is some orderable cover of $I$ 
such that $Y$ is $\cJ$-fibrant, the homotopy spectral sequence 
for the tower of fibrations (cf.\ \cite[VII, \S 6]{GJarS}) of (fibrant)
simplicial sets \wref{eone} yields a spectral sequence with 
\w[.]{E^{2}_{k,n}~=~\pi_{k+n}\Fib(\rho_{n})~\Longrightarrow 
\pi_{k+n} \Map_{Z}(X,Y)}
To identify the \ $E^{2}$-term, we need the following:

\begin{defn}\label{drelc}
Consider an orderable cover \w{\cJ=\{I',J\}} of a diagram $I$
(where we have in mind \w[,]{I=I_{n+1}} \w[,]{I'=I_{n}}
and \w[).]{J=J_{n+1}} If $Y$ is an abelian group object in
\w{\CIZ} which is $\cJ$-fibrant, then we have a fibration sequence
\[
\Fib(\hoa)~\to~Y~\xra{\hoa}~\hsi Y~,
\]
of abelian group objects over $Z$, where $\hsi$ is the completion
at \w[.]{I'}

We define the \emph{relative cohomology} of the pair \w{(I,J)} to be 
the total left derived functor of \w[,]{\Hom_{\CJ/Z\rest{J}}(-,~\Fib(\hoa))} 
(into simplicial abelian groups), denoted by \w[.]{H(X/Z;\hoa)} 
In particular, the $i$-th \emph{relative cohomology group} for
\w{(I,J)} is  \w[.]{H^{i}(X/Z;\hoa):=\pi_{i}H(X/Z;\hoa)} 
\end{defn}

\begin{remark}\label{rindex}
Note that in most applications the abelian group object \w{Y\in\CIZ} 
will be an $n$-th dimensional Eilenberg-Mac Lane object (over $Z$), 
in which case it is customary to re-index the relative cohomology
groups so that \w[.]{H^{n}(X/Z;\hoa):=\pi_{0}H(X/Z;\hoa)}

Observe, however, that our setup allows $Y$ to consist of
Eilenberg-Mac Lane objects of varying dimensions, with the maps 
\w{Y(f)} representing cohomology operations. In this general setting,
no canonical re-indexing exists.
\end{remark}

\begin{fact}\label{frelc}
Given \w{I,J,I'} and \w{Y,Z} as above, for any (cofibrant)
\w{X\in\CIZ} there is a long exact sequence in cohomology
\begin{myeq}[\label{eqrelc}]
\to H^{i}((X/Z)\rest{J};\,\hoa)\to
H^{i}(X/Z;\,Y)\to H^{i}((X/Z)\rest{I'};Y\rest{I'})\to
H^{i+1}((X/Z)\rest{J};\,\hoa)\to
\end{myeq}
\end{fact}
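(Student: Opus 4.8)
The plan is to obtain \wref{eqrelc} by applying the homotopy long exact sequence to the fibration sequence of simplicial abelian groups underlying Definition \ref{drelc}, after passing to mapping spaces. First I would recall the setup: we have $\cJ=\{I',J\}$ with $I=I_{n+1}$, $I'=I_n$, $J=J_{n+1}$, and a $\cJ$-fibrant abelian group object $Y\in\CIZ$, giving (by Definition \ref{drelc}, via Definition \ref{dgood} and Remark \ref{assume}) a fibration
\[
\Fib(\hoa)~\to~Y~\xra{\hoa}~\hsi Y
\]
in $\CI/\sigma Z$, where $\hsi=\hsi_{I'}$ is the completion at $I'$. Since $X\in\CIZ$ is cofibrant, applying $\Map_{\sigma Z}(X,-)$ to this fibration (as in the proof of Proposition \ref{pone}) yields a fibration sequence of simplicial abelian groups
\[
\Map_{\sigma Z}(X,\Fib(\hoa))~\to~\Map_{Z}(X,Y)~\xra{\rho}~\Map_{\sigma Z}(X,\hsi Y)~.
\]

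Next I would identify the three terms with the cohomology groups named in \wref{eqrelc}. The middle term $\Map_{Z}(X,Y)$ computes $H^{\ast}(X/Z;Y)$ by the discussion in \S\ref{smaps}. For the base: by the adjunction isomorphism $\Map_{\CIn{I'}/\tau Z}(\tau X,\tau Y)\cong\Map_{\CI/\sigma Z}(X,\hsi Y)$ used in the proof of Proposition \ref{pone}, the term $\Map_{\sigma Z}(X,\hsi Y)$ is $\Map_{\CIn{I'}/\tau Z}(\tau X,\tau Y)$, whose homotopy groups are $H^{\ast}((X/Z)\rest{I'};Y\rest{I'})$. For the fiber: exactly as in the proof of Proposition \ref{pone}, any map $X\to\Fib(\hoa)$ over $\sigma Z$ is determined uniquely by its restriction to $J$ (indeed to $\hJ{n+1}$), since $\Fib(\hoa)$ is trivial over $Z$ on $I'$ and $\cJ$ is orderable; moreover one checks $\Fib(\hoa)\rest{J}=\Fib(\omega)$ there, so $\Map_{\sigma Z}(X,\Fib(\hoa))\cong\Hom_{\CJ/Z\rest{J}}(X\rest{J},\Fib(\hoa))$ at the derived level, whose homotopy groups are by definition $H^{\ast}((X/Z)\rest{J};\hoa)$. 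Taking the long exact homotopy sequence of the fibration sequence above and reindexing gives precisely \wref{eqrelc}, with the connecting homomorphism $H^{i}((X/Z)\rest{I'};Y\rest{I'})\to H^{i+1}((X/Z)\rest{J};\hoa)$.

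The main technical obstacle is the identification of the fiber term, i.e.\ verifying that the derived functor $H((X/Z)\rest{J};\hoa)$ of Definition \ref{drelc} really is computed by the mapping space $\Map_{\sigma Z}(X,\Fib(\hoa))$. This requires knowing that the strict isomorphism $\Map_{\sigma Z}(X,\Fib(\hoa))\cong\Map_{\CJ/Z\rest{J}}(X\rest{J},\Fib(\omega))$ (from the uniqueness-of-extension argument in Proposition \ref{pone}) is compatible with cofibrancy of $X$ restricted to $J$, so that the mapping space computes the correct total left derived functor; this is where the $\cJ$-fibrancy of $Y$ and the orderability of $\cJ$ are essential, and where one must be slightly careful about the fact that $\Fib(\hoa)$, while trivial over $Z$ on $I'$, is only a genuine fibrant target on $J$ via $\Fib(\omega)$. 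A secondary bookkeeping point is matching the degree shifts between the "geometric" indexing of homotopy groups of mapping spaces and the cohomological indexing, together with the re-indexing subtlety flagged in Remark \ref{rindex} when $Y$ has components in several dimensions; but this is purely formal once the three terms are correctly identified.
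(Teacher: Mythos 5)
Your argument is correct and is exactly the paper's (implicit) one: the Fact is the long exact homotopy sequence of the fibration sequence from Proposition \ref{pone} applied to the two-term cover \w[,]{\cJ=\{I',J\}} with the total space, base, and fiber identified via the adjunction and the fiber computation in that proof, and the fiber's homotopy groups read off as the relative cohomology of Definition \ref{drelc}. The paper states this as a Fact without a written proof precisely because it is this specialization, so no further comparison is needed.
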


\begin{thm}\label{tone}
For any simplicial model category $\cC$, directed indexing category $I$,
and diagrams \w[,]{Z:I\to\cC} \w[,]{X\in\CIZ}
abelian group object \w[,]{Y\in\CIZ} and left-orderable
cover $\cJ$ of $I$ there is a first quadrant spectral sequence with:
\[
E^{2}_{s,t}~=~H^{t+s}((X/Z)\rest{J_{t}};\,\hoa)
~\Longrightarrow H^{s+t}(X/Z;\,Y)
\]
and \w[.]{d^{2}:E^{2}_{s,t}\to E^{2}_{s-2,t+1}}
\end{thm}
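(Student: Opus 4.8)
The plan is to produce this spectral sequence as the homotopy spectral sequence of the tower of fibrations \wref{eone}, so the bulk of the work is already done by Propositions \ref{pone} and \ref{ptwo} and by identifying the fibers via Definition \ref{drelc}. First I would invoke Proposition \ref{ptwo}: replacing $Y$ by a weakly equivalent fibrant abelian group object that is $\cJ$-fibrant changes none of the cohomology groups in sight, so we may assume $Y$ itself is $\cJ$-fibrant; similarly we assume $X$ is cofibrant (over $Z$), so that $\Map_{Z}(X,Y)$ genuinely computes $H^{\ast}(X/Z;Y)$. Since $\cJ$ is left-orderable, its associated filtration $(I_{n})=(\fJ{n})$ is bounded above, so the tower \wref{eone} is eventually constant on the left (the maps $\hat\rho_{n}$ are isomorphisms for $n$ large), and \wref{etwo} identifies $\Map_{Z}(X,Y)$ with $\lim_{n} M_{n}$. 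By Proposition \ref{pone} this is a tower of fibrations of simplicial abelian groups, hence a tower of fibrant simplicial sets, so the homotopy spectral sequence of \cite[VII, \S 6]{GJarS} converges to $\pi_{\ast}\lim_{n}M_{n}=\pi_{\ast}\Map_{Z}(X,Y)=H^{\ast}(X/Z;Y)$.

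Next I would identify the $E^{2}$-term. By Proposition \ref{pone}, the fiber of $\rho_{n+1}:M_{n+1}\to M_{n}$ is $\Map_{\CInZ{J_{n+1}}\rest{J_{n+1}}}(X\rest{J_{n+1}},\Fib(\omega_{n+1}))$; and since $Y$ is an abelian group object, the proof there shows $\Fib(\omega_{n+1})=\Fib(\hoa_{n+1})$ as a diagram over $Z\rest{J_{n+1}}$. By Definition \ref{drelc}, applied to the orderable cover $\{I_{n},J_{n+1}\}$ of $I_{n+1}$, the homotopy groups of this mapping space are exactly the relative cohomology groups $H^{\ast}((X/Z)\rest{J_{n+1}};\hoa)$. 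Plugging $\pi_{k+n}\Fib(\rho_{n})=H^{k+n}((X/Z)\rest{J_{n}};\hoa)$ into the standard form of the tower spectral sequence, and relabeling $n\rightsquigarrow t$, $k\rightsquigarrow s$ so that the filtration index becomes the homological index $s$ and the internal cohomological degree becomes $t$, yields $E^{2}_{s,t}=H^{t+s}((X/Z)\rest{J_{t}};\hoa)\Longrightarrow H^{s+t}(X/Z;Y)$ with the differential $d^{2}:E^{2}_{s,t}\to E^{2}_{s-2,t+1}$ of a second-quadrant-type homotopy spectral sequence (same bidegree pattern as the Bousfield--Kan spectral sequence of a tower). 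First-quadrant-ness follows because $I$ is directed with, after a shift, non-negative degrees, so $J_{t}$ is empty (hence the $E^{2}$ term vanishes) for $t<0$, and $H^{t+s}$ vanishes for $t+s<0$; one should also record that the $d_{1}$-differential is the connecting map of the long exact sequence of Fact \ref{frelc}, which both pins down $E^{2}$ as written and identifies the edge homomorphisms.

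I expect the main obstacle to be bookkeeping rather than any conceptual difficulty: one must check that the reindexing of the homotopy spectral sequence of a tower into the $(s,t)$ bigrading above is done consistently (including convergence, which for a left-orderable cover is automatic since the tower is pro-constant on the left and has $M_{0}$ eventually on the right — if $\cJ$ is finite it is literally finite, and in general one should cite the $\lim^{1}$-vanishing or the eventual-constancy on both ends to get strong convergence), and that the relative cohomology of Definition \ref{drelc} is correctly matched to $\Fib(\omega_{n+1})$ under the adjunction isomorphism $\Map_{\CIn{I_{n}}/\tau_{n}Z}(\tau_{n}X,\tau_{n}Y)\cong\Map_{\CI/\sigma_{n}Z}(X,\hsi_{n}Y)$ used in the proof of Proposition \ref{pone}. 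A secondary point to verify carefully is that taking a $\cJ$-fibrant replacement of $Y$ really does not disturb the identification of $\Map_{Z}(X,Y)$ with the cohomology object — this is where cofibrancy of $X$ and fibrancy of the replacement are both used. None of these steps requires new ideas; the theorem is essentially the formal consequence of assembling \ref{pone}, \ref{ptwo}, \ref{drelc}, and \ref{frelc} and feeding the result into the tower-of-fibrations spectral sequence.
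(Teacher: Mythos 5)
Your proposal follows the paper's proof essentially verbatim: replace $Z$ by a Reedy fibrant diagram (a step you leave implicit), $X$ by a cofibrant object and $Y$ by a $\cJ$-fibrant abelian group object via Proposition \ref{ptwo}, then invoke Proposition \ref{pone} to make \wref{eone} a tower of fibrations and identify the homotopy groups of its fibers with the relative cohomology groups of Definition \ref{drelc}, the homotopy spectral sequence of the tower doing the rest. The one caveat is your claim that convergence is ``automatic'' for any left-orderable cover: the paper itself remarks right after Theorem \ref{tone} that convergence may fail for infinite covers without cohomological connectivity assumptions (it is automatic only when $\cJ$ is finite), but since the paper's own proof does not address convergence either, this does not change the fact that your argument is the same as theirs.
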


\begin{proof}
Replace $Z$ by a weakly equivalent Reedy fibrant diagram in 
\w[,]{\CI} then $X$ by a weakly equivalent cofibrant object in \w[,]{\CIZ}
and then using Proposition \ref{ptwo} to replace $Y$ by a weakly 
equivalent $\cJ$-fibrant abelian group object in \w[.]{\CIZ} 
Proposition \ref{pone} then implies that \wref{eone} is a tower of
fibrations, and the associated homotopy spectral sequence has the
specified relative cohomology groups as the homotopy groups of the
fibers (which are the \ $E^{2}$-term of the spectral sequence, in our
indexing). 
\end{proof}

The spectral sequence need not converge, in general, without some
cohomological connectivity assumptions on the subdiagrams (unless the
cover $\cJ$ is finite, of course).

\begin{remark}\label{rfine}
If $\cJ$ is the fine cover, the \ $E^{2}$-term simplifies to:
$$
E^{2}_{s,t}~=~\prod_{j\in\hJ{t}}~H^{t+s}(X_{j}/Z_{j},\hat{\phi}_{j})~,
$$
where \w{\hat{\phi}_j:Y_{j}\to \underset{\phi:j\to i}{\lim}\,Y_{i}} 
is the structure map.
\end{remark}

Using the approach of \S \ref{sdc}, we also obtain a dual spectral sequence:

\begin{thm}\label{ttwo}
For $\cC$, $I$, $Z$, $X$, and $Y$ as in Theorem \ref{tone}, and $\cJ$
right-orderable, there is a first quadrant spectral sequence with:
$$
E^{2}_{s,t}~=~H^{s+t}(\eta_{t};\,Y)
~\Longrightarrow H^{s+t}(X/Z;\,Y)~.
$$
\end{thm}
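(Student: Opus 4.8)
The plan is to mirror the proof of Theorem \ref{tone} verbatim, replacing every construction by its dual from \S \ref{sdc}. First I would apply the reduction steps in the opposite order dictated by the dual setup: replace $Z$ by a weakly equivalent Reedy fibrant diagram in $\CI$, then replace $Y$ by a weakly equivalent fibrant (objectwise) abelian group object in $\CIZ$ — this is automatic in the $\fG$-sketchable case by the observation in the proof of Proposition \ref{ptwo} that abelian group objects over $Z$ are already objectwise fibrant, but in the general simplicial model category case one simply takes a fibrant replacement through an abelian group object — and finally use Proposition \ref{ptwoagain} to replace $X$ by a weakly equivalent $\cJ$-cofibrant object with respect to the diagram model structure of \S \ref{smodel}(a). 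Since $I$ is directed, Proposition \ref{ptwoagain} applies with no hypotheses on $X$ beyond living in $\CIZ$, so there is no obstruction here.

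Next I would invoke Proposition \ref{dualtower}: with $X$ now $\cJ$-cofibrant and $Y$ a fibrant abelian group object, the tower
\[
\ldots \to \Map_{\CIn{I_{n+1}}/\tau_{n+1}Z}(\tau_{n+1}X,\tau_{n+1}Y) \xra{\rho_{n+1}} \Map_{\CIn{I_{n}}/\tau_{n}Z}(\tau_{n}X,\tau_{n}Y) \to \ldots
\]
(the dual of \wref{eone}, with $\Map_Z(X,Y)$ as its inverse limit by the evident analogue of \wref{etwo}) is a tower of fibrations of simplicial abelian groups, with $n$-th fiber $F_{n+1} = \Map_{\CInZ{J_{n+1}}\rest{J_{n+1}}}(\Cof(\eta_{n+1}),\,Y\rest{J_{n+1}})$. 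Because $\cJ$ is right-orderable, the filtration $\fJ{n} = \bigcup_{i\leq n} J_i$ exhausts $I$ from below, so the homotopy spectral sequence of this tower of fibrations (cf.\ \cite[VII, \S 6]{GJarS}) converges to $\pi_{\ast}\Map_Z(X,Y) = H^{\ast}(X/Z;Y)$, and is first-quadrant by the indexing conventions already fixed in the proof of Theorem \ref{tone}.

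Finally I would identify the $E^2$-term. By definition the homotopy groups of $F_{n+1}$ are computed by the total left derived functor of $\Hom_{\CJ/Z\rest{J}}(\Cof(\eta_{n+1}),\,Y\rest{J})$ — that is, they are the cohomology groups $H^{\ast}(\eta_{n+1};Y)$ of the coaugmentation, in the sense of "cohomology of a map (or pair)" advertised in Theorem B; one needs here only that $\Cof(\eta_{n+1})$ is cofibrant over $\tau_{n+1}Z$, which follows from $X$ being $\cJ$-cofibrant exactly as in Proposition \ref{dualtower}. Reindexing so that the fiber over the $t$-th stage contributes $E^2_{s,t} = \pi_{s+t}F_{t} = H^{s+t}(\eta_t;Y)$ gives the stated answer. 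The only point requiring care — and the step I expect to be the main obstacle to write cleanly rather than mathematically deep — is checking that the dual derived-functor formalism really does produce "the usual cohomology of a map" (one should either set up a dual of Definition \ref{drelc} and Fact \ref{frelc} for cofibers of the source, or observe directly that $\Map_{\CInZ{J}\rest{J}}(\Cof(\eta),\,Y\rest{J})$ presents precisely the relative mapping space fitting into the long exact sequence relating $H^{\ast}(\tau_{n+1}X/Z)$, $H^{\ast}(\theta_n^{n+1}X/Z)$, and the pair), and that the reduction steps preserve the relevant cofibrancy/fibrancy along the whole tower simultaneously rather than just objectwise.
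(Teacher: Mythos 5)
Your proposal is correct and follows essentially the same route as the paper, which offers no separate argument for Theorem \ref{ttwo} beyond the dual machinery you invoke: Proposition \ref{ptwoagain} to make $X$ $\cJ$-cofibrant, Proposition \ref{dualtower} to turn the truncation tower into a tower of fibrations with fibers $\Map_{\CInZ{J_{t}}\rest{J_{t}}}(\Cof(\eta_{t}),Y\rest{J_{t}})$, and the identification of their homotopy groups with $H^{\ast}(\eta_{t};Y)$ as in Remark \ref{rdual}. Your closing caveats (cofibrancy of the cofiber, convergence for an infinite right-orderable cover) are exactly the points the paper also leaves implicit, modulo its stated warning that convergence may fail without finiteness or connectivity hypotheses.
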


\begin{remark}\label{rdual}
Note that 
\w{H^{\ast}(\eta_{t};\,Y):=H^{\ast}(\Cof(\eta_{t})/Z\rest{J_{t}};\,Y)}
is just the usual cohomology of the map of diagrams 
\w{\eta_{t}:\theta_{t-1}^{t} X\to\tau_{s}X} (see \S \ref{sdc}).
This fits into the usual long exact sequence of a pair, 
dual to that of \wref[.]{eqrelc}

When $X$ is cofibrant, $Z$ and $Y$ are constant, and
\w{\colim_{I} X=\hocolim_{I} X} -- \ for example, when $I$ is a
partially ordered set, so \w{\colim_{I} X=\bigcup_{i\in I}\,X_{i}} -- \
then \w[,]{H^{\ast}(X/Z;\,Y)=H^{\ast}(\colim_{I} X/Z;\,Y)} and the dual
spectral seqeunce is simply the usual Mayer-Vietoris spectral sequence
for the cover $X$ of \w{\colim_{I} X} (cf.\ \cite[\S 5]{SegCC}, and
compare \cite[XII,\,4.5]{BKaH}, \cite[\S 10]{VogtHL}, and \cite{SlomS}).
\end{remark}

\begin{example}\label{egsquare}
Let $I$ be the commuting square as in Example \ref{squarefilter}:

Given a diagram of abelian group objects \w[,]{Y:I\to\cC} 
the successive fibers \w{\Fib(\omega_{n+1})} (see Proposition
\ref{pone}) are: 
\[
\xymatrix{
\Ker(Y(c))\cap\Ker(Y(d)) \ar[r]\ar[d] & 0 \ar[d] \\
0\ar[r] & 0 
}
\]
for \w[;]{\omega_{4}:Y=\tau_{4}Y\to\sigma_{3}Y}
\[
\xymatrix{
& \Ker(Y(b)) \ar[d] \\ 0 \ar[r] & 0
}
\]
for \w[;]{\omega_{3}:\tau_{3}Y\to\sigma_{2}Y}
\[\xymatrix{\Ker(Y(a)) \ar[r] & 0}\]
for \w[;]{\omega_{2}:\tau_{2}Y\to\sigma_{1}Y} and the single object
\w{Y_{1}} for \vsm \w[.]{\omega_{1}:\tau_{1}Y\to\sigma_{0}Y}

Thus the \ $E^{2}$-term for the spectral sequence consists of only
four non-trivial lines: 
\begin{myeq}[\label{eqsqur}]
E^{2}_{s,t}~\cong~\begin{cases} 
H^{s+4}(X_{4};~\Ker(Y(c))\cap\Ker(Y(d)))  & \text{if} \ t=4;\\
H^{s+3}(X_{3};~\Ker(Y(b)))               & \text{if} \ t=3;\\
H^{s+2}(X_{2};~\Ker(Y(a)))               & \text{if} \ t=2;\\
H^{s+1}(X_{1};~Y_{1})                   & \text{if} \ t=1;\\
0 & \text{otherwise.}
\end{cases}
\end{myeq}

If we had used the fine cover, by Remark \ref{rfine} we would instead have:
\[
E^{2}_{s,t}~\cong~\begin{cases} 
H^{s+3}(X_{4};~\Ker(Y(c))\cap\Ker(Y(d)))  & \text{if} \ t=3;\\
H^{s+2}(X_{3};~\Ker(Y(a)))\oplus H^{s+2}(X_{2};~\Ker(Y(b))) & \text{if} \ t=2;\\
H^{s+1}(X_{1};~Y_{1})                  & \text{if} \ t=1;\\
0 & \text{otherwise.}
\end{cases}
\]
\end{example}

\begin{remark}\label{rsquare}
The square can be thought of as a single morphism in the category of arrows, 
so that we could analyze it as in \cite[\S 4]{BJTurR}, where 
\w{H^{\ast}(X;Y)} is shown to fit into a long exact sequence with ordinary
cohomology groups in the remaining two slots. See \S \ref{scompar} below.
\end{remark}

%
%s4      An approach through local cohomology
%
\section{An approach through local cohomology}
\label{calc}

The towers of Section \ref{ctow} were constructed by covering a
given indexing category $I$ by truncated subcategories, obtained by
omitting successive initial (or terminal) objects. We now present
an alternative approach, using subcategories obtained by omitting
\emph{internal} objects of $I$. As we shall see, the resulting towers 
differ in nature from those considered above.

\begin{defn}\label{dgoodi}
An indexing category $I$ will be called \emph{strongly directed} if:
\begin{enumerate}
\renewcommand{\labelenumi}{\roman{enumi}.~~}
\item It is \emph{directed} in the sense of having no maps \w{f:i\to i}
but the identity.
\item It has a nonempty \emph{weakly initial} subcategory (necessarily
  discrete) consisting of all objects with no incoming maps, as well
  as a nonempty \emph{weakly final} subcategory consisting of all
  objects with no outgoing maps. 
\item It is \emph{locally finite} (that is, all \ $\Hom$-sets are finite).
\item $I$ (that is, its underlying undirected graph) is \emph{connected}.
\end{enumerate}
\end{defn}

\newcommand{\tupleis}[1][ ]{the tuple \w{\tuple} is admissible{#1}}

\begin{defn}\label{admit}
We refer to \w{\tuple} as \emph{admissible} if:
\begin{enumerate}
\renewcommand{\labelenumi}{(\alph{enumi})~~}
\item \ $\cC$ is a simplicial model category;
\item \ $I$ is \good[;]
\item \w{Z\in\CI} is Reedy fibrant (hence objectwise fibrant);
\item \w{X,Y\in\CIZ} with $X$ cofibrant and $Y$ a fibrant abelian
  group object. 
\end{enumerate}
\end{defn}

\begin{defn}\label{dtrans}
For any categories $\cC$ and $I$ and diagrams \w{Z\in\CI} and
\w[,]{X,Y\in\CIZ} the product of simplicial sets
$$
\trans{\CIZ}{X}{Y}~:=~\prod_{i\in I}\Map_{\cC/Z_{i}}(X_{i},Y_{i})~.
$$
will be called the \emph{space of discrete transformations} from $X$
to $Y$ over $Z$.

We shall generally abbreviate this to \w[.]{\trans{Z}{X}{Y}}
Note that these are maps of functors only for the discrete indexing
category \w[,]{I^{\delta}} with no non-identity maps.
\end{defn}

\begin{mysubsect}{The primary tower}
\label{spt}

In the spirit of Section \ref{ccd}, for any finite indexing category $I$ we
construct a finite sequence of full subcategories 
\begin{myeq}[\label{eqprimt}]
I_{1} \subset I_{2} \subset \dots I_{n}=I
\end{myeq}
of $I$, starting with \w[,]{I_{1}} whose objects are the weakly
initial and final sets. 

As before, this can be done in several ways (ultimately yielding
variant spectral sequences). In any case, we can refine \wref{eqprimt}
so that for each $k$, \w{I_{k-1}} is obtained from \w{I_{k}} by
omitting a single internal object \w{i_{k}} (where \emph{internal}
means that it is neither weakly initial nor weakly final).

If \w{\tuple} is admissible, the inclusions of categories
\w{\iota_{k-1}:I_{k-1}\hra I_{k}} induce a finite tower of simplicial
abelian groups:   
\begin{myeq}[\label{eqprimary}]
\Map_{\CInZ{I_{n}}}(X,Y)~\to\dotsc\to~
\Map_{\CInZ{I_{k}}}(X,Y)~\xra{\iota_{k-1}^{\ast}}~
\Map_{\CInZ{I_{k-1}}}(X,Y)~\to\dotsc~,
\end{myeq}
analogous to \wref[.]{eone}
\end{mysubsect}

\begin{mysubsection}{The auxiliary fibration}
\label{sauxf}
Unfortunately, \wref{eqprimary} is not, in general, a tower of
fibrations, so we cannot use it directly to obtain a useable spectral
sequence for the cohomology of a diagram. To do so, we must replace
it (up to homotopy) by a tower of fibrations, with 
\w{\Map_{Z}(X,Y)} as its homotopy inverse
limit. The resulting spectral sequence (abutting to the homotopy
groups of \w[),]{\Map_{Z}(X,Y)} will have the homotopy groups of the
homotopy fibers of the maps \w{\iota_{k}^{\ast}} as its  \ $E^{2}$-term. In
fact, instead of constructing the replacement directly, we make use of
the following observation: 

For any indexing category $I$ and diagrams \w[,]{X,Y:I\to\cC} the set
\w{\nat{\CI}{X}{Y}} of diagram maps (natural transformations) from $X$
to $Y$ fits into an equalizer diagram:
\begin{myeq}[\label{eqtwenty}]
\nat{\CI}{X}{Y}\hra \prod_{i\in I}\Hom_{\cC}(X_{i},Y_{i}) \equaliz
\prod_{i,j\in I}~\prod_{\eta\in\Hom_{I}(i,j)}~\Hom_{\cC}(X_{i},Y_{j})~.
\end{myeq}
\noindent Here the two parallel arrows map to each factor indexed
by \w{\eta:i\to j} in $I$ by the appropriate projection, followed by either
\w[,]{Y(\eta)_{\ast}:\Hom_{\cC}(X_{i},Y_{i})\to\Hom_{\cC}(X_{i},Y_{j})} or \
\w[,]{X(\eta)^{\ast}:\Hom_{\cC}(X_{j},Y_{j})\to\Hom_{\cC}(X_{i},Y_{j})}
respectively.

In the case where $Y$ is an abelian group object in \w{\CI} 
(or \w[),]{\CIZ} this describes \w{\nat{\CI}{X}{Y}} as the kernel of the
difference $\xi$ of the two parallel arrows. By considering mapping
spaces rather than \ $\Hom$-sets, we obtain a left-exact sequence of
simplicial abelian groups:
\begin{myeq}[\label{eqtwentyone}]
0\to\map(X,Y)\to~\trans{}{X}{Y}~\xra{\xi}
\prod_{i,j\in I}~\prod_{\eta:i\to j}~\map(X_{i},Y_{j})~,
\end{myeq}
\noindent and similarly for \w[.]{\Map_{Z}(X,Y)}

However, \wref{eqtwentyone} is not generally a fibration
sequence, except when the underlying graph of $I$ is a tree 
(the proof of \cite[Prop.~4.23]{BJTurR}, where $I$ consists of a single
map, generalizes to this case). 
Nevertheless, for \good indexing categories $I$ (Definition \ref{dgoodi}),
we can define a subspace \w{\lof{I}{X}{Y}} (see Definition \ref{dlof}) 
inside the right-hand space of \wref[,]{eqtwentyone} such that $\xi$
factors through a fibration $\Psi$ (see Lemma \ref{diffmap} below), and:
\begin{myeq}[\label{eqpsi}]
0\to\Map_{Z}(X,Y)~\to~\trans{Z}{X}{Y}~\xra{\Psi}~\lof{I}{X}{Y}
\end{myeq}
is thus a fibration sequence.

For such an $I$ we obtain an auxiliary tower:
\begin{myeq}[\label{eauxt}]
\lof{I_{n}}{X}{Y}~\xra{p_{n-1}}~\lof{I_{n-1}}{X}{Y}~\to\dotsc\to~
\lof{I_{2}}{X}{Y}~\xra{p_{1}}~\lof{I_{1}}{X}{Y}
\end{myeq}
(see \S \ref{nlof}). We shall show that the maps \w{p_{k}} are fibrations
(see Proposition \ref{ppsifib}), with a fiber which we identify
as \w{F_{k}:=\cH^{I_{k}}_{c}(X/Z,Y)} (cf.\ Definition \ref{dlocoh}).
\end{mysubsection}

\begin{mysubsection}{The auxiliary fibers}
\label{sauxfib}
Since all of these constructions will be natural, for each $k$ the inclusion of categories 
\w{i_{k-1}:I_{k-1}\hra I_{k}} will induce a commuting square of
fibrations:
\[
\xymatrix{
\trans{\CInZ{I_{k}}}{X}{Y} \ar[r]^-{\Psi_{k}} \ar[d]_{\pi_{k-1}} & 
\lof{I_{k}}{X}{Y} \ar[d]^{p_{k-1}} \\
\trans{\CInZ{I_{k-1}}}{X}{Y} \ar[r]^-{\Psi_{k-1}} & 
\lof{I_{k-1}}{X}{Y}~,
}
\]
where the left vertical map \w{\pi_{k-1}} is the projection onto the
appropriate factors. Thus we will have a homotopy-commutative diagram:
\mydiagram[\label{eqsquare}]{
\Fib(i_{k-1}^{\ast}) \ar[r] \ar[d] & 
\prod\limits_{i\in I_{k}\setminus I_{k-1}}\,\Map_{\cC/Z_{i}}(X_{i},Y_{i})
\ar[r] \ar[d] & 
\cH^{I_{k}}_{c}(X/Z,Y) \ar[d] \\
\Map_{\CInZ{I_{k}}}(X,Y) \ar[r] \ar[d]_{i_{k-1}^{\ast}} &
\trans{\CInZ{I_{k}}}{X}{Y} \ar[r]^-{\Psi_{k}} \ar[d]_{\pi_{k-1}} & 
\lof{I_{k}}{X}{Y} \ar[d]^{p_{k-1}} \\
\Map_{\CInZ{I_{k-1}}}(X,Y) \ar[r] & 
\trans{\CInZ{I_{k-1}}}{X}{Y} \ar[r]^-{\Psi_{k-1}} & 
\lof{I_{k-1}}{X}{Y} 
}
in which  all rows and columns are fibration sequences up to homotopy. 

Since the homotopy groups of \w{\Pi_{i}\Map_{\cC/Z_{i}}(X_{i},Y_{i})} are a
direct product of cohomology groups of the individual spaces in the
diagram $X$, the top row of \wref{eqsquare} allows us to identify the
successive homotopy fibers of maps of the primary tower \wref{eqprimary}  
in terms of those of the auxiliary tower \wref[.]{eauxt}
Taking \w[,]{k=n} we see also that \w{\Map_{Z}(X,Y)} is in fact the
homotopy limit of the primary tower.  
\end{mysubsection}

\begin{mysubsection}{A modified primary tower}
\label{smpt}
Using standard methods, we can change \wref{eqprimary} into a tower
with the same homotopy limit, but simpler  successive fibers:

For \w{1 \leq k \leq n}  we define
\w{q_{k}:\trans{Z}{X}{Y}\to\lof{I_{k}}{X}{Y}} to be the
composite fibration: 
$$
\trans{Z}{X}{Y}~\xra{\Psi_{I}}~\lof{I}{X}{Y}~
\xra{p_{k}\circ\dotsc\circ p_{n-1}}~\lof{I_{k}}{X}{Y}~,
$$
and denote the fiber of \w{q_{k}} by
\w[.]{\ctrans{\CInZ{I_{k}}}{I}{X}{Y}} 

The induced maps 
\w{r_{k}:\ctrans{Z}{I_{k}}{X}{Y}\to\ctrans{Z}{I_{k-1}}{X}{Y}} then fit
into a tower:
\begin{myeq}[\label{eqvarp}]
\ctrans{Z}{I_{n}}{X}{Y}~\xra{r_{n-1}}~ \dotsc ~\xra{r_{2}}~ 
\ctrans{Z}{I_{2}}{X}{Y}~\xra{r_{1}}~\ctrans{Z}{I_{1}}{X}{Y}~.
\end{myeq}
As in \S \ref{sauxfib}, we see that the homotopy fiber of \w{r_{k}} is
the loop space of the fiber \w{F_{k}:=\cH^{I_{k}}_{c}(X/Z,Y)} of
\w[,]{p_{k}} while the homotopy limit of \wref{eqvarp} is
\w[.]{\ctrans{Z}{I}{X}{Y}=\Map_{Z}(X,Y)} Therefore, if we take the
homotopy spectral sequence for the tower \wref[,]{eqvarp} rather than
that for \wref[,]{eqprimary} we get the same abutment, and a closely
related \ $E^{2}$-term.
\end{mysubsection}

\begin{defn}\label{dctrans}
For \w{\tuple} as above and $J$ a subcategory of $I$,
we denote by \w{\ctrans{\CIZ}{J}{X}{Y}=\ctrans{Z}{J}{X}{Y}} 
the sub-simplicial set of \w{\trans{Z}{X}{Y}} consisting of
transformations which are natural when restricted to $J$-diagrams. 
In other words, these are elements $\sigma$ of \w{\trans{Z}{X}{Y}}
which make
\mydiagram{
X_{i} \ar[d]_{\sigma_{i}} \ar[r]^{X(f)} & X_{j} \ar[d]^{\sigma_{j}}  \\
Y_{i} \ar[r]_{Y(f)} & Y_{j}
}
commute, for any morphism \w{f:i\to j} in $J$.
\end{defn}

For example, \w[,]{\ctrans{Z}{I_{1}}{X}{Y}} consists of those
transformations which are natural only with respect to morphisms of
maximal length. On the other hand, \w{\ctrans{Z}{I}{X}{Y}} is simply 
\w[.]{\Map_{Z}(X,Y)}

Note that any inclusion of subcategories \w{J' \to J} of $I$ induces
an injection of simplicial sets
\w[,]{r^{J}_{J'}:\ctrans{Z}{J}{X}{Y}\to\ctrans{Z}{J'}{X}{Y}} 
since any transformation natural over $J$ must be natural over the
subcategory \w[.]{J'}

\begin{lemma}\label{lctrans}
For \w{(I_{k})_{k=1}^{n}} as in \wref[,]{eqprimt} we can identify
\w{\ctrans{Z}{I_{k}}{X}{Y}} of \S \ref{smpt} with
\w[,]{\ctrans{\CIZ}{I_{k}}{X}{Y}} and 
\w{r_{k}:\ctrans{Z}{I_{k}}{X}{Y}\to\ctrans{Z}{I_{k-1}}{X}{Y}} with
\w[.]{r^{I_{k}}_{I_{k-1}}}
\end{lemma}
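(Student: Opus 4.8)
The plan is to show that the two descriptions of the fiber—the one coming from the composite fibration $q_k$ in \S\ref{smpt}, and the intrinsic one in Definition \ref{dctrans}—agree, and that under this identification the maps $r_k$ become the restriction maps $r^{I_k}_{I_{k-1}}$. First I would unwind the definition of $q_k$: it is the composite $\Psi_I$ followed by $p_k\circ\dotsc\circ p_{n-1}$, so its fiber consists of those discrete transformations $\sigma\in\trans{Z}{X}{Y}$ whose image under $\Psi_I$ lies in the fiber of $p_k\circ\dotsc\circ p_{n-1}\colon \lof{I}{X}{Y}\to\lof{I_k}{X}{Y}$ over the basepoint. The key point is to combine this with the defining fibration sequence \wref{eqpsi} applied at the level $I_k$: an element $\sigma$ lies in $\Map_{\CInZ{I_k}}(X,Y)$—i.e.\ is natural over all of $I_k$—precisely when its image $\Psi_k(\sigma)$ is the basepoint in $\lof{I_k}{X}{Y}$.

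Second, I would make the bridge between the two levels explicit using the commuting square of fibrations displayed at the start of \S\ref{sauxfib} (for each consecutive pair $I_{k-1}\hra I_k$, and then composed up to $I$), which shows that $\Psi_k$ is compatible with the projections $\pi$ and the maps $p$ via $\Psi$. Chasing this naturality, $\Psi_I(\sigma)$ lands in the basepoint-fiber of $p_k\circ\dotsc\circ p_{n-1}$ if and only if $\Psi_k$ of the appropriate restriction/projection of $\sigma$ is the basepoint in $\lof{I_k}{X}{Y}$, which by the previous paragraph is equivalent to $\sigma$ being a transformation that is natural when restricted to $I_k$-diagrams. But that is exactly the defining condition for membership in $\ctrans{\CIZ}{I_k}{X}{Y}$ of Definition \ref{dctrans}. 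This gives the identification $\ctrans{Z}{I_k}{X}{Y}\cong\ctrans{\CIZ}{I_k}{X}{Y}$ of simplicial sets.

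Third, for the identification of $r_k$ with $r^{I_k}_{I_{k-1}}$, I would observe that $r_k$ is by construction the map on fibers induced by the pair $(q_k, q_{k-1})$, i.e.\ by the composable fibrations $\lof{I}{X}{Y}\to\lof{I_k}{X}{Y}\to\lof{I_{k-1}}{X}{Y}$, and hence is simply the inclusion of a smaller fiber into a larger one inside the common total space $\trans{Z}{X}{Y}$. On the other hand $r^{I_k}_{I_{k-1}}$ is, by the remark following Definition \ref{dctrans}, precisely the inclusion $\ctrans{Z}{I_k}{X}{Y}\hookrightarrow\ctrans{Z}{I_{k-1}}{X}{Y}$ induced by $I_{k-1}\hra I_k$, again as subspaces of $\trans{Z}{X}{Y}$. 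Since both are the same subspace inclusion under the identification of the second paragraph, they coincide.

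The step I expect to be the main obstacle is the second one: making the compatibility of the $\Psi_k$'s across the tower genuinely precise. This requires knowing that the spaces $\lof{I_k}{X}{Y}$ and the fibration $\Psi$ are constructed sufficiently functorially in the indexing category (so that the square in \S\ref{sauxfib} really commutes on the nose, not just up to homotopy, when one wants to read off an equality of subspaces rather than a homotopy equivalence). If the construction of $\lof{-}{X}{Y}$ in Definition \ref{dlof} (not yet seen in this excerpt) is only homotopy-functorial, one would instead have to argue at the level of $\pi_0$ and lifts, which is messier; I would want to appeal to the strict naturality asserted at the opening of \S\ref{sauxfib} to keep the argument a clean diagram chase of subobjects of $\trans{Z}{X}{Y}$.
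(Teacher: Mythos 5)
Your argument is correct and is essentially the paper's own: the paper's terse proof (``follows from Definition \ref{dctrans}'') rests on exactly the kernel computation you spell out, namely that the strict commutativity of \wref{dilof} gives \w[,]{p^{I}_{I_k}\circ\Psi_I=\Psi_{I_k}\circ r} that Lemma \ref{diffmap} identifies \w{\Ker(\Psi_{I_k})} with the $I_k$-natural transformations, so the fiber of \w{q_k} is precisely \w[,]{\ctrans{\CIZ}{I_{k}}{X}{Y}} and that the maps \w{r_k} are then the evident subspace inclusions inside \w[,]{\trans{Z}{X}{Y}} i.e.\ the maps \w[.]{r^{I_{k}}_{I_{k-1}}} Your concern about the square only commuting up to homotopy is moot, since the paper constructs \w{\lof{-}{X}{Y}} and $\Psi$ strictly functorially and asserts that \wref{dilof} commutes on the nose.
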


\begin{proof}
Follows from Definition \ref{dctrans}.
\end{proof}

%
%s5      The Auxiliary Tower
%
\section{The Auxiliary Tower}
\label{cauxt}

Suppose \w{\tuple} is admissible. In order to construct the auxiliary
tower \wref[,]{eauxt} we need a number of definitions:

\begin{defn}\label{ddiags}
Assuming \w{\tuple} is admissible:
\begin{enumerate}
\renewcommand{\labelenumi}{\alph{enumi})\ }
\item For any composable sequence \w{\fdot} of $k$ non-identity
  morphisms in $I$ (i.e., a $k$-simplex of the reduced nerve of $I$,
  \w[,]{\cN(I)} where identities are excluded) its \emph{diagonal}
  mapping space is 
$$
\mfd:=\Map_{Z_{t(f_{k})}}(X_{s(f_{1})},Y_{t(f_{k})})~,
$$
In particular, for \w{f:a\to b} in $I$ we have
\w[.]{\mf:=\Map_{Z_{b}}(X_{a},Y_{b})}
\item For each \w[,]{k\geq 1} let 
\w[.]{\displaystyle \diagk{Z}{X}{Y}{k}~:=~
\prod_{\fdot \in\,\cN(I)_{k}}~\mfd}
In particular, we denote \w{\diagk{Z}{X}{Y}{1}=\prod_{f \in I}~\mf}
by\w[.]{\diag{Z}{X}{Y}} 
\item  Any map into the product \w{\diagk{Z}{X}{Y}{k}} is defined
by specifying its projection onto each factor  \w[,]{\mfd} 
indexed by \w[.]{\fdot \in \cN(I)_k}

In particular, we have two maps of interest
\w[:]{\diagk{Z}{X}{Y}{k-1} \to \diagk{Z}{X}{Y}{k}} 
\begin{enumerate}
\renewcommand{\labelenumii}{(\roman{enumii})\ }
\item \w[,]{X^{\ast}}  for which the \ $\fdot$-component is the composite
\[
\diagk{Z}{X}{Y}{k-1}\xra{\proj}
\mdiag{f_{2},\dotsc,f_{k}}
\xra{X(f_{1})^{\ast}} \mfd~.
\]
\item \w[,]{Y_{\ast}}  for which the \ $\fdot$-component is the composite
\[
\diagk{Z}{X}{Y}{k-1}\xra{\proj}
\mdiag{f_{1},\dotsc,f_{k-1}}
\xra{Y(f_{k})_{\ast}} \mfd
\]
\end{enumerate}
\item  By iterating the maps
\w{\Phiof{1}{}:=Y_{\ast}+X^{\ast}:\diagk{Z}{X}{Y}{k-1}\to\diagk{Z}{X}{Y}{k}} 
for various \w{k>1} we obtain maps:
\[
\Phiof{j}{}:\diagk{Z}{X}{Y}{k} \to \diagk{Z}{X}{Y}{k+j}
\]
 for each \w[.]{j \geq 1} Setting
 \w[,]{\Phiof{0}{}:=\Id:\diagk{Z}{X}{Y}{1}\to\diagk{Z}{X}{Y}{1}}
 we may combine these to define:
\[
\Phis{}:\diag{Z}{X}{Y}\to \prod_{k=1}^{n}~\diagk{Z}{X}{Y}{k}~.
\]
For any \w[,]{\fdot\in\cN(I)_{k}} we write \w{\Phis{\fdot}} for $\Phis{}$ 
composed with the projection onto \w[.]{\mfd}
\item For any \w[,]{\fdot=(f_{1},\dotsc,,f_{k})\in\cN(I)_{k}} 
let \w{\comp{\fdot}:=f_{k}\circ f_{k-1}\circ\dotsc\circ f_{1}} 
denote the composition in $I$. We then have  a map 
\w[,]{\kappa_{\fdot}:\prod_{k=1}^{n}\diagk{Z}{X}{Y}{k}\to\mdiag{\comp{\fdot}}}
which is just the projection onto 
\w[.]{\mfd \xra{=} \mdiag{\comp{\fdot}}}
\end{enumerate}
\end{defn}

\begin{remark}\label{rphis}
If \w[,]{(g,f)\in\cN(I)_{2}} is a composable pair in $I$,
then by definition of $\Phi$ we have 
\[
\Phis{(g,f)}=Y(f)\circ\Phis{g}+\Phis{f}\circ X(g)~.
\]
More generally, if \w{\hdot=(\gdot,\fdot)\in\cN(I)_{k+j}} is 
the concatentation of \w{\gdot\in\cN(I)_{k}} and 
\w[,]{\fdot\in\cN(I)_{j}} then:
\begin{myeq}[\label{eqformphi}]
\Phis{(\gdot,\fdot)}~=~Y(\comp{\fdot})_{\ast}\Phis{\gdot}~+~
X(\comp{\gdot})^{\ast}\Phis{\fdot}~.
\end{myeq}

Note also that 
\[
(Y_{\ast}+X^{\ast}) \circ (Y_{\ast}+X^{\ast})~=~
Y_{\ast}Y_{\ast}+Y_{\ast}X^{\ast}+X^{\ast}X^{\ast}:\diagk{Z}{X}{Y}{k}\to 
\diagk{Z}{X}{Y}{k+2}
\]
and so inductively:
\begin{myeq}[\label{eqphi}]
\Phiof{j}{}=(Y_{\ast}+X^{\ast})^j~=~
\Sigma_{i=0}^j (Y_{\ast})^{j-i}(X^{\ast})^i:\diagk{Z}{X}{Y}{k}\to 
\diagk{Z}{X}{Y}{k+j}.
\end{myeq}
\end{remark}

\begin{defn}\label{dlof}
Let \w{K_{I}} denote the indexing category with
\begin{enumerate}
\renewcommand{\labelenumi}{$\bullet$ \ }
\item  objects: \  $\bz$, $\bo$, and \w[,]{\Arr(I):=\cN(I)_{1}} 
\item morphisms: \  one arrow \w[,]{\phi:\bz\to\bo}
and an arrow \w{k_{\fdot}:\bo\to \comp{\fdot}\in\Arr(I)} for each  
\w[.]{\fdot\in\cN(I)} 
\end{enumerate}

If \w{\tuple} is admissible, define a diagram of simplicial abelian
groups \w{V_{I}:K_{I}\to s\cA} by setting \w[,]{V_{I}(\bz)=\diag{Z}{X}{Y}} 
\w[,]{V_{I}(\bo)=\prod_{k=1}^{n}~\diagk{Z}{X}{Y}{k}} and 
\w[,]{V_{I}(f)=\mf} with
\w{V_{I}(\phi)=\Phis{}} and \w[.]{V_{I}(k_{\fdot})=\kappa_{\fdot}}
Then set \ \w[.]{\lof{I}{X}{Y}:=\lim_{K_{I}}~V_{I}}

This limit can be described more concretely as follows:
write \w{\indec{I}} for the collection of indecomposable maps in $I$, 
and let \w{\hlof{I}{X}{Y}} denote the subspace of
\w{\prod_{f \in \indec{I}}~\mf} consisting of tuples \w{\phidot}
satisfying 
\begin{myeq}[\label{eqlof}]
\sum_{i=0}^k Y(f_{k} \circ \dots \circ f_{i+1})\varphi_{f_{i}}X(f_{i-1}
\circ \dots \circ f_{1})~=~
\sum_{i=0}^l Y(g_{l} \circ \dots \circ g_{i+1})\varphi_{g_{i}}X(g_{i-1}
\circ \dots \circ g_{1})
\end{myeq}
whenever \w[.]{\comp{\fdot}=\comp{\gdot}}
\end{defn}

\begin{lemma}\label{llof}
The simplicial abelian group \w{\lof{I}{X}{Y}} is isomorphic to 
\w[.]{\hlof{I}{X}{Y}}
\end{lemma}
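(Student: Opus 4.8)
The plan is to unwind the limit defining $\lof{I}{X}{Y}=\lim_{K_{I}}V_{I}$ and recognize it as a subspace of $\diag{Z}{X}{Y}$ cut out by ``cocycle'' conditions, then observe that such a cocycle is freely determined by its values on indecomposable maps. First I would describe a point of $\lim_{K_{I}}V_{I}$ concretely: it is a compatible family consisting of some $v\in V_{I}(\bz)=\diag{Z}{X}{Y}$, an element of $V_{I}(\bo)=\prod_{k=1}^{n}\diagk{Z}{X}{Y}{k}$, and elements $u_{f}\in\mf$ for $f\in\Arr(I)$, subject to compatibility along $\phi$ and along the $k_{\fdot}$. Compatibility with $\phi$ forces the $\bo$-component to equal $\Phis{}(v)$, and since each $V_{I}(k_{\fdot})=\kappa_{\fdot}$ is a projection, compatibility with $k_{(f)}$ for the length-one sequences forces $u_{f}=v_{f}$ (using $\Phiof{0}{}=\Id$). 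Hence the whole family is determined by $v$, and the surviving conditions $\kappa_{\fdot}(\Phis{}(v))=v_{\comp{\fdot}}$, $\fdot\in\cN(I)$, say exactly that
\[
\Phis{\fdot}(v)=v_{\comp{\fdot}}\qquad\text{for every }\fdot\in\cN(I).
\]
So $\lof{I}{X}{Y}$ is canonically the sub-simplicial abelian group of $\diag{Z}{X}{Y}$ of all such $v$.

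Next I would restrict to indecomposable maps. By \eqref{eqphi} (equivalently Remark \ref{rphis}), when $\fdot=(f_{1},\dots,f_{k})$ consists of indecomposables, $\Phis{\fdot}(v)$ is precisely the left-hand side of \eqref{eqlof} and involves only the coordinates $v_{f}$ with $f$ indecomposable. Thus the projection $\alpha\colon v\mapsto\phidot:=(v_{f})_{f\in\indec{I}}$ sends the subspace above into $\hlof{I}{X}{Y}$: whenever $\comp{\fdot}=\comp{\gdot}$ with $\fdot,\gdot$ indecomposable, both sides of \eqref{eqlof} equal the common value $v_{\comp{\fdot}}$. Conversely, given $\phidot\in\hlof{I}{X}{Y}$, I would set $\beta(\phidot)_{c}:=\Phis{\fdot}(\phidot)$ for an arbitrary factorization $c=\comp{\fdot}$ of a non-identity map $c$ into indecomposables; such a factorization exists ($I$ being finite), and the value is independent of it precisely by \eqref{eqlof}, while $\beta(\phidot)_{c}=\varphi_{c}$ when $c$ is itself indecomposable. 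The maps $\alpha$ and $\beta$ are visibly mutually inverse on indecomposable coordinates, and both are maps of simplicial abelian groups since they are assembled from the $Y(\cdot)_{\ast}$, $X(\cdot)^{\ast}$ and projections; so the only thing left to check is that $\beta(\phidot)$ actually lies in $\lof{I}{X}{Y}$, i.e.\ that $\Phis{\fdot}(\beta(\phidot))=\beta(\phidot)_{\comp{\fdot}}$ holds for \emph{every} $\fdot\in\cN(I)$, including those with decomposable entries.

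That last verification is the main obstacle, and I would handle it by induction on length, proving: for any $\fdot=(f_{1},\dots,f_{k})\in\cN(I)$ and any choice of factorization of each $f_{i}$ into a string $\fdot^{(i)}$ of indecomposables, $\Phis{\fdot}(v)=\Phis{(\fdot^{(1)},\dots,\fdot^{(k)})}(\phidot)$, where $v=\beta(\phidot)$. The base case $k=1$ is the definition of $\beta$. For the step, apply the concatenation identity \eqref{eqformphi} with $\gdot=(f_{1})$ and tail $(f_{2},\dots,f_{k})$ to $v$, and with $\gdot=\fdot^{(1)}$ and tail $(\fdot^{(2)},\dots,\fdot^{(k)})$ to $\phidot$; the two resulting expressions agree term-by-term using $\Phis{(f_{1})}(v)=v_{f_{1}}=\Phis{\fdot^{(1)}}(\phidot)$ and the inductive hypothesis applied to the tail. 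Taking a full refinement of $\fdot$ then gives $\Phis{\fdot}(v)=\Phis{(\text{refinement})}(\phidot)=v_{\comp{\fdot}}$ by the definition of $v$, as needed. Everything else is a direct unravelling of the limit $\lim_{K_{I}}V_{I}$; the only real bookkeeping is keeping the concatenation formula \eqref{eqformphi} straight in this induction.
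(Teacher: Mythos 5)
Your proof is correct and follows essentially the same route as the paper: the paper's (one-line) argument is precisely that the limit condition, via the recursion \eqref{eqformphi}, determines the value on any decomposable morphism from the values on indecomposables. Your write-up simply makes explicit the unwinding of $\lim_{K_{I}}V_{I}$ and the converse (extension) direction, which the paper leaves implicit, using the same length induction on \eqref{eqformphi}.
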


\begin{proof}
The limit condition for \w{\varphi \in \lof{I}{X}{Y}} implies that the
value of \w{\varphi_{f}} for any decomposable $f$ is uniquely
determined by the values of \w{\varphi_{f_{i}}} for \w{f_{i}}
indecomposable, by the recursive formula \wref[.]{eqformphi}
\end{proof}

\begin{remark}\label{include}
As a consequence of the previous lemma, for (full)
subcategories \w{J \subset I} we have natural inclusion
maps \w[.]{i_J:\lof{J}{X}{Y} \to\prod_{f \in \indec{J}} \mf}
\end{remark}

We now investigate the properties of \w{\lof{I}{X}{Y}} and its associated
fibrations. First, note that there are two maps 
\w[,]{X^{\ast},Y_{\ast}:\trans{Z}{X}{Y}\to\diag{Z}{X}{Y}} which project to precomposition and postcomposition respectively on appropriate factors and we show:

\begin{lemma}\label{diffmap}
The difference map 
\w{\xi:=Y_{\ast}-X^{\ast}:\trans{Z}{X}{Y}\to\diag{Z}{X}{Y}} 
factors through a map \w{\Psi:\trans{Z}{X}{Y}\to\lof{I}{X}{Y}} 
with kernel \w[.]{\Map_{Z}(X,Y)}
\end{lemma}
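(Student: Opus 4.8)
The plan is to verify that $\xi = Y_\ast - X^\ast$ lands in the subspace $\lof{I}{X}{Y} \subseteq \diag{Z}{X}{Y}$, using the description of that subspace from Lemma~\ref{llof} as tuples $\phidot$ satisfying the compatibility relations \wref{eqlof}, and then to identify the kernel. First I would recall that the category $K_{I}$ defining $\lof{I}{X}{Y}$ has the map $\Phis{}$ from $V_{I}(\bz)=\diag{Z}{X}{Y}$ to $V_{I}(\bo)=\prod_{k}\diagk{Z}{X}{Y}{k}$ built by iterating $Y_{\ast}+X^{\ast}$, and that the two structure maps $\kappa_{\fdot}$ into each arrow object $\comp{\fdot}$ are projections. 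So a point of $\trans{Z}{X}{Y}$ maps to $\lof{I}{X}{Y}$ precisely when the element $\xi(\sigma)\in\diag{Z}{X}{Y}$, after applying $\Phis{}$, has equal projections under any two $\kappa_{\fdot}$, $\kappa_{\gdot}$ with $\comp{\fdot}=\comp{\gdot}$. Equivalently, in the indecomposable-coordinate description, the tuple $(\xi(\sigma)_{f})_{f\in\indec{I}}$ must satisfy \wref{eqlof}.

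The key computation is therefore: for a composable chain $\fdot=(f_{1},\dots,f_{k})\in\cN(I)_{k}$, the $\fdot$-component of $\Phis{}\circ\xi$ applied to $\sigma\in\trans{Z}{X}{Y}$ is a telescoping sum. Using \wref{eqphi}, $\Phiof{k-1}{}=\sum_{i}(Y_{\ast})^{k-1-i}(X^{\ast})^{i}$, so the $\fdot$-coordinate of $\Phis{}(\xi(\sigma))$ works out to
\[
\sum_{i=0}^{k-1} Y(f_{k}\circ\dots\circ f_{i+2})\bigl(\xi(\sigma)_{f_{i+1}}\bigr)X(f_{i}\circ\dots\circ f_{1})~.
\]
Now substitute $\xi(\sigma)_{f} = Y(f)\circ\sigma_{s(f)} - \sigma_{t(f)}\circ X(f)$ into each term: each summand splits into a ``$Y$-part'' $Y(f_{k}\circ\dots\circ f_{i+1})\sigma_{s(f_{i+1})}X(f_{i}\circ\dots\circ f_{1})$ and a ``$X$-part'' $Y(f_{k}\circ\dots\circ f_{i+2})\sigma_{t(f_{i+1})}X(f_{i+1}\circ\dots\circ f_{1})$, and since $t(f_{i+1})=s(f_{i+2})$, the $X$-part of the $i$-th term cancels the $Y$-part of the $(i{+}1)$-st term. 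The sum telescopes to $Y(\comp{\fdot})\sigma_{s(f_{1})} - \sigma_{t(f_{k})}X(\comp{\fdot})$, which depends only on the composite $g:=\comp{\fdot}$ (it equals $\xi(\sigma)_{g}$, read in the arrow $\mdiag{g}$). Hence the two $\kappa_{\fdot}$-projections agree whenever $\comp{\fdot}=\comp{\gdot}$, so $\xi(\sigma)$ indeed satisfies \wref{eqlof}, giving the factorization $\xi = i_{I}\circ\Psi$ for a well-defined $\Psi:\trans{Z}{X}{Y}\to\lof{I}{X}{Y}$.

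For the kernel statement: a transformation $\sigma$ lies in $\Ker(\Psi)$ iff $\xi(\sigma)=0$, i.e.\ $\xi(\sigma)_{f}=0$ for all $f\in\indec{I}$ (the decomposable components vanish automatically once the indecomposable ones do, by the recursion), which says exactly that $Y(f)\circ\sigma_{s(f)}=\sigma_{t(f)}\circ X(f)$ for every indecomposable arrow, hence for every arrow of $I$. By the left-exact equalizer description \wref{eqtwentyone} of $\map(X,Y)=\Map_{Z}(X,Y)$, this is precisely the condition that $\sigma$ be a natural transformation over $Z$, so $\Ker(\Psi)=\Map_{Z}(X,Y)$. I would also remark that since $\lof{I}{X}{Y}$ is a limit of a diagram of simplicial abelian groups and $\Psi$ is a map of simplicial abelian groups, $\Ker(\Psi)$ is meant here in the categorical (degreewise) sense, matching the left-exact sequence \wref{eqtwentyone}; no model-categorical input is needed for this lemma.

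The main obstacle I anticipate is purely bookkeeping: keeping the composition conventions straight (source/target ordering, and which of $X^{\ast}$, $Y_{\ast}$ acts on which end of a chain) so that the telescoping sign cancellation in $\Phis{}\circ\xi$ comes out cleanly, and checking that the ``reading $\xi(\sigma)_{g}$ in $\mdiag{g}$'' identification is compatible with the identity map $\mfd\xra{=}\mdiag{\comp{\fdot}}$ used to define $\kappa_{\fdot}$ in Definition~\ref{ddiags}(e). There is no genuine difficulty beyond this; everything reduces to the formal identity \wref{eqformphi} / \wref{eqphi} for $\Phi$ together with the coherence of the mapping-space pre- and post-composition operations. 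I would present the telescoping computation for a general $k$-chain once, then invoke it for both $\fdot$ and $\gdot$ to conclude \wref{eqlof}.
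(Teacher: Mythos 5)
Your proposal is correct and follows essentially the same route as the paper's proof: you verify the factorization by the telescoping of $\Phiof{j}{}=(Y_{\ast}+X^{\ast})^{j}$ applied to $\xi(\sigma)$, showing each $\fdot$-component reduces to $Y(\comp{\fdot})\sigma_{s}-\sigma_{t}X(\comp{\fdot})$ and hence depends only on the composite, so $\xi$ factors through the limit $\lof{I}{X}{Y}$; and you identify the kernel as exactly the transformations natural with respect to every arrow, which are natural over $Z$ since all components already live over $Z$. Writing out the telescoping sum explicitly and phrasing the limit condition via the concrete description of Lemma \ref{llof} are only presentational differences from the paper's argument.
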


\begin{proof}
Note that the sum \wref[,]{eqphi} applied to an element in the image of the 
difference map 
\[
Y_{\ast}-X^{\ast}:\trans{Z}{X}{Y}\to\diag{Z}{X}{Y}~,
\]
is telescopic, so we are left with: \w[.]{(Y_{\ast})^k-(X^{\ast})^{k}}
Since $X$ and $Y$ are in \w[,]{\CI} for any \w{\fdot\in\cN(I)_{k}}
the composite:
\[
\trans{Z}{X}{Y}\to\diag{Z}{X}{Y}\to \prod_{k=1}^n\diagk{Z}{X}{Y}{k} 
~\xra{\kappa_{\fdot}}~\mdiag{\comp{\fdot}}
\]
%
%(or if \w[,]{k=1} the identity)
sends any $\sigma$ to \w[.]{Y(f)\sigma_{s(f)}-\sigma_{t(f)}X(f)}
As a consequence, we get an identical value for any \w{\gdot\in\cN(I)_{j}} 
with \w[.]{\comp{\fdot}=\comp{\gdot}}
Thus, the universal property of the limit implies the difference map
factors through the limit \w[.]{\lof{I}{X}{Y}}

To identify the kernel of $\Psi$, we instead consider the difference map:
\[
Y_{\ast}-X^{\ast}:\trans{Z}{X}{Y}\to\diag{Z}{X}{Y}~.
\]
Clearly \w{\Psi(\sigma)=0} if and only if 
\w[,]{Y(f)\sigma_{s(f)}-\sigma_{t(f)}X(f)=0} for every morphism
$f$ in $I$ \ -- \ that is, precisely when $\sigma$ is a natural transformation 
of \w[.]{\CI}  Since both $X$ and $Y$ are diagrams over $Z$, and
each \w{\sigma_{f}} is a map over \w[,]{Z_{f}} $\sigma$  is in that case actually a
natural transformation over $Z$.
\end{proof}

\begin{notation}\label{nlof}
In order to describe the behavior of the $L$-construction with respect 
to the inclusion of a subcategory \w[,]{\iota:J \to I} note that we can 
define two different diagrams of simplicial abelian groups indexed by
\w{K_{J}} (Definition \ref{dlof}):

One is \w[,]{V_{J}} whose limit is \w[.]{\lof{J}{X}{Y}} The second, 
which we denote by \w[,]{V_{I,J}} has \w[,]{V_{I,J}(\bz)=\diag{Z}{X}{Y}} 
\w[,]{V_{I,J}(\bo)=\prod_{k=1}^{n}~\diagk{Z}{X}{Y}{k}} 
as for \w[,]{V_{I}} (and \w{V_{I,J}(f)=\mf} for \w[).]{f\in\Arr(J)} 
If we set \w[,]{\lof{I,J}{X}{Y}:=\lim_{K_{J}}~V_{I,J}}
we see that there is a canonical map
\w{\tau:\lof{I}{X}{Y}\to \lof{I,J}{X}{Y}}
(since fewer constraints are imposed in defining the second limit 
as a subset of \w[).]{\prod_{f \in \indec{I}}~\mf}

On the other hand, we have a morphism of \ $K_{J}$-diagrams from 
\w[,]{\xi:V_{I,J}\to V_{J}} obtained by projecting the 
larger products \w{\diagk{Z}{X}{Y}{k}} onto  
\w{\diagk{Z\rest{J}}{X\rest{J}}{Y\rest{J}}{k}} for each \w[.]{k\geq 1}  
This induces a map on the limits
\w[,]{\xi_{\ast}:\lof{I,J}{X}{Y} \to \lof{J}{X}{Y}} 
and we define the \emph{restriction map}
\w{(p=)p^{I}_{J}:\lof{I}{X}{Y} \to \lof{J}{X}{Y}} to be 
\w[.]{p^I_J:=\xi_{\ast}\circ\tau}

Finally, note that there is an obvious restriction map
\w[,]{r:\trans{\CIZ}{X}{Y}\to\trans{\CInZ{J}}{X}{Y}}
which is simply the projection onto the factors indexed by \w[.]{\Arr(J)}
\end{notation}

From the definitions it is clear that the diagram:
\mydiagram[\label{dilof}]{
\trans{\CIZ}{X}{Y} \ar[r]^-{\Psi_{I}} \ar[d]_{r} & \lof{I}{X}{Y} \ar[d]^{p^I_J} \\
\trans{\CInZ{J}}{X}{Y} \ar[r]_-{\Psi_{J}} & \lof{J}{X}{Y}.
}
commutes.

The kernel of \w{p^I_J\circ\Psi_{I}} will be the same as the
kernel of \w[,]{\Psi_{J} \circ r^I_J} by the commutativity of \wref[.]{dilof}
However, by Lemma \ref{diffmap}, the kernel of \w{\Psi_{J}} is the space
of $J$-natural transformations.  Thus the kernel of the composite 
\w{p^I_J \circ \Psi_{I}} will be the space \w[.]{\trans{\CInZ{J}}{X}{Y}}

%remark subsumed by discussion before modified primary tower.
%\begin{remark}\label{rkern}
%
%One could also consider the induced map between the kernels of the two
%horizontal maps in the commutative square \wref[.]{dilof}  Since
%the restriction map $r$ is really just a projection, its kernel will
%consist of a sub-product of \w{\trans{Z}{X}{Y}} indexed on those
%objects outside of the subcategory $J$. 
%
%\end{remark}

\begin{lemma}\label{ldescript}
Given \w{J\subseteq I} and \w{f \in \indec{J}} with \w{f=\comp{\fdot}} 
for \w{\fdot=(f_{k},f_{k-1},\dotsc,f_{1})\in\cN(I)_{k}} with 
\w{f_{i}\in\indec{I}} ($i=1,\dotsc k$), \ the following diagram commutes:
\mydiagram[\label{didescript}]{
\lof{I}{X}{Y} \ar[r]^{p^I_J} \ar[d]^-{i_{I}} & \lof{J}{X}{Y} \ar[d]^-{i_{J}}\\
{\displaystyle \prod_{f \in \indec{I}}\mf} \ar[d]^{\proj} &
{\displaystyle \prod_{f \in \indec{J}}\mf} \ar[d]^{\proj} \\
\mdiag{f_{1}}\times\dotsc\times\mdiag{f_{k}} \ar[r]^-{\Phiof{k}{\fdot}} & \mf
}
where the maps \w{i_{I}} and \w{i_{J}} are the inclusions of 
\S \ref{include}. 
\end{lemma}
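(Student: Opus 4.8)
The plan is to unwind all the definitions and show that both composites in \wref{didescript} send a tuple \w[,]{\varphi=\phidot\in\lof{I}{X}{Y}} to the same element of \w[.]{\mf} First I would trace the right-hand-then-down composite: \w{i_{J}\circ p^I_J} sends $\varphi$ to its image in \w[,]{\prod_{g\in\indec{J}}\mdiag{g}} and then the downward projection picks out the \w{f}-component. By the definition of \w{p^I_J=\xi_\ast\circ\tau} in Notation \ref{nlof}, together with Lemma \ref{llof}, the \w{f}-component of \w{p^I_J(\varphi)} is exactly \w{\varphi_{f}} (the given tuple was already indexed by indecomposables of $I$, and \w{\tau} forgets constraints while \w{\xi_\ast} projects onto the $J$-factors, so no recombination happens at an indecomposable \w{f\in\indec{J}}). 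Thus this composite is \w[.]{\varphi\mapsto\varphi_{f}}

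Next I would trace the down-then-right composite: \w{i_{I}(\varphi)} lands in \w[,]{\prod_{f'\in\indec{I}}\mdiag{f'}} the projection selects the tuple \w[,]{(\varphi_{f_{1}},\dotsc,\varphi_{f_{k}})} and then \w{\Phiof{k}{\fdot}} is applied. By the formula \wref{eqphi} for \w[,]{\Phiof{k}{}} unwound along the particular simplex \w[,]{\fdot} and the description of its \w{\fdot}-component as a telescoping sum, \w{\Phiof{k}{\fdot}} sends \w{(\varphi_{f_{1}},\dotsc,\varphi_{f_{k}})} to \w[.]{\sum_{i=1}^k Y(f_{k}\circ\dots\circ f_{i+1})\,\varphi_{f_{i}}\,X(f_{i-1}\circ\dots\circ f_{1})} So commutativity of \wref{didescript} is precisely the assertion that this sum equals \w[,]{\varphi_{f}} which is exactly the recursive identity \wref{eqformphi} (equivalently \wref{eqlof}) that holds because \w[.]{\varphi\in\lof{I}{X}{Y}} Here one uses that \w[,]{f=\comp{\fdot}} so the left-hand indexing by \w{f} and the value \w{\Phis{\fdot}} of the \w{L}-construction agree by Remark \ref{rphis}.

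The only mildly delicate point — and the step I expect to be the main obstacle — is bookkeeping the indexing conventions so that the two paths of \wref{didescript} are literally comparing elements of the \emph{same} copy of \w[:]{\mf} on the right I am using \w{f\in\indec{J}} as an index into \w[,]{\prod_{f\in\indec{J}}\mdiag{f}} while on the bottom \w{f=\comp{\fdot}} appears as the target of \w[.]{\Phiof{k}{\fdot}} One must check that the identification \w{\mdiag{f_1,\dotsc,f_k}\xra{=}\mdiag{\comp{\fdot}}} from Definition \ref{ddiags}(e) is the one implicitly used, and that \w{i_J} genuinely records \w{\varphi_f} in the \w{f}-slot. Once these identifications are pinned down, the commutativity is just \wref{eqformphi}. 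Since everything in sight is built levelwise from the same maps \w{X(f_i)} and \w{Y(f_i)}, naturality in the simplicial direction is automatic, so it suffices to verify the identity on vertices, which is what the above does.

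\begin{proof}
We unwind the definitions and show both composites in \wref{didescript}
send \w{\varphi=\phidot\in\lof{I}{X}{Y}\subseteq\prod_{f\in\indec{I}}\mf}
(using Lemma \ref{llof}) to the same element of \w[.]{\mf}

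Going right then down: by the construction of \w{p^I_J=\xi_{\ast}\circ\tau}
in Notation \ref{nlof}, the map \w{\tau} forgets constraints and \w{\xi_{\ast}}
projects the products \w{\diagk{Z}{X}{Y}{k}} onto the \w{J}-indexed factors;
composing with \w{i_{J}} and the projection to the \w{f}-factor (for
\w[,]{f\in\indec{J}}) this records exactly \w[,]{\varphi_{f}} since no
recombination occurs at an indecomposable index. Hence the right-then-down
composite is \w[.]{\varphi\mapsto\varphi_{f}}

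Going down then right: \w{i_{I}} records \w[,]{\phidot} the projection selects
\w[,]{(\varphi_{f_{1}},\dotsc,\varphi_{f_{k}})} and \w[,]{\Phiof{k}{\fdot}}
by \wref{eqphi} unwound along \w[,]{\fdot} has \w{\fdot}-component the
telescoping sum sending this tuple to
\[
\sum_{i=0}^{k}Y(f_{k}\circ\dots\circ f_{i+1})\,\varphi_{f_{i}}\,
X(f_{i-1}\circ\dots\circ f_{1})~\in~\mdiag{\comp{\fdot}}~=~\mf~,
\]
using the identification of Definition \ref{ddiags}(e). But
\w[,]{\varphi\in\lof{I}{X}{Y}} so by \wref{eqlof} (equivalently the recursive
formula \wref{eqformphi}) this sum equals \w[,]{\varphi_{f}} since
\w[.]{f=\comp{\fdot}} Thus both composites agree, and \wref{didescript}
commutes. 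As all maps involved are built levelwise from the fixed maps
\w{X(f_{i})} and \w[,]{Y(f_{i})} the identity on vertices suffices.
\end{proof}
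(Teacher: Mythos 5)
Your proposal is correct and follows essentially the same route as the paper's own proof: both composites are unwound to show that the right-then-down path records the value \w{\varphi_{f}} while the down-then-right path produces the sum \w[,]{\sum_{i}Y(f_{k}\circ\dots\circ f_{i+1})\varphi_{f_{i}}X(f_{i-1}\circ\dots\circ f_{1})} and these agree by the limit condition \wref{eqlof} via Lemma \ref{llof}. The extra bookkeeping you do about the identification \w{\mdiag{\fdot}=\mdiag{\comp{\fdot}}} and checking on vertices is consistent with, and slightly more explicit than, the published argument.
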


\begin{proof}
Suppose \w{\phidot} is an element of \w[,]{\lof{I}{X}{Y}} while
\w{f=\comp{\fdot}} is a maximal decomposition (so each \w{f_{i}} is
indecomposable). Then \w{\varphi_{f}} lies in \w[,]{\diagk{Z}{X}{Y}{1}} so
\w{\Phis{\varphi_{f}}=\varphi_{f}} lands in \w[.]{\mf} However,
\w{(\varphi_{f_{k}},\dots,\varphi_{f_{1}})\in 
\mdiag{f_{k}}\times\dots\times\mdiag{f_{1}}}
maps to
\w{\Sigma_{i=0}^{k} Y(f_{k} \circ \dots \circ f_{i+1})\varphi_{f_{i}}X(f_{i-1}
\circ \dots \circ f_{1})}
also in \w[.]{\mdiag{\comp{\fdot}}=\mf} Thus, 
%the limit condition for
\w{\phidot \in \lof{I}{X}{Y}=\hlof{I}{X}{Y}} (see Lemma \ref{llof}) 
implies the value of \w{\varphi_{f}} for any
decomposable $f$ is uniquely determined by the values of
\w{\varphi_{f_{i}}} for \w{f_{i}} indecomposable, using formula
\wref[.]{eqlof}
\end{proof}

Note that if $f$ is also indecomposable in $I$, the bottom %vertical
map of \wref{didescript} is \w[.]{\Id:\mf\to\mf} The choice of 
decomposition of $f$ in $I$ is also irrelevant, by Definition \ref{dlof}.

%
%s6      Fibrations in the Auxiliary Tower
%
\section{Fibrations in the Auxiliary Tower}
\label{cfibat}

As noted in \S \ref{sauxf}, the auxiliary tower \wref{eauxt} was
constructed with two goals in mind: to replace \wref{eqprimary} by a
tower of fibrations (with the same homotopy limit), and to identify the
homotopy fibers of the successive maps in \wref[.]{eqprimary}
In this section we show that the map $\Psi$ of Lemma \ref{diffmap} 
is indeed a fibration, and that the auxiliary tower is a tower of
fibrations. First, we need the following:

\begin{defn}\label{dfiltrs}
Any \good indexing category $I$ has two filtrations, defined inductively:
\begin{enumerate}
\renewcommand{\labelenumi}{\alph{enumi})\ }
\item The filtration \w{\{\cF_{i}\}_{i=0}^{n}} on $I$ is defined by
  decomposition length from the left, so \w{\cF_{0}} consists of
  weakly initial objects in $I$ and
  \w{\cF_{n+1}} consists of indecomposable maps 
  with sources in \w[,]{\cF_{n}} 
  (including their targets).
\item The filtration \w{\{\cG_{i}\}_{i=0}^{n}} is similarly defined by
  decomposition length from the right, so \w{\cG_{0}} consists of the
  weakly terminal objects in $I$ and \w{\cG_{n+1}} consists of indecomposable
  maps with targets in \w[,]{\cG_{n}} (including their sources).
\end{enumerate}
\end{defn}

\begin{prop}\label{ppsifib}
If \w{\tuple} is admissible, the induced difference map:
\[
\Psi:\trans{Z}{X}{Y}\to \lof{I}{X}{Y}
\]
of Lemma \ref{diffmap} is a fibration of simplicial abelian groups.
\end{prop}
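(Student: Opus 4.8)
The plan is to reduce, by way of the exponential axiom SM7, the assertion that $\Psi$ is a Kan fibration to the statement that the latching maps of $X$ induce fibrations of mapping spaces, and then to produce the required lifts by induction on the Reedy degree; the point of the target $\lof{I}{X}{Y}$ is that the relations \eqref{eqlof} which cut it out of $\prod_{f\in\indec{I}}\mdiag{f}$ are exactly the limit conditions governing the latching objects of $X$.

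I would start from two elementary reductions. First, since $\trans{Z}{X}{Y}$ and $\lof{I}{X}{Y}$ are simplicial abelian groups, hence Kan, it suffices to solve lifting problems against the horn inclusions, and we will do this by choosing the object-components $\sigma_{b}$ of a lift in turn. Second: if $g\colon B\to C$ is a fibration of simplicial abelian groups and $\alpha\colon A\to C$ is any homomorphism, then $\delta\colon A\times B\to C$, $\delta(a,b)=\alpha(a)-g(b)$, is again a fibration, for the square whose legs are the projection $A\times B\to B$ and the map $A\times C\to C$, $(a,c)\mapsto\alpha(a)-c$, exhibits $(\proj,\delta)\colon A\times B\to A\times C$ as a base change of $g$, so that $\delta$, being $(\proj,\delta)$ followed by a projection, is a composite of fibrations; the same holds with a finite product of fibrations replacing $g$, and signs are immaterial.

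Now bring in the model category. Being cofibrant in $\CIZ$, $X$ is Reedy cofibrant (Remark \ref{rmodel}), so for each object $b$ of $I$ the latching map $\lambda_{b}\colon L_{b}X\to X_{b}$ is a cofibration of cofibrant objects of $\cC/Z_{b}$, where $L_{b}X$ is the colimit of the $X_{a}$ over the indecomposable maps $a\to b$ of $I$, modulo the identifications forced by the factorizations in $I$. As $Z$ is Reedy fibrant and $Y$ is a fibrant abelian group object over $Z$, each $Y_{b}\to Z_{b}$ is a fibration, so SM7 makes
\[
\lambda_{b}^{\ast}\colon\Map_{\cC/Z_{b}}(X_{b},Y_{b})\longrightarrow\Map_{\cC/Z_{b}}(L_{b}X,Y_{b})
\]
a fibration of simplicial abelian groups, and its target is the limit of the $\mdiag{f}$ over the indecomposable $f$ into $b$ with their factorization relations, assembled from the maps $X(f)^{\ast}$. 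I would then induct on $\deg(b)$, choosing $\sigma_{b}$ after every $\sigma_{a}$ with $\deg(a)>\deg(b)$ (so that, $\deg$ being strictly decreasing along maps, all sources of indecomposable maps into $b$ are already in hand; objects without incoming maps are handled freely). Since $\Psi(\sigma)_{f}=Y(f)_{\ast}\sigma_{a}-X(f)^{\ast}\sigma_{b}$ on the factor of an indecomposable $f\colon a\to b$, a lifting datum prescribes, through the elements $Y(f)_{\ast}\sigma_{a}-\Psi(\sigma)_{f}$, exactly the value $\lambda_{b}^{\ast}(\sigma_{b})$ must take; the relations \eqref{eqlof}, valid because $\varphi\in\lof{I}{X}{Y}=\hlof{I}{X}{Y}$, are precisely what forces this prescribed family into the limit $\Map_{\cC/Z_{b}}(L_{b}X,Y_{b})$, and since $\lambda_{b}^{\ast}$ is a fibration agreeing with $\lambda_{b}^{\ast}(\sigma_{b})$ on the sub-horn, the component $\sigma_{b}$ can be chosen. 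Reassembling over all degrees by the second reduction above shows that $\Psi$ is a fibration.

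I expect the heart of the matter to be the verification, at the inductive step, that the family $(Y(f)_{\ast}\sigma_{a}-\Psi(\sigma)_{f})_{f\colon a\to b}$ actually satisfies the limit conditions defining $\Map_{\cC/Z_{b}}(L_{b}X,Y_{b})$: this requires rewriting $X(w)^{\ast}\sigma_{a}$, for a composite $w$, by telescoping the constraints already imposed at higher degrees, so that it becomes the recursive $\Phi$-expression of \eqref{eqformphi}, after which the desired equality reduces exactly to a relation \eqref{eqlof}. The hypotheses that $X$ is (Reedy) cofibrant, that $Y$ is an abelian group object, and that $I$ is strongly directed --- whence the degree filtration and the latching colimits are finite and well behaved --- are exactly what make this bookkeeping close up; one could instead organize the induction along the filtration $\{\cF_{i}\}$ of Definition \ref{dfiltrs}, at the cost of checking that the associated partial latching maps remain cofibrations.
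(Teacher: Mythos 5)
Your argument is correct, and it establishes the fibration property by a genuinely different mechanism than the paper, although the combinatorial skeleton is shared: an induction over $I$ from the weakly initial end, the latching cofibrations supplied by cofibrancy of $X$ (Remark \ref{rmodel}), and a telescoping computation reducing compatibility of the prescribed data to the relations \wref[.]{eqlof} The paper first invokes Quillen's criterion for maps of simplicial groups \cite[II, \S 3, Prop.~1]{QuiH}, so that it suffices to show $\Psi$ surjects onto the zero component of \w[;]{\lof{I}{X}{Y}} it then lifts a nullhomotopic \w{\phidot} by choosing the components \w{\sigma_{c}} along the filtration \w{\{\cF_{i}\}} and rectifying a homotopy-commutative triangle over the latching cofibration via \cite[Cor.~4.20]{BJTurR}, which is what forces the ``everything is nullhomotopic'' bookkeeping of \wref{eqnusigma}--\wref[.]{eqcompnu} You instead solve arbitrary horn-lifting problems for $\Psi$ directly, one object at a time, using SM7 to see that restriction along the latching cofibration $\lambda_{b}$ is a fibration of mapping spaces in \w[;]{\cC/Z_{b}} the consistency check you identify as the heart of the matter is precisely the paper's computation \wref[.]{eqcompnu} Your route avoids the simplicial-group criterion, the nullhomotopy tracking, and the external extension lemma, and because you treat an object only after every object mapping into it, only full latching objects occur, so cofibrancy of $X$ applies verbatim; the paper's route involves less horn/limit bookkeeping and provides the template reused in Propositions \ref{pfibl}--\ref{pfibrest}. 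Three small points: a \good[] $I$ carries no degree function as such, so you should either topologically sort the (finite) set of objects or, as you note, induct on \w{\{\cF_{i}\}} and then worry about partial latching maps; when you assemble the prescribed family into \w{\Map_{\cC/Z_{b}}(L_{b}X,Y_{b})} you should also record that it agrees with $\lambda_{b}^{\ast}$ of the given horn datum on the subhorn (this follows from commutativity of the original lifting square plus the earlier stages of the induction), which is what legitimizes the lift against $\lambda_{b}^{\ast}$; and your second ``elementary reduction'' about difference maps is never actually needed once the componentwise horn-filling is carried out.
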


\begin{proof}
By \cite[II, \S 3, Prop.~1]{QuiH},  it suffices to show that $\Psi$
surjects onto the zero component of \w[.]{\lof{I}{X}{Y}} 
Thus, given \w[,]{0\sim\phidot\in\lof{I}{X}{Y}} we must produce an 
element \w{\sigdot \in\trans{Z}{X}{Y}} with \w[;]{\Psi(\sigdot)=\phidot} 
i.e., for every \w{f:a\to b} in $I$ we want:
\begin{myeq}[\label{eqsigma}]
\sigma_{b}\circ X(f)~=~Y(f)\circ\sigma_{a}-\phiof{f}~.
\end{myeq}

Note that since $Y$ is an abelian group object in \w[,]{\CIZ}
the zero map \w{X\to Y} is the unique map in \w{\CIZ} that
factors through the section \w{s:Z \to Y} (which exists by
\wref{eeight} and \S \ref{ssketch}). 

We proceed by induction on the filtration \w{\{\cF_{i}\}_{i=0}^{n}} of $I$
of Definition \ref{dfiltrs}. To begin, for each \w[,]{c\in\cF_{0}} we 
may choose \w{\sigma_{c}:X_{c}\to Y_{c}} to be $0$.

Assume by induction that we have constructed maps
\w{\sigma_{c}:X_{c}\to Y_{c}} for each \w[,]{c\in\cF_{i}}
satisfying \wref{eqsigma} for every $f$ in \w[,]{\cF_{i}}
and with each \w[.]{\sigma_{c}\sim 0}
Note that for any \w[,]{f:b\to c} in \w{\cF_{i+1}} the map:
\begin{myeq}[\label{eqnusigma}]
\nuof{f}~:=~Y(f)\circ\sigma_{b}-\phiof{f}:X_{b}\to Y_{c}
\end{myeq}
is well-defined (since necessarily \w[).]{b\in\cF_{i}} This is our
candidate for \w[,]{\sigma_{c}\circ X(f)} and
\w{\nuof{f} \sim -Y(f)\circ\sigma_{b} \sim 0} by the assumption on
$\varphi$ together with the induction hypothesis (considering naturality
of the section
\w[).]{Z \to Y}  

Moreover, given any \w{g:a\to b} (necessarily in \w[),]{\cF_{i}} we
have \w{\phiof{g}=Y(g)\circ\sigma_{a}+\sigma_{b}\circ X(f)} by 
\wref[,]{eqsigma} so from \w{\phidot \in \lof{J}{X}{Y}} it follows that:
\begin{myeq}[\label{eqcompnu}]
\begin{split}
\nuof{f \circ g}~=&~Y(f\circ g)\circ\sigma_{a}-\phiof{f\circ g}\\
=&~Y(f\circ g)\circ \sigma_{a}-[Y(f)\circ\phiof{g}+\phiof{f}\circ X(g)]\\
=&~Y(f\circ g)\circ\sigma_{a}-
[Y(f) \circ (Y(g)\circ\sigma_{a}-\sigma_{b}\circ X(g))+\phiof{f}\circ X(g)] \\
=&~\nuof{f}\circ X(g) .
\end{split}
\end{myeq}

Now given \w[,]{c\in\cF_{i+1}\setminus\cF_{i}} set:
\[
\hX_{c}~:=~\colimit{b\in I/c}X_{b}~.
\]

Since \w{X\in\CI} is cofibrant, it is Reedy cofibrant (\S
\ref{rmodel}), which implies that the canonical map
\w{\varepsilon_{c}:\hX_{c}\to X_{c}} is a cofibration. Moreover,
\wref{eqcompnu} implies  that the maps \w{\nuof{f}} defined above induce a map
\w[.]{\hat{\nu_c}:\hX_{c}\to Y_{c}} Since all the maps in question 
are nullhomotopic by construction, the diagram:
\[
\xymatrix{
\hX_{c} \ar[rd]_{\hat{\nu_c}}\ar[r]^{\varepsilon_{c}} & X_{c}\ar[d]^{0} \\
& Y_{c}\\
}
\]
commutes up to homotopy. Hence by \cite[Cor.~4.20]{BJTurR}  there is a map
\w{\sigma:X_{c}\to Y_{c}} in \w{\cC/Z_{c}} making the diagram
\mydiagram[\label{diextend}]{
\hX_{c} \ar[rd]_{\hat{\nu_c}}\ar[r]^{\varepsilon_{c}} & X_{c}\ar[d]^{\sigma} \\
& Y_{c}\\
}
commute, and we choose this to be \w[.]{\sigma_{c}} By construction
\w{\sigma_{c}\circ X(f)=\nuof{f}} for every \w[,]{f:b\to c} so 
\wref{eqsigma} is satisfied. This completes the induction.
\end{proof}

\begin{prop}\label{pfibl}
If \w{\tuple} is admissible, let $J$ be a subcategory of $I$
obtained by omitting a terminal object $c$. Then the restriction map
\w{p^I_J:\lof{I}{X}{Y}\to\lof{J}{X}{Y}} is a fibration.  
\end{prop}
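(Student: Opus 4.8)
The plan is to reduce the statement about $p^I_J$ to the already-established fibration property of the map $\Psi$, using the commuting square \wref{dilof} and the criterion of \cite[II, \S 3, Prop.~1]{QuiH} (surjectivity onto the zero component). Concretely, since $J$ is obtained from $I$ by deleting a terminal object $c$, the restriction map $r\colon \trans{\CIZ}{X}{Y}\to\trans{\CInZ{J}}{X}{Y}$ is just the projection that forgets the single factor $\Map_{\cC/Z_c}(X_c,Y_c)$; this is clearly a surjective fibration of simplicial abelian groups, with kernel that factor. So in the diagram \wref{dilof} both $\Psi_I$ and $\Psi_J$ are fibrations (by Proposition \ref{ppsifib}), and $r$ is a fibration; the goal is to deduce that $p^I_J$ is a fibration. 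First I would observe that, because $c$ is terminal, every indecomposable map of $I$ that is \emph{not} in $J$ has target $c$; hence $\indec{I}\setminus\indec{J}$ is exactly the set of indecomposable maps into $c$, and the extra coordinates that $\lof{I}{X}{Y}$ carries over $\lof{J}{X}{Y}$ are precisely the components $\varphi_f$ for $f\colon b\to c$ indecomposable, living in $\mdiag{f}=\Map_{Z_c}(X_b,Y_c)$.

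The main step is then to verify the surjectivity criterion: given $\phidot\in\lof{J}{X}{Y}$ that is nullhomotopic (i.e.\ lies in the zero component), I must produce a nullhomotopic lift $\widetilde{\phidot}\in\lof{I}{X}{Y}$. By Lemma \ref{llof} it suffices to specify the indecomposable components; keep $\varphi_f$ for $f\in\indec{J}$ unchanged, and for each indecomposable $f\colon b\to c$ one must choose $\widetilde\varphi_f\in\Map_{Z_c}(X_b,Y_c)$ so that the compatibility relations \wref{eqlof} hold whenever $\comp{\fdot}=\comp{\gdot}$ with both sides ending at $c$. The crucial simplification is that, since all new targets equal the terminal object $c$, there are no constraints relating two \emph{different} indecomposable maps into $c$ to each other directly; the only constraints are of the form "$\varphi_g$ for $g\colon b\to c$ decomposable in $I$ (say $g=\comp{\fdot}$ with the last arrow $f_k\colon b_{k-1}\to c$) is determined, via \wref{eqformphi}, by the already-chosen data at arrows strictly before $c$ together with $\widetilde\varphi_{f_k}$." I would run an induction along the filtration $\{\cG_i\}$ of Definition \ref{dfiltrs}(b) (decomposition length from the right), exactly paralleling the argument in Proposition \ref{ppsifib}: at each stage the nullhomotopy of $\phidot$ and of the previously constructed components, together with naturality of the section $s\colon Z\to Y$, forces the candidate value to be nullhomotopic, and one invokes \cite[Cor.~4.20]{BJTurR} (using that $X$ is Reedy cofibrant, so the latching map $\widehat{X}_c\to X_c$ is a cofibration) to realize a genuine map $X_b\to Y_c$ over $Z_c$ with the right restriction. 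This produces the desired lift $\widetilde\varphi_f$, hence a nullhomotopic preimage of $\phidot$ in $\lof{I}{X}{Y}$.

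Finally I would note that the same nullhomotopy-and-lifting bookkeeping shows the lift can be chosen in the zero component, which is all that \cite[II, \S 3, Prop.~1]{QuiH} requires for $\cC=s\cA$ with $\cA$ a $\fG$-sketchable category; since all spaces in sight are simplicial abelian groups, surjectivity onto $\pi_0=0$ of the base is equivalent to being a fibration. The hard part will be organizing the induction so that the compatibility equations \wref{eqlof} at the newly added target $c$ are genuinely closed under the choices made — i.e.\ checking that the analogue of \wref{eqcompnu} holds, so that the maps $\nu(f)$ assemble into a well-defined map out of the latching object $\widehat{X}_c$. This is a routine but slightly intricate diagram chase using \wref{eqformphi} and the hypothesis $\phidot\in\lof{J}{X}{Y}$; once it is in place, the rest follows the template of Proposition \ref{ppsifib} verbatim, so I would keep the exposition brief and refer back to that proof.
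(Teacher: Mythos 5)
Your high-level plan is the same as the paper's: verify the Quillen criterion (surjectivity onto the zero component), keep the components of \w{\phidot} over $J$, and construct the missing components of a lift by an induction combining nullhomotopies with extensions along cofibrations via \cite[Cor.~4.20]{BJTurR}, using Reedy cofibrancy of $X$. But two of your concrete choices are wrong, and they obscure where the work lies. First, the induction must run over the filtration \w{\{\cF_{i}\}} of Definition \ref{dfiltrs}(a) (decomposition length from the \emph{left}, i.e.\ over the sources $e$ of maps \w[),]{\ell:e\to c} exactly as in Proposition \ref{ppsifib} --- not over \w{\{\cG_{i}\}} as you propose (note you cite \ref{ppsifib} as your template while naming the opposite filtration). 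Since $c$ is terminal, all constraints on the new components arise from \emph{pre}composition with strings in $J$, and these propagate backwards toward the weakly initial objects; a right-to-left (\w[)]{\cG}-style induction is the dual argument of Proposition \ref{pfibr} and would need $Y$ to be strongly fibrant, which is neither assumed nor needed here. Second, the latching map \w{\hX_{c}\to X_{c}} at $c$ never enters: no component at the object $c$ is being built (an element of \w{\lof{I}{X}{Y}} has components indexed by arrows), and $X_{c}$ plays no role. The extension problems occur at \emph{every} object $e$ admitting a map to $c$, along the cofibrations \w[.]{\colim_{a\in\cF_{i}/e}X_{a}\hra X_{e}}

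More seriously, the point you defer as ``a routine but slightly intricate diagram chase'' is the actual content of the proof. A decomposable arrow \w{\ell:e\to c} may factor through two \emph{different} indecomposable arrows \w{f\neq f'} into $c$, and then \eqref{eqlof} imposes a genuine relation between the corresponding new components after precomposition (not merely a formula determining a value at $\ell$ from previously chosen data). The paper resolves this by introducing auxiliary maps \w{\nuof{\gdot,f}:X_{e}\to Y_{c}} for \emph{all} strings ending at $c$, subject to the two separate requirements \eqref{eqmucohere} (coherence under precomposition) and \eqref{eqmuconsist} (consistency among decompositions of the same arrow), and by a specific order of choices at each stage of the \w[-induction:]{\cF} choose \emph{one} decomposition of $\ell$, extend the (nullhomotopic) map assembled on \w{\colim_{a\in\cF_{i}/e}X_{a}} over $X_{e}$ using Cor.~4.20, and then \emph{define} the values for all other decompositions by the consistency formula, checking nullhomotopy via the section \w[.]{Z\to Y} Only at the end does one set \w{\sigof{f}:=\nuof{\emptyset,f}} for $f$ indecomposable into $c$. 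Without this bookkeeping your construction does not show that the chosen components satisfy all instances of \eqref{eqlof}, so the proposal as written is incomplete; supplying it turns your outline into the paper's proof.
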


\begin{proof}
As in the previous proof, we must inductively define a lift 
\w{\sigdot\in \lof{I}{X}{Y}} 
for a nullhomotopic \w[.]{\phidot \in \lof{J}{X}{Y}} 
Under these conditions, \w{p^I_J} is simply a forgetful functor, so 
this means \w{\sigma_g=\varphi_g} for $g$ a morphism of $J$ and
we must define \w{\sigma_{\ell}:X_d \to Y_c} whenever \w{\ell:d \to c}
is a morphism in $I$, in a manner compatible with the definition of
\w[.]{\phidot} Note that \w{\phidot} determines the composite
\w[.]{Y(f) \circ \Phiof{I}{\gdot}=:\psiof{\gdot,f}}

Following the approach of the previous proof, we will define
\w{\nuof{\gdot,f}}  for any \w{e\xra{\gdot} d\xra{f} c} in $I$, where $f$ 
is indecomposable, so as to satisfy three properties:

First, we require that our choices be \emph{coherent}:
\begin{myeq}[\label{eqmucohere}]
\nuof{\gdot \circ \hdot,f}=\nuof{\gdot,f} \circ X(\comp{\hdot})~,
\end{myeq}
which will allow us to build a homotopy commutative triangle using a
colimit construction.  

Second, we need our choices to be \emph{consistent}:
\begin{myeq}[\label{eqmuconsist}]
\nuof{\gdot,f}=\nuof{\gdot',f'} + \psiof{\gdot,f} - \psiof{\gdot',f'} 
 \text{ whenever } f \circ \gdot=f' \circ \gdot' \text{ in } I~, 
\end{myeq}
which is needed so that we eventually obtain an element 
\w[.]{\sigdot \in \lof{I}{X}{Y}} In fact, our construction will also
work when \w[,]{\gdot=\emptyset} which will yield 
\w[.]{\sigof{f}=\nuof{\emptyset,f}}

Finally, we require that each \vsm \w[.]{\nuof{\gdot,f} \sim 0}

We now proceed to choose \w{\nuof{\gdot,f}} for \w{e\xra{\gdot}d\xra{f} c}
with \w{e\in\cF_{i}} (Definition \ref{dfiltrs}) by induction on \w[:]{i\geq 0}

For each \w{\ell:e\to c} in $I$ with \w[,]{e\in\cF_{0}} choose some 
decomposition \w{e\xra{\gdot} d\xra{f} c} (with \w{\ell=\comp{\gdot,f}} 
and \ $f$ indecomposable), and an arbitrary nullhomotopic
\w[.]{0=\nuof{\gdot,f}:X_{e}\to Y_{c}} For any other decomposition
\w[,]{\ell=\comp{\gdot',f'}} the map \w{\nuof{\gdot',f'}} is then
determined by \wref[.]{eqmuconsist}

Assume that $\nu$ has been defined for every \w{e \in \cF_{i}} 
so that \wref{eqmucohere}
and \wref{eqmuconsist} hold (wherever applicable). \ For each 
\w{e\in\cF_{i+1}\setminus\cF_{i}} and map \w[,]{\ell:e\to c} consider the
over-category \w{\cF_{i}/e} (which is non-empty by definition of
\w[)]{\cF_{i+1}} and set \w[.]{\hX_{e}:=\colim_{a\in\cF_{i}/e}\,X_{a}}
Because the diagram $X$ is cofibrant, hence Reedy cofibrant (\S \ref{rmodel}) 
in \w[,]{\CI} the canonical
map \w{\varepsilon_{e}:\hX_{e}\hra X_{e}} is a cofibration.

Again choose some decomposition \w{e\xra{\gdot} d\xra{f}c} of $\ell$.
The maps \w[,]{\nuof{\gdot \circ \hdot,f}:X_{a}\to Y_{c}} for each composable
sequence \w{\hdot:a\to e} in \w{\cF_{i}/e} induce a (necessarily
nullhomotopic) map \w{\hat{\mu}_{e}:\hX_{e}\to Y_{c}} 
by \wref[.]{eqmucohere}
Since:
\[
\xymatrix{
\hX_{e} \ar[r]^{\varepsilon_{e}} \ar[rd]_{\hat{\nu}_{(\gdot,f)}} & X_{e}
\ar[d]^{0}\\
& Y_{c}
}
\]
then commutes up to homotopy, we apply \cite[Cor.~4.20]{BJTurR}
to find
\[
\xymatrix{
\hX_{e} \ar[r]^{\varepsilon_{e}} \ar[rd]_{\hat{\nu}_{(\gdot,f)}} & X_{e}
\ar[d]^{\nuof{\gdot,f}}\\
& Y_{c}
}
\]
making the diagram commute.  

For any other decomposition \w{e\xra{\gdot'} d'\xra{f'} c} of $\ell$,
use \wref{eqmuconsist} to define \w[.]{\nuof{\gdot',f'}}
This completes the induction step.

We have thus defined \w{\nuof{\gdot,f}:X_{e}\to Y_{c}} satisfying 
\wref{eqmucohere} and  \wref{eqmuconsist} for every 
\w{e\xra{\gdot} d\xra{f} c} in \w[.]{I/c} In particular, we can choose 
\w{\sigof{f}=\nuof{\emptyset,f}:X_{d}\to Y_{c}} for each
indecomposable \w{f:d\to c} in $I$ and see that \w{\sigdot\in\lof{I}{X}{Y}} 
(by Lemma \ref{llof}) is the desired lift.
\end{proof}

\begin{cor}\label{cfibl}
If \w{\tuple} is admissible, let $J$ be a full subcategory of
$I$ obtained by omitting an object $c$ such that all maps out of $c$
are indecomposable. Then \w{p^I_J:\lof{I}{X}{Y}\to\lof{J}{X}{Y}} is a fibration. 
\end{cor}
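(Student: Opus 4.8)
The plan is to deduce this from Proposition~\ref{pfibl} by passing through an intermediate subcategory in which $c$ has been made (weakly) terminal. The first thing I would record is what the hypothesis says: every non-identity map out of $c$ is indecomposable exactly when every object receiving a non-identity map from $c$ is weakly terminal in $I$, since a composite $c\to e\to b$ of two non-identity maps would be a decomposable map out of $c$. If $c$ is already weakly terminal there is nothing new — that is Proposition~\ref{pfibl} — so I would assume $c$ has a non-identity outgoing map; in the situation of interest (the primary tower of \S\ref{spt}) $c$ is moreover internal, hence also has a non-identity incoming map, and I will use this below.

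Next I would introduce the (non-full) subcategory $I^{\dagger}\subseteq I$ with the same objects as $I$ but with every non-identity morphism whose \emph{source} is $c$ removed. Since a composite $\beta\alpha$ has source $c$ only when $\alpha$ does, $I^{\dagger}$ is closed under composition, and in $I^{\dagger}$ the only morphism with source $c$ is $\Id_{c}$; thus $c$ is terminal in $I^{\dagger}$, and $J$ is precisely the full subcategory of $I^{\dagger}$ obtained by omitting $c$. I would then check that $(\cC,I^{\dagger},Z,X,Y)$ is still admissible: directedness and local finiteness are inherited, the weakly initial and weakly final subcategories stay nonempty, and connectedness survives because the incoming morphisms of $c$ and all composites through $c$ are retained, so any path in $I$ using an edge $c\to b$ can be rerouted through an incoming morphism of $c$. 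This admissibility check — really just the connectedness point — is the only place the argument needs real care.

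Finally I would factor the restriction map as $\lof{I}{X}{Y}\xra{p^I_{I^\dagger}}\lof{I^\dagger}{X}{Y}\xra{p^{I^\dagger}_{J}}\lof{J}{X}{Y}$, the transitivity $p^I_J=p^{I^\dagger}_J\circ p^I_{I^\dagger}$ being immediate from Notation~\ref{nlof} (or from Lemma~\ref{llof} and the concatenation formula~\wref{eqformphi}). Applying Proposition~\ref{pfibl} with $I^{\dagger}$ as the ambient category and $c$ its terminal object shows $p^{I^\dagger}_J$ is a fibration. For $p^I_{I^\dagger}$: since $I$ and $I^{\dagger}$ have the same objects, the spaces of discrete transformations for the two coincide and the left vertical map $r$ in the commuting square~\wref{dilof} is the identity, so that square gives $p^I_{I^\dagger}\circ\Psi_I=\Psi_{I^\dagger}$; by Proposition~\ref{ppsifib} both $\Psi_I$ and $\Psi_{I^\dagger}$ are fibrations of simplicial abelian groups, so by \cite[II, \S 3, Prop.~1]{QuiH} the image of $\Psi_{I^\dagger}$ contains the basepoint component of $\lof{I^\dagger}{X}{Y}$, hence so does the larger image of $p^I_{I^\dagger}$, and the same criterion makes $p^I_{I^\dagger}$ a fibration. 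Thus $p^I_J$ is a composite of fibrations, so a fibration. (Should arranging admissibility of $I^{\dagger}$ in complete generality prove awkward, the fallback is to mimic the proof of Proposition~\ref{pfibl} directly, now also choosing the lift components $\sigma_{f}$ for indecomposable $f:c\to b$ by the same $\hX_{c}\to X_{c}$ colimit construction and \cite[Cor.~4.20]{BJTurR}.)
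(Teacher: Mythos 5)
Your reduction through the wide (non-full) subcategory $I^{\dagger}\subset I$, obtained by deleting every non-identity morphism with source $c$, is a genuinely different route from the paper's: the paper proves the corollary directly, by running the lifting construction of Proposition~\ref{pfibl} to produce $\sigma(f)$ for all $f\colon d\to c$ and then defining $\sigma(g)$ for each indecomposable $g\colon c\to b$ by extending the map forced by \eqref{eqformphi} on $\colim_{d\in I/c}X_{d}$ over the cofibration $\colim_{d\in I/c}X_{d}\to X_{c}$, exactly as in \eqref{diextend} -- so your ``fallback'' is essentially the paper's actual proof. The formal skeleton of your main route is attractive: $p^{I}_{J}=p^{I^{\dagger}}_{J}\circ p^{I}_{I^{\dagger}}$, the square \eqref{dilof} with $r=\Id$ gives $p^{I}_{I^{\dagger}}\circ\Psi_{I}=\Psi_{I^{\dagger}}$, and surjectivity onto the zero component then passes from $\Psi_{I^{\dagger}}$ to $p^{I}_{I^{\dagger}}$.

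The genuine gap is in the admissibility of $(\cC,I^{\dagger},Z,X,Y)$, which you must have in order to quote Propositions~\ref{ppsifib} and~\ref{pfibl} for $I^{\dagger}$. Conditions (c) and (d) of Definition~\ref{admit} are properties of $Z$ and $X$ \emph{as diagrams over the index category}, and they do not restrict automatically along the non-full inclusion $I^{\dagger}\subset I$; connectedness is not ``the only place the argument needs real care.'' Cofibrancy of $X\rest{I^{\dagger}}$ does hold, but only because of the corollary's hypothesis: every target of a map out of $c$ is weakly terminal, so the restriction to $I^{\dagger}$ of each free diagram generated at an object of $I$ is again a coproduct of free $I^{\dagger}$-diagrams, whence restriction preserves the generating cofibrations of the diagram model structure -- an argument you do not supply. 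Worse, Reedy fibrancy of $Z\rest{I^{\dagger}}$ can actually fail, because matching objects over $I^{\dagger}$ are limits over categories that have lost the factorizations through $c$: for the commutative square of Example~\ref{squarefilter} with $c=3$, Reedy fibrancy over $I$ says $Z_{4}\to Z_{3}\times_{Z_{1}}Z_{2}$ is a fibration, while over $I^{\dagger}$ it demands that $Z_{4}\to Z_{3}\times Z_{2}$ be one, which does not follow. So you cannot cite the two propositions verbatim; you would have to check that their proofs use only objectwise fibrancy of $Z$ (i.e., reopen them) and insert the cofibrancy argument. Finally, your construction excludes a weakly initial $c$ (there $I^{\dagger}$ is disconnected), a case the statement of the corollary allows and which would have to be treated separately, e.g.\ by the direct argument.
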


\begin{proof}
As in the proof of Proposition \ref{pfibl} we can construct 
\w{\sigof{f}} for each \w[.]{f:d\to c} in $I$, such that we have 
\w[,]{\hat{\nu}:\colim_{d\in I/c}\,X_{d}\to Y_{c}}
as well as \w[.]{\hat{\epsilon_c}:\colim_{d\in I/c}\,X_{d}\to X_{c}}
For any \w[,]{g:c\to b} in $I$ (indecomposable by assumption),
we also have a map 
\w{\hat{\varphi}:\colim_{d\in I/c}\,X_{d}\to X_{b}}
induced by \w[.]{\phidot} 
Note that by \wref{eqformphi} we must have: 
\[
\sigof{g}\circ X(\hat{\epsilon_c})~=~\Phiof{I}{(g,f)}-Y(g)\circ\sigof{f}~=~
\hat{\varphi}-Y(g)\circ\hat{\nu}~,
\]
and since \w{X(f)} is a cofibration, we may choose the extension 
\w{\sigof{g}} as in \wref[.]{diextend}
\end{proof}

\begin{defn}\label{dstrongf}
If $I$ is a \good indexing category, let \w{\cJ=\{J_{k}\}_{k\in N}}
be a fine orderable cover (\S \ref{emainex}) of $I$ subordinate to the
filtration $\cG$ (Definition \ref{dfiltrs}), such that 
\w{J_{k}\setminus J_{k-1}} consists of a single object of $I$ for each
\w[.]{k\in N} 
Let \w{\cC=s\cA} for some $\fG$-sketchable category $\cA$ (\S
\ref{ssketch}), with \w{Z\in\CI} fibrant. A fibrant abelian group object 
\w{Y\in\CIZ} is called \emph{strongly fibrant} if it is 
$\cJ$-fibrant with respect to the model category structure
of \S \ref{smodel}(a).
\end{defn}

\begin{remark}\label{rstrongf}
Note that this definition is independent of the choice of the
refinement $\cJ$ of $\cG$. Forthermore, by Proposition \ref{ptwo}, any
abelian group object \w{Y\in\CIZ} is weakly equivalent to one which is
strongly fibrant.
\end{remark}

\begin{prop}\label{pfibr}
Suppose \w{\tuple} is admissible, and that $Y$ is strongly fibrant.
Assume that $J$ is obtained from $I$ by 
omitting an object $c$ such that all maps into $c$ are indecomposable.
Then the restriction map
\w{p^I_J:\lof{I}{X}{Y}\to\lof{J}{X}{Y}} is a fibration. 
\end{prop}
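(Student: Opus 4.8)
The plan is to run the same kind of inductive lifting argument as in Propositions \ref{ppsifib} and \ref{pfibl}, but now exploiting the extra hypothesis that \w{Y} is strongly fibrant to handle the object \w{c} being omitted. By \cite[II, \S 3, Prop.~1]{QuiH} (and \S \ref{ssketch}), since everything in sight is a simplicial abelian group, it suffices to show that \w{p^I_J} surjects onto the zero component: given a nullhomotopic \w[,]{\phidot \in \lof{J}{X}{Y}} we must produce \w{\sigdot \in \lof{I}{X}{Y}} restricting to it. As before, \w{\sigma_{g}=\varphi_{g}} is forced for \w{g} a morphism of \w[,]{J} and the only new data to be constructed are the maps \w{\sigof{f}:X_{d}\to Y_{c}} for \w{f:d\to c} indecomposable in \w[.]{I} (Since all maps \emph{into} \w{c} are indecomposable by hypothesis, every morphism of \w{I} not in \w{J} factors as \w[,]{g\circ f} \w[,]{f:d\to c} indecomposable, with \w{g} a morphism of \w[,]{J} so the values \w{\varphi_{g}} together with the \w{\sigof{f}} determine \w{\sigdot} via formula \wref[.]{eqformphi})

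First I would fix, for each indecomposable \w[,]{f:d\to c} a candidate \w{\hat{\nu}_{f}:\hX_{d}\to Y_{c}} on \w[,]{\hX_{d}:=\colim_{a\in I/d}X_{a}} assembled from the constraint that \w{\sigof{g}\circ\sigof{f}\circ X(\varepsilon)} must match \w{\Phis{(g,f,\cdot)}} applied to \w[;]{\phidot} the coherence and consistency bookkeeping is exactly as in the proof of Proposition \ref{pfibl} (equations \wref{eqmucohere} and \wref[),]{eqmuconsist} carried out by induction on the filtration \w{\{\cG_{i}\}} of Definition \ref{dfiltrs}, which is the natural one here since we are omitting a weakly-terminal-adjacent object relative to \w[).]{\cG} Since \w{X} is Reedy cofibrant, \w{\varepsilon_{d}:\hX_{d}\hra X_{d}} is a cofibration. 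This produces a diagram
\[
\xymatrix{
\hX_{d} \ar[r]^{\varepsilon_{d}} \ar[rd]_{\hat{\nu}_{f}} & X_{d} \ar[d]^{?} \\
& Y_{c}
}
\]
and we need to fill in the right vertical arrow over \w{Z_{c}} so the triangle commutes on the nose.

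The main obstacle — and the reason the hypothesis ``all maps into \w{c} are indecomposable'' is needed, and the reason \w{Y} must be \emph{strongly} fibrant rather than merely fibrant — is precisely this extension step. In Propositions \ref{ppsifib} and \ref{pfibl} the target map was the zero map \w{X_{c}\to Y_{c}} (or a forgetful identification), so \cite[Cor.~4.20]{BJTurR} applied directly. Here there is no reason the composite constraint map out of \w{\hX_{d}} is nullhomotopic, because \w{c} genuinely has incoming maps in \w[,]{I} so we cannot reduce to the zero map. Instead the relevant statement is that the ``latching map'' \w{\hX_{d}\to X_{d}} has the left lifting property against the fibration \w{Y_{c}\to \hsi Y(c)\times\cdots} encoding how \w{Y} at \w{c} is built from its values at \w{J}; this is exactly what \w{\cJ}-fibrancy of \w{Y} for the fine cover subordinate to \w{\cG} (i.e., strong fibrancy, Definition \ref{dstrongf}) buys us — the augmentation \w{Y_{c}\to\hsi Y(c)} is a fibration in the appropriate overcategory, and since \w{X} is cofibrant the latching inclusion is an acyclic cofibration in the relevant relative sense, so the lift exists. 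I would phrase this as: the square
\[
\xymatrix{
\hX_{d} \ar[r] \ar[d]_{\varepsilon_{d}} & Y_{c} \ar[d] \\
X_{d} \ar[r] & \hsi Y(c) \times_{(\cdots)} (\cdots)
}
\]
admits a diagonal lift because the right-hand map is a fibration (strong fibrancy) and \w{\varepsilon_{d}} is a cofibration whose cofiber maps trivially, then take \w{\sigof{f}} to be that lift. Having chosen the \w{\sigof{f}} compatibly (using \wref{eqmuconsist} to propagate over alternative decompositions, exactly as before), Lemma \ref{llof} assembles them with the \w{\varphi_{g}} into the required element \w{\sigdot\in\lof{I}{X}{Y}} mapping to \w[,]{\phidot} which proves \w{p^I_J} is a fibration.
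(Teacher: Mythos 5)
Your opening moves are sound: reducing to surjectivity onto the zero component via \cite[II, \S 3, Prop.~1]{QuiH} is exactly how the paper proceeds, and you correctly sense that strong fibrancy is the dual of Reedy cofibrancy here. But the proposal misidentifies the lifting data, and this is fatal. The morphisms of $I$ absent from $J$ are those whose source \emph{or} target is $c$. A morphism $g\colon c\to b$ is not a morphism of $J$ and does not factor as (morphism of $J$)$\,\circ\,$(map into $c$), so your parenthetical factorization claim is false, and the values $\sigof{g}$ for the indecomposable maps \emph{out of} $c$ are genuinely new data that your construction never produces. Producing them is the bulk of the work: one must choose them coherently and consistently for all continuations $c\to b\to a\to\cdots$, and the paper does this by the exact dual of the induction in Proposition~\ref{pfibl} \ -- \ an induction on the filtration $\{\cG_{j}\}$ of Definition~\ref{dfiltrs} (decomposition length from the right), at each stage lifting nullhomotopic maps through the matching-type fibrations of $Y$ that strong fibrancy provides, dually to the latching cofibrations of $X$ used in Propositions~\ref{ppsifib} and~\ref{pfibl}. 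Note also that the hypothesis ``all maps into $c$ are indecomposable'' is not there to make an extension at $d$ work: it guarantees that no composite of length $\geq 2$ ends at $c$, so the ``into $c$'' data consists only of indecomposables with no left-hand coherence conditions at $c$ \ -- \ precisely dual to the role of ``all maps out of $c$ indecomposable'' in Corollary~\ref{cfibl}.

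The one step you do attempt is also not justified as stated. Your square has left vertical $\varepsilon_{d}$ a cofibration and right vertical a fibration; a diagonal lift in such a square is not automatic, and the assertion that $\varepsilon_{d}$ is ``an acyclic cofibration in the relevant relative sense'' is unsupported (the latching map of a Reedy cofibrant diagram is merely a cofibration). The paper never needs such a mixed lifting. Because $\phidot$ is nullhomotopic, only surjectivity onto the zero component is required, and the maps into $c$ are handled at the very end by a single homotopy-lifting of a nullhomotopic map through the fibration $\rho_{c}\colon Y_{c}\to\lim_{b\in c/I}Y_{b}$ (over $Z$) supplied by strong fibrancy \ -- \ the dual of the homotopy-extension step via \cite[Cor.~4.20]{BJTurR} in the last part of the proof of Corollary~\ref{cfibl}; no colimit over $I/d$ enters at all, since there are no composites ending at $c$. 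So as written the proposal omits the central induction (the maps out of $c$) and replaces the remaining step by an unproved lifting claim; it does not establish the proposition.
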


\begin{proof}
Dual to the proofs of Proposition \ref{pfibl}   and Corollary \ref{cfibl}.
The strong fibrancy is needed since in the model category we use for 
diagrams ordinary fibrancy is merely objectwise, while strong fibrancy
is dual to Reedy cofibrancy for our purposes.
\end{proof}

\begin{prop}\label{pfibrest}
If \w{\tuple} is admissible, $Y$ is strongly fibrant, and $J$ is
obtained from $I$ by omitting any object $c$, then the restriction map
\w{p^I_J:\lof{I}{X}{Y}\to\lof{J}{X}{Y}} is a fibration.  
\end{prop}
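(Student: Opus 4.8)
The plan is to reduce the general case to the three special cases already established in Propositions \ref{pfibl} and \ref{pfibr} and Corollary \ref{cfibl}. When we omit an arbitrary internal object $c$ from a \good indexing category $I$, the maps into $c$ and the maps out of $c$ need not be indecomposable, so none of the earlier results applies directly. The key observation is that we may factor the inclusion $J \hra I$ through a chain of intermediate full subcategories, each step of which omits a single object all of whose incoming maps are indecomposable, or all of whose outgoing maps are indecomposable. Concretely, instead of removing $c$ alone, first enlarge the picture: let $I'$ be the \good category obtained from $I$ by formally inserting a new object between $c$ and each of its neighbors — i.e., replacing each indecomposable map $f:d\to c$ by a composable pair $d \to d_{f} \to c$, and similarly each $g:c\to b$ by $c \to b_{g} \to b$ — chosen so that $I'$ is still \good and so that $L_{I'}(X',Y') \cong L_{I}(X,Y)$ for appropriate (co)fibrant/fibrant extensions $X'$, $Y'$ of $X$, $Y$ (defined on the new objects by the relevant latching/matching constructions, using that $X$ is Reedy cofibrant and $Y$ strongly fibrant).

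Having arranged that $c$ sits in $I'$ with \emph{all} maps into $c$ and \emph{all} maps out of $c$ indecomposable, I would then remove $c$ in two stages within a further enlargement, or — more cleanly — invoke the results directly: by Corollary \ref{cfibl} (applied to the category in which every map out of $c$ is indecomposable) the restriction map omitting $c$ is a fibration, and dually by Proposition \ref{pfibr} (using strong fibrancy of $Y$) the version where every map \emph{into} $c$ is indecomposable is also a fibration. Since in $I'$ both conditions hold simultaneously, $p_{J'}^{I'}:\lof{I'}{X'}{Y'}\to\lof{J'}{X'}{Y'}$ is a fibration, where $J'$ is $I'$ with $c$ deleted. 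Now $J'$ is just $J$ with the same new auxiliary objects inserted, so $\lof{J'}{X'}{Y'}\cong\lof{J}{X}{Y}$ under the same isomorphism, and transporting along these identifications identifies $p_{J'}^{I'}$ with $p^{I}_{J}$. Therefore $p^{I}_{J}$ is a fibration.

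The main obstacle will be verifying the two isomorphisms $\lof{I'}{X'}{Y'}\cong\lof{I}{X}{Y}$ and $\lof{J'}{X'}{Y'}\cong\lof{J}{X}{Y}$ compatibly — that is, checking that the $L$-construction is insensitive to subdividing an indecomposable arrow into a composable pair, provided the diagrams $X'$, $Y'$ on the subdivided category are chosen by the evident Kan extension formulas (colimit on the source side, limit on the target side). This amounts to showing that in the limit $\lim_{K_{I'}} V_{I'}$ the value $\varphi_{f}$ at a subdivided arrow is forced by the values at the original indecomposables via formula \wref{eqformphi}, exactly as in Lemma \ref{llof}, and that the defining equations \wref{eqlof} for $I'$ reduce to those for $I$. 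The compatibility of $X'$, $Y'$ with the (co)fibrancy hypotheses — $X'$ Reedy cofibrant and $Y'$ strongly fibrant — follows from the latching/matching characterizations in \S\ref{smodel} together with the fact that the inserted objects have a single incoming and single outgoing map. Once this bookkeeping is in place, the statement is immediate from the three earlier results.
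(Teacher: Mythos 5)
Your reduction breaks down at its central step. After inserting the new objects $d_{f}$ and $b_{g}$, it is \emph{not} true that all maps into and out of $c$ in $I'$ are indecomposable: the composites $d\to d_{f}\to c$ and $c\to b_{g}\to b$ (and all longer composites through them) are decomposable arrows into and out of $c$, so neither Corollary \ref{cfibl} nor Proposition \ref{pfibr} applies to the pair $(I',c)$. The hypotheses of those results — \emph{every} arrow out of (resp.\ into) $c$ indecomposable — force the targets (resp.\ sources) of such arrows to admit no further non-identity compositions, and no enlargement obtained by interposing objects between $c$ and its neighbours can arrange this; subdivision creates new composable pairs through $c$ rather than eliminating them. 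Moreover, the "bookkeeping" you flag as the main obstacle genuinely fails: by Lemma \ref{llof} an element of $\lof{I'}{X'}{Y'}$ is a tuple indexed by the indecomposables of $I'$, and subdividing $f:d\to c$ replaces the single coordinate $\varphi_{f}$ by \emph{two} coordinates $\varphi_{d\to d_{f}}$ and $\varphi_{d_{f}\to c}$ (both essentially elements of $\mf$, since with your Kan-extension choices $X'_{d_{f}}\cong X_{d}$ and $Y'_{d_{f}}\cong Y_{c}$ with identity structure maps), of which only the sum is constrained where $\varphi_{f}$ was. Hence $\lof{I'}{X'}{Y'}\to\lof{I}{X}{Y}$ is a surjection with nontrivial kernel, not an isomorphism: the $L$-construction is not invariant under subdividing arrows.

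For comparison, the paper does not reduce to the earlier cases but interleaves their arguments: given a nullhomotopic $\phidot\in\lof{J}{X}{Y}$, it constructs candidates $\nuof{\hdot,g,\fdot}$ for $Y(\comp{\fdot})\circ\sigma(g)\circ X(\comp{\hdot})$ attached to composable strings $d\xra{\hdot}c\xra{g}b\xra{\fdot}a$, subject to coherence \wref{eqnucohere} and consistency \wref{eqnuconsist}, by a double induction: first over the filtration $\{\cF_{i}\}$ (composition length from the left), exactly as in Proposition \ref{pfibl}, using Reedy cofibrancy of $X$ to extend along the cofibrations $\hX_{e}\to X_{e}$; then over $\{\cG_{j}\}$ (composition length from the right), as in Proposition \ref{pfibr}, which is precisely where strong fibrancy of $Y$ is used. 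So the special cases serve as templates for the two halves of one construction, not as lemmas through which the general restriction map formally factors; any repair of your proposal would have to carry out (or replicate) that double induction rather than change the indexing category.
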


\begin{proof}
Consider any composable sequence:
\begin{myeq}[\label{eqmaparoundc}]
d\xra{\hdot} c\xra{g} b\xra{\fdot} a
\end{myeq}
in $I$. %We want \w{\Phiof{I}{\fdot,g,\hdot}:X_{d}\to Y_{a}}
%to be the sum of the known map:
As above, \w{0 \sim \phidot \in \lof{J}{X}{Y}}
will determine the map
\begin{myeq}[\label{eqrhoof}]
\psiof{\hdot,g,\fdot}~:=~Y(\comp{(g,\fdot)})\circ\Phiof{I}{\hdot}
~+~\Phiof{I}{\fdot}\circ X(\comp{(\hdot,g)})
\end{myeq}
and we use \w[,]{\nuof{\hdot,g,\fdot}:X_{d}\to Y_{a}} to denote the candidate
for \w{Y(\comp{\fdot})\circ\sigof{g}\circ X(\comp{\hdot})} which we will construct.

As before we require \emph{coherence}:
\begin{myeq}[\label{eqnucohere}]
\nuof{\hdot \circ \ldot,g,\kdot \circ \fdot}~=~
Y(\comp{\kdot})\circ\nuof{\hdot,g,\fdot}\circ X(\comp{\ldot})
\end{myeq}
for any
\[
e\xra{\ldot} d\xra{\hdot} c\xra{g} b\xra{\fdot} a\xra{\kdot} z
\]
in $I$; and \emph{consistency}:
\begin{myeq}[\label{eqnuconsist}]
\nuof{\hdot',g',\fdot'}~=~\psiof{\hdot,g,\fdot}+\nuof{\hdot,g,\fdot}-
\psiof{\hdot',g',\fdot'}
\end{myeq}
whenever \w[.]{\comp{\hdot',g',\fdot'}=\comp{\hdot,g,\fdot}}
\\

We choose the maps $\nu$ satisfying \wref{eqnucohere} and 
\wref{eqnuconsist} by two successive inductions: 
\begin{enumerate}
\renewcommand{\labelenumi}{$\bullet$ \ }
\item The first is by induction on $i$, the filtration degree of $d$ in 
\w{\{\cF_{i}\}_{i=0}^{m}} (by composition length from the left): this
is done as in the proof of Proposition \ref{pfibl}, until finally we have 
\w{\nuof{h,g,\fdot}} for every \w[,]{d\xra{h} c\xra{g} b\xra{\fdot}a} 
where $h$ is indecomposable and $a$ is terminal in $I$ (by coherence
this extends back to any \w[).]{d\xra{\hdot} c} 
\item The second is by induction on $j$, the filtration degree of $a$ in 
\w{\{\cG_{j}\}_{j=0}^{n}} (by composition length from the right), as
in the proof of Proposition \ref{pfibr} (which is why we need $Y$ to be 
strongly fibrant).
\end{enumerate}

At the end of the two induction processes we have chosen
\w{\nuof{h,g}:X_{d}\to Y_{b}} for $h$ and $g$ indecomposable. We can
then choose \w{\sigof{h}=\nuof{h}:X_{d}\to Y_{c}} as in the last step
of the proof of Proposition \ref{pfibl}, and finally choose 
\w{\sigof{g}=\nuof{g}:X_{c}\to Y_{b}} as in the proof of Corollary \ref{cfibl}.
This completes the construction of a lift \w{\sigdot \in \lof{I}{X}{Y}} for 
\w{\phidot} as required.
\end{proof}

\begin{cor}\label{cfib}
Suppose \w{\tuple} is admissible, $Y$ is strongly fibrant, and $J$ is any full
subcategory of $I$ with the same weakly initial and final objects. 
Then the restriction map \w{p:\lof{I}{X}{Y}\to\lof{J}{X}{Y}} is a fibration
\end{cor}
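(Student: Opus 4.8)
The plan is to reduce Corollary \ref{cfib} to an iterated application of the three ``one-object-at-a-time'' fibration results already established, namely Corollary \ref{cfibl}, Proposition \ref{pfibr}, and Proposition \ref{pfibrest}. The key observation is that a full subcategory $J \subseteq I$ sharing the weakly initial and final objects of $I$ is obtained from $I$ by successively omitting the internal objects $c \in \Obj(I) \setminus \Obj(J)$ one at a time, so it suffices to treat the case where $J = I \setminus \{c\}$ for a single internal object $c$ and then compose the resulting fibrations (recall that a composite of fibrations is a fibration, and that the restriction maps compose correctly, since $p^I_J = p^{I'}_J \circ p^{I}_{I'}$ whenever $J \subseteq I' \subseteq I$).

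First I would fix an ordering of the finitely many objects to be removed. Since $I$ is locally finite (Definition \ref{dgoodi}), there are only finitely many such $c$, and one may remove them in any order in which each intermediate category $I_k$ is still \good\ --- this is arranged by removing objects in order of decreasing (or increasing) degree, which preserves connectedness as long as one does not disconnect $I$; since $J$ itself is assumed to be a legitimate \good\ indexing category (it shares the weakly initial/final sets and the model-theoretic hypotheses pass to full subcategories), such an ordering exists. At each stage we have an admissible tuple $(\cC, I_k, Z\rest{I_k}, X\rest{I_k}, Y\rest{I_k})$ with $Y\rest{I_k}$ still strongly fibrant (by Remark \ref{rstrongf}, strong fibrancy is preserved under restriction to full subcategories, being independent of the refinement of $\cG$).

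Second, for the single-object step, the point is that removing an internal object $c$ falls into one of the cases already covered: if all maps out of $c$ are indecomposable we invoke Corollary \ref{cfibl}; if all maps into $c$ are indecomposable we invoke Proposition \ref{pfibr}; and for a general internal object $c$ we invoke Proposition \ref{pfibrest}, which handles the omission of \emph{any} object at the cost of requiring $Y$ strongly fibrant. Since we are assuming $Y$ strongly fibrant throughout, the last of these suffices uniformly, so in fact the whole argument reduces to: write $\Obj(I) \setminus \Obj(J) = \{c_1, \dots, c_r\}$, set $I = I^{(0)} \supset I^{(1)} \supset \dots \supset I^{(r)} = J$ with $I^{(m)} = I^{(m-1)} \setminus \{c_m\}$, apply Proposition \ref{pfibrest} to each step to get that $p^{I^{(m-1)}}_{I^{(m)}}$ is a fibration, and compose.

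The main obstacle I anticipate is purely bookkeeping: one must check that each intermediate $I^{(m)}$ still satisfies all four conditions of Definition \ref{dgoodi} --- in particular that it remains connected and still has nonempty weakly initial and weakly final subcategories equal to those of $I$ --- so that Proposition \ref{pfibrest} applies at every stage. This is where the hypothesis ``$J$ has the same weakly initial and final objects as $I$'' does its work, and one should verify that removing internal objects one at a time, in an order respecting degree, never destroys connectedness before reaching $J$; because $J$ is connected and we only delete internal vertices, an order of deletion can always be chosen (e.g.\ peeling off internal objects of maximal degree first) that keeps every $I^{(m)}$ connected. Once this is in place, the conclusion is immediate from composing the $r$ fibrations $p^{I^{(m-1)}}_{I^{(m)}}$.
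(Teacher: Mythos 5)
Your proposal is correct and is essentially the paper's own argument: the paper proves the corollary precisely by induction on the number of objects in $I\setminus J$, applying Proposition \ref{pfibrest} to remove one internal object at a time and composing the resulting fibrations. Your extra bookkeeping (that each intermediate full subcategory keeps the same weakly initial/final objects and remains \good, and that strong fibrancy restricts) is a reasonable elaboration of the same route rather than a different approach.
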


\begin{proof}
By induction on the number of objects in \w[,]{I\setminus J}
using Proposition \ref{pfibrest}.
\end{proof}

%
%s7      Identifying the Fibers
%
\section{Identifying the Fibers}
\label{cif}

As we have just seen, if $I$ is a good indexing category,
under our standard assumptions on $Z$, $X$, and $Y$ the auxiliary tower 
\wref{eauxt} is a tower of fibrations of simplicial abelian groups. 
It remains to identify the fibers of the restriction maps 
\w[,]{p:\lof{I}{X}{Y}\to\lof{J}{X}{Y}} for a subcategory $J$ of $I$;
this will allow us to determine those of the primary tower
\wref{eqprimary} (or, more directly, those of the modified tower 
\wref[).]{eqvarp} We consider only the case when
\w{I\setminus J} consists of a single internal object $c$.  

\begin{lemma}\label{lfiber}
If \w{\tuple} is admissible and $Y$ is strongly fibrant, then
\w{\phidot\in\Ker(p)\subseteq\lof{I}{X}{Y}} if and only if
\begin{enumerate}
\renewcommand{\labelenumi}{\alph{enumi})\ }
\item \w{\phiof{f}=0} for each morphism $f$ of $I$ which does not
  begin or end in $c$.
\item for any \w{d \xra{g} c \xra{f} b} in $I$ with $f$ and $g$
  indecomposable:
\begin{myeq}[\label{eqkerp}]
Y(f)\circ\phiof{g}+\phiof{f}\circ X(g)=0~,
\end{myeq}
\end{enumerate}
\end{lemma}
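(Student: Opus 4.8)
The plan is to unwind the definition of the restriction map $p = p^I_J$ and apply Lemma \ref{ldescript}. Since $I\setminus J$ consists of the single internal object $c$, the map $p^I_J:\lof{I}{X}{Y}\to\lof{J}{X}{Y}$ is, by Notation \ref{nlof} (and the remark after Lemma \ref{ldescript}, since the relevant bottom maps are identities for indecomposables of $J$ that are also indecomposable in $I$), essentially a forgetful functor: it projects a tuple $\phidot\in\hlof{I}{X}{Y}\subseteq\prod_{f\in\indec{I}}\mf$ onto the factors indexed by $\indec{J}$. First I would observe that $\indec{J}=\indec{I}\setminus\{h:h\text{ begins or ends at }c\text{ and is indecomposable}\}$ — more precisely, an indecomposable morphism of $I$ lies in $J$ iff neither its source nor its target is $c$, and conversely a morphism of $J$ that is indecomposable \emph{in} $J$ but decomposes in $I$ as a zigzag through $c$ is, by Lemma \ref{ldescript} applied to $J\subseteq I$, determined on $\hlof{I}{X}{Y}$ by the $\Phis{}$-formula in the pieces through $c$. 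So $\Ker(p)$ consists exactly of those $\phidot\in\hlof{I}{X}{Y}$ whose components $\varphi_f$ vanish for all indecomposable $f$ not touching $c$, \emph{and} whose ``derived'' components $\varphi_{\comp{\fdot}}$ for $\fdot$ an indecomposable factorization lying entirely in $J$ also vanish.

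The heart of the argument is then to show that the vanishing of these derived components is equivalent, modulo condition (a), to the single relation (b). For the forward direction: if $\phidot\in\Ker(p)$, then for any $d\xra{g}c\xra{f}b$ with $f,g$ indecomposable, the composite $f\circ g$ lies in $J$ but its indecomposable factorization in $I$ is $(g,f)$. By Lemma \ref{ldescript} (equivalently formula \wref{eqlof}) the value of $\varphi_{f\circ g}$ as an element of $\lof{J}{X}{Y}$ is $\Phiof{2}{(g,f)}(\varphi_g,\varphi_f) = Y(f)\varphi_g X(\Id) + \Id\cdot\varphi_f\cdot X(g) = Y(f)\circ\varphi_g + \varphi_f\circ X(g)$, using that the only intermediate indecomposable-at-$c$ pieces are $g$ and $f$ themselves and that all \emph{other} indecomposables appearing (those not touching $c$) have $\varphi=0$ by (a); since $\phidot\in\Ker(p)$ this value is $0$, giving \wref{eqkerp}. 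One must also check that no \emph{longer} composites through $c$ produce additional constraints: a longer path $e\to\cdots\to c\to\cdots\to a$ lying in $J$ would have to re-enter or leave $c$, contradicting that $c$ is a single omitted object and the path is in $J$; and composites of the form (morphism not touching $c$)$\circ g$ or $f\circ$(morphism not touching $c$) just reproduce \wref{eqkerp} post- or pre-composed with the identities forced by (a). Conversely, given (a) and (b), one checks directly from the $\hlof{}{}{}$-description \wref{eqlof} that the resulting tuple, extended to decomposables by the recursive formula, actually lies in $\lof{I}{X}{Y}$ (the telescoping in \wref{eqphi} collapses every longer sum through $c$ to a single application of (b), and away from $c$ everything is $0$), and that $p(\phidot)=0$.

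The step I expect to be the main obstacle is the bookkeeping in the previous paragraph: verifying that \wref{eqkerp} (a \emph{local}, length-two condition at $c$) together with (a) really does capture \emph{all} of the limit constraints \wref{eqlof} that survive in $\Ker(p)$, i.e.\ that no hidden higher-order compatibility is lost. The key point making this work is that since $c$ is internal, any composable sequence in $I$ passes through $c$ at most once in a way relevant to $\Ker(p)$ — if it avoided $c$ entirely, \wref{eqlof} is automatically satisfied by (a) since all terms vanish; if it passes through $c$, the corresponding sum in \wref{eqlof} telescopes, by \wref{eqphi} and condition (a), to exactly the left side of \wref{eqkerp} (possibly flanked by $Y(-)$ and $X(-)$ of morphisms not at $c$, which changes nothing since \wref{eqkerp} holds identically in those flanking maps once it holds for the indecomposable $f,g$). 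Thus the only genuine relation is \wref{eqkerp}, and we are done. I would present this by first fixing notation for the indecomposables touching $c$, then doing the forward implication via Lemma \ref{ldescript}, then the reverse via a direct verification against \wref{eqlof}, citing Lemma \ref{llof} to pass freely between $\lof{I}{X}{Y}$ and $\hlof{I}{X}{Y}$.
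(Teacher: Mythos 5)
Your proposal is correct and takes essentially the same route as the paper, whose entire proof is the single remark that the lemma follows from Lemma \ref{ldescript}: you unwind exactly that, identifying $p^I_J$ as the projection onto the components indexed by morphisms of $J$, with the component at a morphism of $J$ that decomposes in $I$ through $c$ given by the $\Phi$-formula, so that vanishing of these components is precisely conditions (a) and (b). The minor wording slips (calling the factorization through $c$ a ``zigzag'', and implicitly treating $f\circ g$ as indecomposable in $J$) are harmless, since by Lemma \ref{llof} the derived component of $p(\phidot)$ at any morphism of $J$ is well defined and must vanish on $\Ker(p)$, which is all your argument uses.
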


\begin{proof}
This follows from Lemma \ref{ldescript}.
\end{proof}

\begin{remark}\label{rsign}
The lemma  implies that \w{(\phiof{f},-\phiof{g})} defines a map from 
\w{X(g)} to \w[.]{Y(f)}
Note also that if \w{\varphi_{f}} is an arrow over \w[,]{Z_{t(f)}} 
the same is true of its negative; the remainder of the diagram for a
map over \w{Z(f)} already commutes because $X$ and $Y$ are diagrams
over $Z$. Thus \w{(\phiof{f},-\phiof{g})} is a map of arrows 
over \w[.]{Z(f)}
\end{remark}

\begin{defn}\label{dlocoh}
If \w{\tuple} is admissible, we define the \emph{local cohomology} of
\w{X\in\CIZ} at an object \w[,]{c\in I} denoted by
\w[,]{\cH_{c}(X/Z,Y)} to be the total derived functors into simplicial
abelian groups of \w{\Map_{\phi_{c}}(\psi_{c},\rho_{c})} applied to
$X$, where 
\w[,]{\psi_{c}:\hocolim\limits_{d\in I/c}\,X_{d}\to X_{c}}
\w[,]{\rho_{c}:Y_{c}\to\holim\limits_{b\in c/I}\,Y_{b}} and
\w[,]{\phi_{c}:Z_{c}\to\holim\limits_{b\in c/I}\,Z_{b}} are the
structure maps. The $i$-th \emph{local cohomology group} of \w{X\in\CIZ} 
at $c$ is defined to be \w[.]{\cH^{i}_{c}(X/Z,Y):=\pi_{i}\cH_{c}(X/Z,Y)}
\end{defn}

\begin{remark}
In many cases, the local cohomology at $c$ can be identified
explicitly as the Andr\'{e}-Quillen cohomology of an appropriate
(small) diagram.
\end{remark}

\begin{prop}\label{pfiber}
If \w{\tuple} is admissible, $Y$ is strongly fibrant, and \w[,]{J=I\setminus\{c\}}
then \w{Ker(p)} is weakly equivalent (as a simplicial abelian group)
to \w[.]{\cH_{c}(X/Z,Y)} 
\end{prop}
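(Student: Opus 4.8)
The plan is to recognize $\Ker(p)$ as a \emph{strict} relative mapping space, and then to show that, under the admissibility hypotheses together with strong fibrancy of $Y$, this strict model already computes the total derived functor appearing in Definition~\ref{dlocoh}, so that no (co)fibrant replacement is needed and one gets $\Ker(p)$ on the nose (up to the canonical comparison with the homotopy version).

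First I would unwind Lemma~\ref{lfiber}. Write $\hX_{c}:=\colim_{d\in I/c}X_{d}$ for the latching object of $X$ at $c$, and $\hY_{c}:=\lim_{b\in c/I}Y_{b}$, $\hat{Z}_{c}:=\lim_{b\in c/I}Z_{b}$ for the matching objects of $Y$ and of $Z$; the relevant structure maps are then $\psi_{c}\colon\hX_{c}\to X_{c}$, $\rho_{c}\colon Y_{c}\to\hY_{c}$ and $\phi_{c}\colon Z_{c}\to\hat{Z}_{c}$. For $\phidot\in\Ker(p)$, part (a) of Lemma~\ref{lfiber} kills $\phiof{f}$ for every $f$ not touching $c$, so (using the recursion \wref{eqlof} via Lemma~\ref{ldescript} to handle composites) the components $\phiof{g}$ indexed by maps $g\colon d\to c$ define a cocone on $X\rest{I/c}$, hence a single map $a\colon\hX_{c}\to Y_{c}$ over $Z_{c}$; dually the components $\phiof{f}$ indexed by maps $f\colon c\to b$ define a cone, hence a map $b\colon X_{c}\to\hY_{c}$. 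Part (b), namely \wref{eqkerp} — equivalently the statement of Remark~\ref{rsign} — says precisely that $(a,b)$, up to the sign noted there, is a morphism of arrows from $\psi_{c}$ to $\rho_{c}$, compatibly over $\phi_{c}$. Since everything is natural in the simplicial direction, this produces an isomorphism of simplicial abelian groups
\[
\Ker(p)~\cong~\Map_{\phi_{c}}(\psi_{c},\rho_{c})~,
\]
in which the (co)limits defining $\psi_{c}$, $\rho_{c}$ and $\phi_{c}$ are the strict ones.

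Next I would argue that this strict relative mapping space models the homotopy one, which by Definition~\ref{dlocoh} computes $\cH_{c}(X/Z,Y)$. Here the hypotheses do the work. Since $X$ is cofibrant it is Reedy cofibrant (Remark~\ref{rmodel}), so $\psi_{c}$ is a cofibration with cofibrant source, and $X\rest{I/c}$ is Reedy cofibrant over the direct category $I/c$, whence $\hX_{c}=\colim_{d\in I/c}X_{d}$ also computes $\hocolim_{d\in I/c}X_{d}$. Since $Z$ is Reedy fibrant, $\phi_{c}$ is a fibration between fibrant objects and $\hat{Z}_{c}$ computes $\holim_{b\in c/I}Z_{b}$. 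Finally, because $Y$ is strongly fibrant (Definition~\ref{dstrongf}) — which, exactly as in the proof of Proposition~\ref{pfibr}, is the property dual to Reedy cofibrancy needed here — the matching map $\rho_{c}\colon Y_{c}\to\hY_{c}$ is a fibration between fibrant objects and $\hY_{c}$ computes $\holim_{b\in c/I}Y_{b}$. Thus $\psi_{c}$ is a cofibrant arrow, $\rho_{c}$ a fibrant arrow, and $\phi_{c}$ a fibrant base arrow, so $\Map_{\phi_{c}}(\psi_{c},\rho_{c})$ is homotopy invariant and coincides with the version built from the homotopy (co)limits; that is, with $\cH_{c}(X/Z,Y)$. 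Composing with the isomorphism of the previous paragraph gives the claimed weak equivalence.

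I expect the second step to be the main obstacle: one must check carefully, in the appropriate arrow model category relative to $\phi_{c}$, that $\psi_{c}$ is a cofibration with cofibrant source and that $\rho_{c}$ (and $\phi_{c}$) is a fibration with fibrant entries, so that the strict mapping space really does compute the total derived functor. The only ingredient here that is not formal bookkeeping is that strong fibrancy of $Y$ — and not mere (objectwise) fibrancy — is exactly what makes the matching map $Y_{c}\to\lim_{b\in c/I}Y_{b}$ a fibration between fibrant objects; this is the same point already used in the proofs of Propositions~\ref{pfibr} and \ref{pfibrest}, and it is why the hypothesis appears in the statement.
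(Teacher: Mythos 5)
Your proposal is correct and follows essentially the same route as the paper's own proof: identify $\Ker(p)$ with the strict mapping space $\Map_{\phi_{c}}(\psi_{c},\rho_{c})$ via Lemma \ref{lfiber} and the sign of Remark \ref{rsign}, and then observe that cofibrancy of $X$ (hence Reedy cofibrancy) and strong fibrancy of $Y$, together with Reedy fibrancy of $Z$, make the strict (co)limits compute the homotopy (co)limits, so this strict model already realizes the total derived functor $\cH_{c}(X/Z,Y)$ of Definition \ref{dlocoh}. Your write-up simply spells out in more detail the cocone/cone identification and the homotopy-invariance bookkeeping that the paper leaves implicit.
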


\begin{proof}
To obtain the total derived functors, in this case, we must replace
$X$ by a weakly equivalent cofibrant, hence Reedy cofibrant object,
which implies that \w{\hocolim\limits_{d\in I/c}\,X_{d}} is simply the
colimit, and \w{\psi_{c}} is a cofibration. By Remark \ref{rstrongf}, we
can replace $Y$ by a weakly equivalent strongly fibrant abelian group
object in \w[,]{\CIZ} which implies that \w{\holim\limits_{b\in c/I}\,Y_{b}}  
is the limit, and \w{\rho_{c}} is a fibration. With these choices, 
\w{\cH^{I}_{c}(X/Z,Y)} is simply the mapping space
\w[,]{\Map_{\phi_{c}}(\psi_{c},\rho_{c})} which is isomorphic to
\w{Ker(p)} in Lemma \ref{lfiber} (using the sign of Remark \ref{rsign}). 
\end{proof}

\begin{thm}\label{tthree}
If \w{\tuple} is admissible, for any ordering 
\w{(c_{i})_{i=1}^{\infty}} of the objects of $I$,
there is a natural first quadrant spectral sequence with: 
\[
E^{2}_{s,t}~=~\cH^{s+1}_{c_{t}}(X/Z;Y)~\Longrightarrow H^{s+t+1}(X/Z;\,Y)~,
\]
with \w[.]{d_{2}:E^2_{s,t} \to E^2_{s-2,t+1}} 
\end{thm}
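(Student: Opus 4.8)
The plan is to assemble Theorem~\ref{tthree} from the machinery developed in Sections \ref{calc}--\ref{cif}, exactly as the earlier truncation theorems were assembled from their towers of fibrations. First I would fix an ordering \w{(c_{i})_{i=1}^{\infty}} of the objects of \w{I} that refines the requirement that \w{I_{1}} contains all weakly initial and weakly final objects: that is, list the internal objects as \w[,]{c_{1},c_{2},\dots} and set \w{I_{k}=I\setminus\{c_{1},\dots,c_{k-1}\}} wait --- more precisely, build the primary tower \wref{eqprimt} by letting \w{I_{k}} be obtained from \w{I_{k+1}} by deleting the single internal object \w[.]{c_{k}} (The indexing in the statement runs over all objects, but since deleting a weakly initial or final object does nothing new, only the internal objects contribute non-trivial stages; the others give identity maps in the tower, so the corresponding columns vanish.) One subtlety to flag at the outset: the theorem as stated allows \w{I} to be countably infinite, whereas the constructions of \S\ref{spt} were phrased for finite \w[,]{I} so I would either invoke the finiteness of a chosen cover (as promised in the summary following Theorem C) or note that local finiteness plus the filtration \w{\{\cF_{i}\}} lets the inductive arguments of Propositions \ref{ppsifib}--\ref{pfibrest} go through levelwise.

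Next I would carry out the following steps in order. \textbf{(1)} Replace \w{Z} by a Reedy fibrant diagram, \w{X} by a cofibrant (hence Reedy cofibrant) object of \w[,]{\CIZ} and \w{Y} by a strongly fibrant abelian group object, using Proposition \ref{ptwo} and Remark \ref{rstrongf}; this makes \w{\tuple} admissible with \w{Y} strongly fibrant, which is the hypothesis of all the fibration results. \textbf{(2)} Invoke Corollary \ref{cfib} (equivalently, Proposition \ref{pfibrest} applied one object at a time) to conclude that the auxiliary tower \wref{eauxt} is a tower of fibrations of simplicial abelian groups, with homotopy limit \w{\lof{I}{X}{Y}} and --- via the fibration sequence \wref{eqpsi} of Proposition \ref{ppsifib} --- connected to \w{\Map_{Z}(X,Y)} through \w[.]{\trans{Z}{X}{Y}} \textbf{(3)} Pass to the \emph{modified} primary tower \wref{eqvarp} of \S\ref{smpt}, whose homotopy limit is \w{\ctrans{Z}{I}{X}{Y}=\Map_{Z}(X,Y)} and whose successive homotopy fibers are, by the argument of \S\ref{sauxfib}--\S\ref{smpt}, loop spaces of the fibers \w{F_{k}=\cH^{I_{k}}_{c_{k}}(X/Z,Y)} of the auxiliary maps \w[.]{p_{k}} \textbf{(4)} Identify each such fiber, via Proposition \ref{pfiber}, with the local cohomology simplicial abelian group \w[.]{\cH_{c_{k}}(X/Z,Y)} \textbf{(5)} Feed the tower \wref{eqvarp} into the standard homotopy spectral sequence of a tower of fibrations of (connected, since abelian group) simplicial sets (\cite[VII, \S 6]{GJarS}), whose \w{E^{2}}-term is \w{\pi_{s+t}} of the \w{t}-th homotopy fiber and which converges to \w[.]{\pi_{s+t}\Map_{Z}(X,Y)=H^{s+t}(X/Z;Y)} Tracking the single degree shift coming from the loop space in step (3) --- the fiber of \w{r_{k}} is \w{\Omega F_{k}} rather than \w{F_{k}} --- yields \w[,]{E^{2}_{s,t}=\cH^{s+1}_{c_{t}}(X/Z;Y)\Rightarrow H^{s+t+1}(X/Z;Y)} with the usual \w{d_{2}:E^{2}_{s,t}\to E^{2}_{s-2,t+1}} of such a homotopy spectral sequence. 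Naturality in \w{(X,Y)} is inherited from the naturality of every construction used.

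\textbf{The main obstacle} I anticipate is \emph{bookkeeping the indexing and the degree shift}, not any deep new idea: one must check carefully that passing from the primary tower \wref{eqprimary} to the modified tower \wref{eqvarp} really does replace $F_{k}$ by $\Omega F_{k}$ (this is implicit in \S\ref{smpt} but deserves an explicit sentence), and that the homotopy spectral sequence of a tower indexed by \emph{decreasing} subcategories \w{I\supseteq\dots\supseteq I_{k}\supseteq I_{k-1}} produces a \emph{first-quadrant} spectral sequence in the stated \w{(s,t)} coordinates. The match with the "\w{+1}" in both the \w{E^{2}}-term and the abutment, and the agreement of the differential \w{d_{2}} with the one in Theorems \ref{tone} and \ref{ttwo}, both come out of this bookkeeping. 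A secondary point worth a remark is convergence: as with the other two spectral sequences, convergence is automatic when the chosen cover (here, the finite sequence \wref{eqprimt}) is finite --- equivalently when \w{I} is finite --- and otherwise requires cohomological connectivity hypotheses on the local pieces; I would state this explicitly immediately after the proof, paralleling the remark following Theorem \ref{tone}.
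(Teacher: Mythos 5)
Your proposal matches the paper's proof: replace $Y$ by a strongly fibrant abelian group object (Remark \ref{rstrongf}), use Corollary \ref{cfib} to make the modified tower \wref{eqvarp} a tower of fibrations with homotopy limit \w[,]{\Map_{Z}(X,Y)} identify its fibers with the local cohomology objects via Proposition \ref{pfiber}, and take the associated homotopy spectral sequence with the indexing shift. Your additional remarks on orderings involving weakly initial/final objects, infinite $I$, and convergence go beyond what the paper's (very brief) proof addresses, but they do not change the argument, which is essentially identical.
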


\begin{proof}
We may replace $Y$ by a weakly equivalent strongly fibrant abelian
group object, by Remark \ref{rstrongf}. By Corollary \ref{cfib},
\wref{eqvarp} is then a tower of fibrations, so it has an 
associated homotopy spectral sequence.  To identify the \ $E^{2}$-term,
note that the homotopy groups of the homotopy fibers of the tower are 
the local cohomology groups in Proposition \ref{pfiber}, suitably
indexed (see Remark \ref{rindex}).
\end{proof}

\begin{remark}\label{rtthree}
Note that \w{p^{I}_{J}:\lof{I}{X}{Y}\to\lof{J}{X}{Y}} is a fibration for 
any full subcategory \w{J\subseteq I} with the same weakly initial and
final objects (Corollary \ref{cfib}), and we
can similarly describe the fiber of \w{p^{I}_{J}} as a sort of 
local cohomology \w[,]{\cH^{I}_{J}(X/Z,Y)} and thus identify the \ 
$E^{2}$-term of the spectral sequence obtained from a fairly arbitrary
cover of $I$.

We shall not attempt to define \w{\cH^{I}_{J}(X/Z,Y)} in
general. Observe, however, that if $J$ is discrete (i.e., there are
no non-identity maps between its objects \w[),]{c_{1},\dotsc, c_{n}} then 
\begin{myeq}[\label{eqhprod}]
\cH^{I}_{J}(X/Z,Y)~\cong~\prod_{i=1}^{n}\ \cH_{c_{i}}(X/Z,Y) .
\end{myeq}
\end{remark}

\begin{example}\label{egsquartwo}
For the commuting square of Example \ref{egsquare}, we now get
a cover for $I$ consisting of \w[,]{I_{3}=I} 
\w{I_{2}=I\setminus\{3\}} \ -- \ i.e., a commuting triangle:
\[
\xymatrix{
4 \ar[d]_{c} \ar[rd]^{b\circ d}  \\
2 \ar[r]_{a} & d
}
\]
\w[,]{I_{1}~=~\{4\xra{a \circ c} 1\}}  and \w[.]{I_0=\{ 4\}}  

Given a diagram of abelian group objects \w[,]{Y:I\to\cC} the local
cohomology groups which form the \ $E^{2}$-term
of the spectral sequence of Theorem \ref{tthree} are:
\[
E^{2}_{s,t}~\cong~\begin{cases} 
H^{s+3}(X(d);\,Y(b))               & \text{if} \ t=2;\\
H^{s+2}(X(c);\,Y(a))               & \text{if} \ t=1;\\
H^{s+1}(X_{4};\,Y_{1})              & \text{if} \ t=0;\\
0 & \text{otherwise.}
\end{cases}
\]

Once more we could unite the first and second rows by omitting 
\w{I_{2}} from our cover, as in Example \ref{egsquare}, by \wref[.]{eqhprod}
\end{example}

\begin{mysubsection}{A comparison}
\label{scompar}
In the simplest case, when \w{I=[\bo]} (a single map):
$$
\xymatrix@R=25pt{
& X_{2} \ar[d]^{f_{2}} \ar[ldd]_{p_{2}} \ar[r]^{X\phi}  &
  X_{1} \ar[d]_{f_{1}} \ar[rdd]^{p_{1}} & \\
& Y_{2} \ar[ld]^<<<<{q_{2}} \ar[r]_{Y\phi}  &
  Y_{1} \ar[rd]_<<<<{q_{1}} & \\
  Z_{2} \ar[rrr]^{Z\phi}  & & &  Z_{1}~,
}
$$
we have the ``defining fibration sequence'':
\begin{myeq}[\label{eqdefin}]
\map(X,Y)~\to~
\map(X_{2},Y_{2})\times\map(X_{1},Y_{1})\xra{\xi}~\map(X_{2},Y_{1})
\end{myeq}
\noindent of \cite[Prop.\ 4.20]{BJTurR} (where all mapping spaces are
taken in the appropriate comma categories).

Projecting the total space of \wref{eqdefin} onto the second
factor yields the following interlocking diagram of horizontal and
vertical fibration sequences:
%
%(6)
\mydiagram[\label{esix}]{
\map(X_{2},\Fib(Y\phi))\ar[d]_{i_{\ast}} \ar[r] &
\map(X,Y) \ar[r] \ar[d] & \map(X_{1},Y_{1}) \ar[d]^{\Id}\\
\map(X_{2},Y_{2}) \ar[d]_{\phi_{\ast}}\ar[r] &
\map(X_{2},Y_{2})\times\map(X_{1},Y_{1})\ar[r]^<<<<<{\pi} \ar[d]^{\xi} &
\map(X_{1},Y_{1}) \ar[d]\\
\map(X_{2},Y_{1}) \ar[r]^{\Id} & \map(X_{2},Y_{1})\ar[r]& \ast
}

We see that the spectral sequence of Theorem \ref{tone} reduces to the long
exact sequence in homotopy for the top horizontal fibration sequence
in \wref{esix}, while the long exact sequence of Fact \ref{frelc}
is obtained from the left vertical fibration sequence in \wref[.]{esix}
\end{mysubsection}

\begin{remark}\label{rinter}
This actually works for any linear order \w{I=[\bn]} (\S \ref{emainex}):

Given \w[,]{X,Y\in\CIZ} if we set \w{I':=[\bnm]} (so
\w[)]{J:=\{n~\xra{\phi_{n}}~n-1\}} and let
\w[,]{\tau=\tau^{I}_{I'}:\CIZ\to\CInZ{I'}\rest{I'}} then \wref{eqdefin} 
yields a fibration sequence:
$$
\map(X,Y)~\to~
\map(X_{n},Y_{n})\times\map(\tau X,\tau Y)\xra{\xi}~\map(X_{n},Y_{n-1})
$$
which again induces a interlocking diagram of fibrations:
$$
\xymatrix@R=25pt{
\map(X_{n},\Fib(Y\phi_{n}))\ar[d]_{i_{\ast}} \ar[r] &
\map(X,Y) \ar[r] \ar[d] & \map(\tau X,\tau Y) \ar[d]^{\Id}\\
\map(X_{n},Y_{n}) \ar[d]_{(\phi_{n})_{\ast}}\ar[r] &
\map(X_{n},Y_{n})\times\map(\tau X,\tau Y)\ar[r]^<<<<<{\pi} \ar[d]^{\xi} &
\map(\tau X,\tau Y) \ar[d]\\
\map(X_{n},Y_{n-1}) \ar[r]^{\Id} & \map(X_{n},Y_{n-1})\ar[r]& \ast
}
$$
\noindent as in \wref[.]{esix} Note that the long exact sequences in
homotopy (i.e., cohomology) of the central vertical fibrations (for
various values of $n$) provide an alternative inductive approach to
calculating the cohomology of $X$, which can again be formalized in a
spectral sequence (though in this case the fibers are the unknown 
quantity).
\end{remark}

%
%                  References
%

%
\end{document}